\documentclass[11pt]{amsart}

\usepackage{palatino}
\usepackage{amsfonts}
\usepackage{amssymb}
\usepackage{amscd}
\usepackage{xypic}
\usepackage{graphicx}
\usepackage{float}
\usepackage{amsmath}

\usepackage{amsmath}
\usepackage{mathrsfs}

\usepackage{epstopdf}

\newtheorem{introtheorem}{Theorem}

\newtheorem{theorem}{Theorem}[section]
\newtheorem{proposition}[theorem]{Proposition}

\newtheorem{lemma}[theorem]{Lemma}
\theoremstyle{definition}
\newtheorem{definition}[theorem]{Definition}
\newtheorem{remark}[theorem]{Remark}

\newtheorem{conjecture/question}[theorem]{Conjecture/Question}
\newcommand{\dblq}{{/\!/}}

\newtheorem{remark/definition}[theorem]{Remark/Definition}
\newtheorem{terminology/notation}[theorem]{Terminology/Notation}

\def\PPP{{\textbf P}}

\def\GG{{\textbf G}}

\def\PP{{\textbf P}}
\def\kk{{\textbf k}}
\def\OO{\mathcal{O}}

\def\cB{\mathcal{B}}
\def\cA{\mathcal{A}}
\def\F{\mathcal{F}}

\def\E{\mathcal{E}}
\def\G{\mathcal{G}}
\def\cS{\mathcal{S}}

\def\L{\mathcal{L}}
\def\I{\mathcal{I}}

\def\cM{\mathcal{M}}

\def\cR{\mathcal{R}}
\def\rr{\overline{\mathcal{R}}}
\def\cZ{\mathcal{Z}}
\def\cU{\mathcal{U}}
\def\cQ{\mathcal{Q}}
\def\cC{\mathcal{C}}
\def\H{\mathcal{H}}
\def\HH{\mathfrak{H}}
\def\Pic0{{\rm Pic}^0(X)}

\def\mm{\overline{\mathcal{M}}}
\def\hh{\overline{\mathcal{H}}}

\def\dd2{\overline{\mathfrak{D}}_{\mathrm{rk}2}}

\begin{document}
\title{The moduli space of curves of genus $16$ is uniruled}
\author[G. Farkas]{Gavril Farkas}

\address{Humboldt-Universit\"at zu Berlin, Institut f\"ur Mathematik,  Unter den Linden 6
\hfill \newline\texttt{}
 \indent 10099 Berlin, Germany} \email{{\tt farkas@math.hu-berlin.de}}

\author[A. Verra]{Alessandro Verra}
\address{Universit\`a Roma Tre, Dipartimento di Matematica e Fisica\newline \indent  Largo San Leonardo Murialdo 
  1-00146 Roma, Italy}
 \email{{\tt alessandro.verra@mat.uniroma3.it}}

\begin{abstract}
Using in an essential way the geometry of  a non-abelian Brill--Noether divisor, we establish that $\mm_{16}$ is unruled. This is the highest genus for which this is known. 
\end{abstract}
\maketitle

\vskip 5pt

\section{Introduction}

The problem of determining the global nature of the moduli space $\mm_g$ of stable curves of genus $g$ has long been one of the key questions in the field, motivating important developments in moduli theory like the theories of admissible covers, or limit linear series. Severi \cite{Sev}  proved that $\mm_g$ is unirational for $g\leq 10$, see \cite{AC} for a modern presentation. Much later, in the celebrated series of papers \cite{HM}, \cite{H}, \cite{EH}, Harris, Mumford and Eisenbud showed that $\mm_g$ is of general type for $g\geq 24$. Recently, it has been showed in \cite{FJP} using tropical methods that both moduli spaces $\mm_{22}$ and $\mm_{23}$ are of general type. On the other hand, due to work of Sernesi \cite{Ser}, Chang-Ran \cite{CR1}, \cite{CR2} and Verra \cite{Ve} it is known that $\mm_g$ is unirational also for $11\leq g\leq 14$. Computer assisted implementations over finite fields of all these unirational constructions can be found in \cite{Sch1}. Bruno and Verra \cite{BV} proved that $\mm_{15}$ is rationally connected. An alternative proof, via matrix factorization of cubics,  of the weaker result that $\mm_{15}$ is uniruled has been found by Schreyer \cite{Sch2}. We finally mention Savin's paper \cite{Sa} on whether in positive characteristic $\mm_{g,n}$ may simultaneously be uniruled and of general type for large $g$. Our main result is the following:

\begin{introtheorem}\label{main_thm}
The moduli space $\mm_{16}$  is uniruled.
\end{introtheorem}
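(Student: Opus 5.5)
Since $\mm_{16}$ is projective with at worst quotient singularities, by Boucksom--Demailly--P\u{a}un--Peternell uniruledness is equivalent to $K_{\mm_{16}}$ not being pseudoeffective. So we will exhibit a covering family of curves $\{B_t\}$ whose intersection with $K_{\mm_{16}}$ is negative. By Harris--Mumford,
\[
K_{\mm_{16}} = 13\lambda - 2\delta_0 - 3\delta_1 - 2\sum_{i\geq 2}\delta_i ,
\]
so it suffices to find a family sweeping out $\mm_{16}$ along which the slope $(\delta\cdot B)/(\lambda\cdot B)$ exceeds $13/2$, with $\delta_1$ counted appropriately. Our approach is to write $K_{\mm_{16}}$ as a combination of an effective divisor $D$ and a class that is negative on a suitable covering family, and to choose that family inside $D$.

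The main tool is the non-abelian Brill--Noether divisor announced in the abstract. For $g=16$, take $\dd2$ to be the locus of curves $C$ carrying a stable rank $2$ vector bundle $E$ with canonical determinant and $h^0(E)$ larger than expected; for example $h^0(E)\geq 7$ when $\det E=K_C$ and $\deg E=30$ (Mukai-type bundles). The numbers should be chosen so that the expected codimension is one. We then compute its class $[\dd2]=a\lambda-b_0\delta_0-\cdots$ by degenerating and pushing forward via a determinantal or Grassmannian-bundle presentation, and we expect a slope smaller than $6+12/(g+1)$ for $g=16$. The next step is to produce a covering family of $\dd2$ itself, meaning curves through a general point of $\dd2$, using the geometry of $E$. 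The embedding $C\to G(2,7)$ given by $E$, or curves on K3 or other surfaces lying in a suitable linear section of a Grassmannian, lets us move $C$ in a pencil on a surface $S$. The pencil gives a curve $B\subset\dd2$ with $B\cdot\lambda=\chi(\OO_S)+g-1$ and $B\cdot\delta_0=e(S)+4g-4$, with $B\cdot\delta_i=0$ for $i\geq 1$. We then check that $B\cdot\dd2<0$, so that the family is a covering family of $\dd2$ with negative normal intersection, together with $B\cdot K_{\mm_{16}}<0$.

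Finally we combine the pieces. Any effective divisor representing a multiple of $K$ would have to contain $\dd2$ as a component, because $B$ covers $\dd2$ and $B\cdot K<0$. Subtracting the maximal multiple $c\,\dd2$ leaves a residual effective class $K-c\,\dd2$, and we need this residual to also be negative on a covering family of $\mm_{16}$. For that covering family we take a family of curves on K3 surfaces, or of nodal curves of genus $16$ coming from lower genus, whose slope is known. Equivalently, we show that $K$ lies outside the cone spanned by $\dd2$ and the classes that are nonnegative on the covering curves. This forces $K$ to be non-effective even up to limits, so $K$ is not pseudoeffective and $\mm_{16}$ is uniruled.

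The hardest steps will be, first, computing the class of $\dd2$ exactly, including the boundary coefficients, which requires degenerating rank-$2$ bundles. Second, and especially, constructing the surfaces that sweep out $\dd2$ and verifying that their pencils meet $\dd2$ negatively. I would try first to realize a general $[C,E]\in\dd2$ as a curve on a suitable linear section of $G(2,7)$ or of a Mukai variety, and then compute $e(S)$ and $\chi(\OO_S)$ directly.
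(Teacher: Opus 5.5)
There is a genuine gap, in two places.

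\emph{The divisor.} The divisor you describe does not exist. With canonical determinant, $\beta(2,16,k)=45-\binom{k+1}{2}$ equals $17,9,0,-10$ for $k=7,8,9,10$. So no condition $h^0(E)\geq k$ on $E\in SU_C(2,\omega)$ has expected codimension one; indeed a general $C$ already carries $11$ stable bundles with $h^0(E)=9$.

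Your surface construction fails for the same numerical reason. If $\det E=\omega_C$ and $h^0(E)=7$, the Pl\"ucker span of $C$ is the canonical space $\PP^{15}$ (for surjective determinant map). Then $\PP^{15}\cap G(2,7)$ is a fivefold, and $C$ does not move in a pencil on a surface.

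The paper's divisor twists the determinant: $\det F=\omega_C(-x-y-z)$ with $h^0(F)=7$. Such an $F$ is obtained from one of the $11$ bundles in $SU_C(2,\omega,9)$ by an elementary transformation along a trisecant line of the scroll $\PP_E$. Then $\langle C\rangle=\PP^{12}$, $X=\PP^{12}\cap G(2,7)$ is a canonical surface with $C^2=3$, and the pencil $|\OO_X(C)|$ gives $\Gamma_2$ with $\Gamma_2\cdot K_{\mm_{16}}=13\cdot 29-2\cdot 189=-1$. Even so, proving that this locus has a divisorial component whose general member yields a smooth $X$ is most of the work. It rests on the count $|SU_C(2,\omega,9)|=11$, linkage with genus-$4$ curves in $G(2,7)$, and explicit checks over $\mathbb F_{17}$.

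\emph{The closing step.} Your final step cannot work as stated, because neither family you name covers $\mm_{16}$. Genus-$16$ curves on K3 surfaces fill only a $35$-dimensional locus, and nodal curves from lower genus lie in $\Delta_0$. What is needed is a curve $\Gamma_1$ that is nonnegative on \emph{every} effective divisor and has $\Gamma_1\cdot K_{\mm_{16}}\leq 0$; a known slope is not enough.

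The paper takes $\Gamma_1$ sweeping $\Delta_0$ with $\Gamma_1\cdot\Delta_0=143>0$ and $\Gamma_1\cdot K_{\mm_{16}}=13\cdot 22-2\cdot 143=0$ exactly. It is built from the Bruno--Verra canonical surfaces $S=(2,2,2,2)\subseteq\PP^6$ containing $2$-pointed genus-$15$ curves, which requires ampleness of $\OO_S(C)$ and a nodality analysis of the pencils.

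Writing $K_{\mm_{16}}+\epsilon A=c_\epsilon\overline{\mathfrak{D}}_{\mathrm{rk}2}+D_\epsilon$, the curve $\Gamma_2$ keeps $c_\epsilon$ bounded away from $0$. The curve $\Gamma_1$ then only forces $\Gamma_1\cdot\overline{\mathfrak{D}}_{\mathrm{rk}2}=0$, i.e.\ slope exactly $\frac{13}{2}$, and this case must be excluded separately. The paper does so without computing $[\overline{\mathfrak{D}}_{\mathrm{rk}2}]$. It uses the bound $\Gamma_2\cdot\overline{\mathfrak{D}}_{\mathrm{rk}2}\geq -1$, obtained from the normal bundle of $\Gamma_2$, together with a degeneration showing that $\Gamma_1$ meets the divisor.

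Your plan to compute the full class of the divisor, boundary coefficients included, is unsupported and not needed. Likewise, you cannot \emph{check} $B\cdot\overline{\mathfrak{D}}_{\mathrm{rk}2}<0$ directly without that class. In the paper this negativity is a consequence of the contradiction hypothesis.
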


Note that $16$ is the highest genus for which it is currently known that $\mm_{16}$ is uniruled. We refer to Tseng's note \cite{Ts} for further details on the convoluted history of determining the Kodaira dimension of $\mm_{16}$ and to \cite{AB} for further developments prior to this paper. It is also worth noting that it is proved in \cite{F} that the generically finite cover $\hh_{16,9}$ of $\mm_{16}$ parametrizing admissible covers of (minimal) degree $9$ has non-negative Kodaira dimension. The main novelty in this paper is the decisive use of non-abelian Brill-Noether theory to questions on the birational geometry of $\mm_g$. 

\vskip 4pt

The first such divisor is the closure $\Delta_0$ of the locus in $\mm_{16}$ of irreducible $1$-nodal curves of arithmetic genus $16$. We rely on the  \emph{uniruled parametrization} of $\mm_{15}$ found by Bruno and Verra \cite{BV}. Their work establishes that through a general point of $\mm_{15}$ there passes not only a rational curve, but in fact a \emph{rational surface}. This extra degree of freedom yields a uniruled parametrization of $\mm_{15,2}$. Since one has a degree $2$ map $\mm_{15,2}\rightarrow \Delta_0$, this induces a uniruled parametrization of  $\Delta_0$. We show the following:

\begin{introtheorem}\label{sweep}
The boundary divisor $\Delta_0$ of $\mm_{16}$ is uniruled and swept by a family of rational curves, whose general member $\Gamma\subseteq \Delta_0$ satisfies
$\Gamma \cdot K_{\mm_{16}}=0$ and $\Gamma\cdot \Delta_0>0$.
\end{introtheorem}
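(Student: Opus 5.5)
The plan is to build the rational curves explicitly from the Bruno--Verra parametrization of $\mm_{15}$ \cite{BV}, lift them to $\mm_{15,2}$, and push them to $\Delta_0$ through the gluing map $\chi\colon \mm_{15,2}\to\Delta_0$, $[C,x,y]\mapsto [C/x\sim y]$. The Bruno--Verra construction gives, through a general $[C]\in\mm_{15}$, a rational surface; concretely, $C$ moves in a linear system $|L|$ on a smooth rational surface $S$ with $h^0(S,L)\geq 3$. For a general pair $x,y\in C$, I would take the pencil $P\subseteq |L|$ of curves through $x$ and $y$ that contains $C$. Blowing up the $L^2$ base points gives a fibration $\widetilde S\to \PP^1$ of genus $15$ curves. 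The two exceptional curves $E_x,E_y$ over $x$ and $y$ are sections, so the pencil defines a rational curve $R\subseteq\mm_{15,2}$ through $[C,x,y]$. Its image $\Gamma:=\chi(R)$ passes through a general point of $\Delta_0$. This shows $\Delta_0$ is uniruled; the real work is computing intersection numbers.

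First I compute the standard invariants of $R$ from $\widetilde S$. Since $S$ is rational, $\chi(\OO_S)=1$, so
\[
R\cdot\lambda=\chi(\OO_{\widetilde S})+g-1=15,\qquad R\cdot\delta=e(\widetilde S)+4g-4=e(S)+L^2+56 .
\]
For general choices all singular fibres should be irreducible and $1$-nodal, giving $R\cdot\delta_i=0$ for $i\geq 1$. For the marked sections, $\psi_x\cdot R=-E_x^2=1$ and likewise $\psi_y\cdot R=1$.

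Next I push forward along $\chi$ using $\chi^*\lambda=\lambda$, $\chi^*\delta_i=\delta_i$ for $i\geq1$, and
\[
\chi^*\delta_0=\delta_0-\psi_x-\psi_y+(\text{boundary classes of }\mm_{15,2}),
\]
where the boundary terms vanish on $R$. This gives
\[
\Gamma\cdot\delta_0=R\cdot\delta_0-2,\qquad \Gamma\cdot\lambda=15 .
\]
Using $K_{\mm_{16}}=13\lambda-2\delta_0-3\delta_1-2\sum_{i\geq2}\delta_i$, the condition $\Gamma\cdot K_{\mm_{16}}=0$ becomes $2(R\cdot\delta_0-2)=195$. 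The parity problem here signals that the naive pencil is not the right curve. I would therefore refine the construction, for example by using the extra dimension of the Bruno--Verra rational surface to choose pencils with prescribed $L^2$ and $e(S)$, or by letting $y$ move along a second rational direction so that $\psi_y\cdot R$ changes. I would then tune the numerical data $(L^2,e(S),\psi\text{-degrees})$ until the slope $\Gamma\cdot\delta/\Gamma\cdot\lambda$ is exactly $13/2$.

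Once the slope $13/2$ is achieved, the property $\Gamma\cdot\Delta_0=R\cdot\delta_0-\psi_x\cdot R-\psi_y\cdot R>0$ should be automatic, because the number of nodal fibres of a pencil on a rational surface far exceeds the $\psi$-corrections.

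I expect the main obstacle to be identifying the correct one-parameter family inside the Bruno--Verra rational surface whose numerical invariants hit the slope $13/2$ exactly. The genericity claims behind the computation are a secondary concern, namely that the singular fibres are irreducible $1$-nodal curves and that the family really passes through a general point of $\mm_{15,2}$. I would settle the genericity by a dimension count on the incidence variety of triples (surface, curve, pair of points), and settle the numerical data by running the Bruno--Verra construction with its explicit invariants $e(S)$ and $L^2$.
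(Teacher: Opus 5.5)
Your framework matches the paper's: a pencil through $x,y$, blown up so that $E_x,E_y$ become sections with $R\cdot\psi_x=R\cdot\psi_y=1$, Tan's formulas for $\lambda$ and $\delta_0$, and $\Gamma\cdot\delta_0=R\cdot\delta_0-R\cdot\psi_x-R\cdot\psi_y$. But you put it on the wrong surface, and this is fatal rather than cosmetic. The Bruno--Verra surface containing $C$ is not rational. The ``rational surface'' through a general point of $\mm_{15}$ is the image in moduli of the net $|\OO_S(C)|\cong\PP^2$.

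Your own parity computation already shows that no tuning can rescue the rational model. For any pencil of genus $15$ curves on a surface with $\chi(\OO_S)=1$ one has $\Gamma\cdot\lambda=15$. So as long as the singular fibres are irreducible (as you assume), $\Gamma\cdot K_{\mm_{16}}=195-2\,\Gamma\cdot\delta_0$ is odd, whatever the values of $L^2$, $e(S)$ or the $\psi$-degrees. Since $\Gamma\cdot\delta_0$ enters with coefficient $2$, you would have to change $\Gamma\cdot\lambda$, that is, $\chi(\OO_S)$. Your proposal has no mechanism for that, so the step ``tune until the slope is $13/2$'' cannot be carried out.

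The missing idea is to embed $C\subseteq\PP^6$ by $L=\omega_C\otimes A^{\vee}\in W^6_{19}(C)$. Then $C$ lies on exactly four quadrics, whose base locus is a smooth \emph{canonical} surface $S$ with $\omega_S\cong\OO_S(1)$, $q(S)=0$, $p_g(S)=7$ and $K_S^2=\deg S=16$. Adjunction gives $\OO_C(C)\cong A$, so $C^2=9$ and $h^0(S,\OO_S(C))=3$.

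Consequently two general points of $C$ impose independent conditions on $|\OO_S(C)|$, so your pencil through a general pair $x,y$ does not exist. You must use the one-dimensional freedom of $A\in W^1_9(C)$: for general $[C,x,y]$, pick one of the finitely many $A$ with $h^0(C,A(-x-y))\neq 0$. Then $|\I_{\{x,y\}}(C)|$ is a pencil whose $9$ base points are the fibre of $A$ through $x,y$.

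With this surface the numbers come out right:
\begin{itemize}
\item $\chi(\OO_{\widetilde S})=8$, so $\Gamma\cdot\lambda=8+14=22$;
\item $K_{\widetilde S}^2=16-9=7$, so $c_2(\widetilde S)=96-7=89$;
\item $R\cdot\delta_0=89+56=145$, so $\Gamma\cdot\delta_0=143$.
\end{itemize}
Hence $13\cdot 22-2\cdot 143=0$ and $\Gamma\cdot\Delta_0=143>0$.

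Finally, the irreducibility and nodality of all members of the pencil is not a routine genericity remark. It is needed both for $\Gamma\cdot\delta_i=0$ with $i\geq 1$ and for Tan's formula. The paper first proves that $\OO_S(C)$ is ample, by degenerating to a double cover of a K3 surface. This gives $2$-connectedness of all members and finiteness of $S\to\PP^2$. It then excludes triple points and cusps by parameter counts.
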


\vskip 3pt

The curve $\Gamma$ is constructed by realizing a sufficiently general $2$-pointed curve $[C, x, y]\in \cM_{15,2}$ as a section on a $(2,2,2,2)$-complete intersection in $\PP^6$, which is induced by a linear system $L\in W^6_{19}(C)$, see also (\ref{cansurf}).  It turns out, this is a canonical surface $S\subseteq \PP^6$ on which the curve $C$ moves in a pencil, while keeping the marked points $x$ and $y$ fixed, that is, $\mbox{dim } \bigl|\I_{\{x,y\}/S}(C)\bigr|=1$. The curve $\Gamma\subseteq \mm_{16}$ is then induced by this pencil, see also (\ref{sweep2}) and its numerical characters are described in Proposition \ref{numbers}.

\vskip 4pt

\subsection{The non-abelian Brill-Noether divisor on $\mm_{16}$} The most important aspect of the paper lies in the choice of the second divisor. Via non-abelian Brill-Noether theory, we construct a uniruled divisor in $\mm_{16}$ swept by $K_{\mm_{16}}$-negative curves. For a smooth curve $C$ of genus $g$, we denote by $SU_C(2,\omega)$ the moduli space of equivalence classes of semistable rank $2$ vector bundles $E$ on $C$ with $\mbox{det}(E)\cong \omega_C$.  For $k\geq 0$, the non-abelian Brill-Noether variety
\[
SU_C(2,\omega, k):=\bigl\{E\in SU_C(2, \omega): h^0(C,E)\geq k\bigr\}
\]
is a Lagrangian degeneracy locus and each component of $SU_C(2,\omega, k)$ has dimension at least $\beta(2,g,k):=3g-3-{k+1\choose 2}$, see \cite{Mu}, \cite{BF}. Teixidor \cite[Theorem 1.1]{Te} showed that the Mukai-Petri map 
\begin{equation}\label{eq:mp}
\mu_E\colon \mbox{Sym}^2 H^0(C,E)\longrightarrow H^0(C, \mbox{Sym}^2 E)
\end{equation}
is injective for every vector bundle $E\in SU_C(2,\omega)$. In particular, each component of $SU_C(2,\omega, k)$ has dimension precisely $\beta(2,g,k)$. This result however does not establish the non-emptiness of $SU_C(2,\omega,k)$, when $\beta(2, g,k)\geq 0$.

\vskip 5pt

We are interested in the case
\[g=16 \ \mbox{ and } \  k=9,\]
when $\beta(2,16,9)=0$. First, by computing the fundamental class of $SU_C(2,\omega, 9)$ and then using a variation of Teixidor's work \cite{Te},  we show the following result:

\begin{introtheorem}\label{thm:existence}
A general curve $C$ of genus $16$ caries precisely $11$ stable vector bundle $E$ of rank $2$ with $\mathrm{det}(E)\cong \omega_C$ and $h^0(C, E)=9$.
Each of these bundles is globally generated and the corresponding scroll $\PP_E$ is embedded in $\PP H^0(C,E)^{\vee}\cong \PP^8$.
\end{introtheorem}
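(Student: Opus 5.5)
The plan is to compute the class of the zero-dimensional degeneracy locus $SU_C(2,\omega,9)$, and then show that on a general curve every point of it is reduced and is a stable, globally generated bundle with an embedded scroll.

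\textbf{Counting the bundles.} Since $\beta(2,16,9)=0$, Teixidor's theorem \cite{Te} (injectivity of $\mu_E$ in (\ref{eq:mp})) says that $SU_C(2,\omega,9)$ is finite. It also shows that the scheme is reduced at every point: the Zariski tangent space to the Lagrangian degeneracy locus at $E$ is the annihilator of $\mathrm{Im}(\mu_E)$ in $H^1(C,\mathrm{ad}\,E)\cong H^0(C,\mathrm{Sym}^2E)^\vee$. Its dimension is
\[
3g-3-\binom{10}{2}=45-45=0,
\]
because $\mu_E$ is injective. Hence the number of points equals the degree of the fundamental class of the Lagrangian degeneracy locus.

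To compute that degree I would globalize. Over the moduli space of stable bundles (or a suitable finite cover or stack, to handle the lack of a Poincaré bundle), take the rank $2N$ symplectic bundle $\mathcal V$. Fibrewise it is $H^0(E(D))$ modulo $H^0(E(-D))$ for a large reduced divisor $D$, with the residue pairing induced by $\det E\cong\omega_C$. Inside it sit two Lagrangian subbundles: $A=H^0(E(D))$ and $B=H^0(E|_D)$. Then $SU_C(2,\omega,k)$ is the locus where $\dim(A\cap B)\ge k$.

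The class of this locus is given by the Lagrangian Thom--Porteous formula of Pragacz, as a $\widetilde Q$-polynomial in the Chern classes of $A^\vee$ and $B$, indexed by the strict partition $(9,8,\dots,1)$. I would then express these Chern classes via the Grothendieck--Riemann--Roch theorem in terms of the standard generator $\theta$ of $H^2(SU_C(2,\omega))$, plus the Atiyah--Bott/Thaddeus intersection numbers. The expectation is that the result collapses to a multiple of $\theta^{45}$. Evaluating it with Witten's/Thaddeus's volume formula should give $11$, which is the stated number.

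Since the count of reduced points must be non-negative and is constant under deformation, the count $11$ automatically gives non-emptiness for a general curve. This is exactly the point missing from Teixidor's result.

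\textbf{Stability.} Strictly semistable points are $L\oplus(\omega\otimes L^{-1})$ with $\deg L=15$. For these, $h^0(L)+h^0(\omega L^{-1})=2h^0(L)+\deg L-15=2h^0(L)$, which is even, so it is $\ge 10$ exactly when $h^0(L)\ge 5$. But $\rho(16,4,15)=16-5\cdot 6<0$, so this is impossible on a general curve. The remaining case is non-split extensions of $\omega L^{-1}$ by $L$; they are handled by the same Brill--Noether estimate on the line subbundles. This degenerate case would need to be excluded from the intersection count, or the count performed on a stable locus.

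\textbf{Global generation and embedding.} Suppose $E$ fails to be globally generated at $p$, so $h^0(E(-p))\ge 8$. Then $E(-p)$ would give a point of a Brill--Noether locus of bundles with determinant $\omega(-2p)$ having expected dimension negative. I would show it is empty by a degeneration or limit-linear-series argument in the style of Teixidor, which is the "variation of her work" mentioned in the statement.

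Very ampleness of $\OO_{\PP_E}(1)$ works the same way. It amounts to $h^0(E\otimes\I_Z)=h^0(E)-2$ for every length-two subscheme $Z$, i.e. to no point on lines through pairs of fibres, together with separation of points inside a single fibre. Each failure forces membership in a similarly negative-$\beta$ locus of elementary transforms, varying in at most $2$ parameters.

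\textbf{Main obstacle.} I expect the hardest steps to be these two:
\begin{itemize}
\item showing that the negative-dimensional incidences (the twisted Mukai--Petri-type injectivity for elementary transforms $E(-p)$ and $E\otimes\I_Z$) are empty on a general curve, which needs a degeneration argument to a chain of elliptic curves;
\item carrying out the Chern class evaluation reliably to get the number $11$.
\end{itemize}
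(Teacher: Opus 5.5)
Your reducedness argument is the paper's: Teixidor's injectivity of $\mu_E$ makes $SU_C(2,\omega,9)$ smooth of dimension zero, so the number of points equals the degree of the degeneracy class. Your stability argument is also the paper's: $h^0(E)\le 2h^0(L)$ for a degree $15$ subbundle $L$ forces $W^4_{15}(C)\neq\emptyset$.

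The genuine gap is global generation and very ampleness. You reduce both to the emptiness of twisted Brill--Noether loci of negative expected dimension, and then defer that emptiness to an unspecified Teixidor-style degeneration. Negative expected dimension alone proves nothing, and such a degeneration would be a new theorem comparable in strength to Teixidor's. The missing idea is that the Petri isomorphism you already have settles the question directly:
\begin{itemize}
\item If $H^0(C,E)\to E_{|p}$ has rank $\le 1$, so does $\mathrm{Sym}^2H^0(C,E)\to\mathrm{Sym}^2E_{|p}$. Since $\mu_E$ is an isomorphism, $H^0(C,\mathrm{Sym}^2E)\to\mathrm{Sym}^2E_{|p}$ also has rank $\le 1$.
\item Hence $h^0(C,\mathrm{Sym}^2E(-p))\ge 44$. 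Since $\chi(\mathrm{Sym}^2E(-p))=42$ and $\mathrm{End}_0(E)\cong\mathrm{Sym}^2E\otimes\omega_C^{\vee}$, Serre duality gives $h^0(C,\mathrm{End}_0(E)(p))\ge 2$.
\item The determinant is a quadratic form on this space with values in $H^0(C,\OO_C(2p))\cong\mathbb C$, so it has an isotropic vector $\varphi\neq 0$. Being traceless, $\varphi$ satisfies $\varphi^2=0$.
\item Then $L:=\mathrm{Ker}(\varphi)$ satisfies $\omega_C\otimes L^{\vee}\cong E/L\hookrightarrow L(p)$, so $\deg L\ge 15$, contradicting stability.
\end{itemize}
The paper obtains very ampleness by the same mechanism.

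Secondarily, your evaluation of the class is not workable as stated. $SU_C(2,\omega)$ is singular and carries no Poincar\'e bundle, and Thaddeus' intersection numbers are for the smooth odd-degree moduli space. There is also no reason for the class to collapse to a multiple of $\theta^{45}$. The Chern classes of the two Lagrangian bundles involve all K\"unneth components of $c_2$ of a universal bundle, not just $\theta$; compare the classes $\beta,\gamma$ in (\ref{eq:recursion}).

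The paper instead pulls everything back to the Hecke correspondence $\PPP\to SU_C(2,\omega_C(p))$. There $B_{\PPP}(9)=\rho^{-1}(SU_C(2,\omega,9))$ is a $\PP^1$-fibration over the finite locus. Mukai's determinantal formula, with the LNP recursion, gives $[B_{\PPP}(9)]^{\mathrm{vir}}=F_9+h\,G_9$ as a polynomial in $\alpha,\beta,\gamma,h$. Thaddeus' formulas then yield $N_{16}=\frac12\int\alpha\cdot G_9=11$.
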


In particular, Theorem \ref{thm:existence} shows that $\mm_{16}$ possesses a modular cover of low degree. We let  $\mathcal{SU}_{16}(2, \omega, 9)$ be the moduli space classifying pairs $[C,E]$, where $[C]\in \cM_{16}$ and $E\in SU_C(2, \omega, 9)$ and consider the forgetful map of generic degree $11$
\begin{equation}\label{eq:varth}
\vartheta\colon \mathcal{SU}_{16}(2, \omega, 9)\longrightarrow \cM_{16}, \ \ \  [C, E]\mapsto [C].
\end{equation}

The low degree of $\vartheta$ is a  surprise, especially since, as explained in this paper,  the cardinality of $SU_C(2,\omega,k)$ for a general curve $[C]\in \cM_g$ when $\beta(2,g,k)=0$ grows 
very fast. It would be interesting to  geometrically construct these $11$ vector bundles for a general curve of genus $16$. 

\vskip 3pt

The first  instance where it has been pointed out that an analogue of the map $\vartheta$  can be used to derive important consequences on the geometry of $\mm_g$ was in \cite{FJP1}, where it was shown that the locus in $\mm_{13}$ consisting of curves $[C]$ such that $\mu_E$ is not an isomorphism for a vector bundle $E\in SU_C(2,\omega,8)$ leads to the first example of an effective divisor on $\mm_g$ having slope less that $6+\frac{10}{g}$. This quantity becomes relevant when trying to formulate replacements for the \emph{Slope Conjecture} \cite{FP}.

\vskip 5pt

In this paper we consider the locus $\mathfrak{D}_2$  which is the closure of the locus of those curves $[C]\in \cM_{16}$ carrying a stable rank $2$ vector bundle $F$ with 
$h^0(C,F)=7$ and $\mbox{det}(F)\cong \omega_C(-x-y-z)$,  for points $x,y,z\in C$. Using Theorem \ref{thm:existence}, it is not hard to see that $\mathfrak{D}_2$ is indeed expected to be a divisor in $\cM_{16}$. Starting with a stable vector bundle $E$ with $\mbox{det}(E)\cong \omega_C$ and $h^0(C,E)=9$, we perform an elementary transformation at three points $x, y,z\in C$ of the form 
\begin{equation}\label{eq:evxyz}
\mathrm{ev}\colon E\longrightarrow \ell_x\oplus \ell_y\oplus \ell_z,
\end{equation}
where $\ell_x\in \PP\bigl(E_{|x}^{\vee}\bigr)$, $\ell_y\in \PP\bigl(E_{|y}^{\vee}\bigr)$ and $\ell_z\in \PP\bigl(E_{|z}^{\vee}\bigr)$ are $1$-dimensional quotients in the fibres of $E$ at the respective points. Setting $F:=\mbox{Ker}(\mathrm{ev})$, we find $\mbox{det}(F)\cong \omega_C(-x-y-z)$ and the condition $h^0(C,F)\geq 7$ is equivalent to $\mbox{rk}(\mathrm{ev})\leq 2$. This is expected to be a codimension $7$ condition over the $6$-dimensional parameter space of tuples $(x,y,z, \ell_x, \ell_y,\ell_z)$ as above. By varying both $C$ and $E$ we expect to get in this way a divisor in $\cM_{16}$. An alternative description of the locus $\mathfrak{D}_2$ as the (push forward of the) universal trisecant divisor in $\mathcal{SU}_{16}(2, \omega)$ is provided by Proposition \ref{prop:d2=tris}. Finally, an explicit description of the tangent spaces of the fibres of the map $\mathfrak{D}_2\rightarrow \cM_{16}$ in terms of a twisted non-abelian Petri map is provided in Proposition \ref{prop:twisted_bn}.

\vskip 5pt

The key observation is that under generality assumptions, a curve $[C]\in \mathfrak{D}_2$ lies on a \emph{canonical surface} on which it moves in its linear system. Indeed, we may assume that such a vector bundle $F$ is globally generated and denote by 
$$\phi_F\colon C\longrightarrow G\bigl(2, H^0(C,F)^{\vee}\bigr)\cong \GG$$
the induced map and by $\mbox{det}\colon \bigwedge^2 H^0(C,F)\rightarrow H^0\bigl(C, \omega_C(-x-y-z)\bigr)$ the determinant map. Under the expectation that $\mathrm{det}$ is surjective, we have an induced injective map 
$$\mathrm{det}^{\vee}\colon \PP H^0\bigl(\omega_C(-x-y-z)\bigr)^{\vee} \longrightarrow  \PP\bigl(\bigwedge^2 H^0(C,F)^{\vee}\bigr)\cong \PP^{20}$$ and the following commutative diagram, where the vertical map $\mathfrak{p}$ is the Pl\"ucker embedding: 

$$\xymatrix@R=3pc@C=4pc{
C\ar[d]_{|\omega_C(-x-y-z)|}  \ar[r]^{\phi_F}& G\bigl(2, H^0(C,F)^{\vee}\bigr) \ar[d]_{\mathfrak{p}} \\
\PP H^0\bigl(\omega_C(-x-y-z)\bigr)^{\vee} \ar[r]^{\mathrm{det}^{\vee}} & \PP\bigl(\bigwedge^2 H^0(C,F)^{\vee}\bigr) \\
                 }
$$
In particular, $X:=\PP H^0\bigl(\omega_C(-x-y-z)\bigr)^{\vee}\cap \GG$ is a codimension $8$ linear section of the $10$-dimensional Grassmannian $\GG\cong G(2,7)$. By adjunction, since $K_{\GG}=\OO_{\GG}(-7)$, if the intersection is transverse, the  section $X$ is a canonical surface, that is, $\omega_X\cong \OO_X(1)$. The curve $C$  lies on $X$ and by adjunction $\OO_C(C)\cong \OO_C(x+y+z)$, therefore we have the following exact sequence
\begin{equation}\label{eq:seq1}
0\longrightarrow \OO_X\longrightarrow \OO_X(C)\longrightarrow \OO_C(x+y+z)\longrightarrow 0,
\end{equation}
from which we get $\mbox{dim } \bigl|\OO_X(C)\bigr|=1$. Denoting by $\Gamma_2\subseteq \mm_{16}$ the curve in moduli induced by this linear system, and recalling the notation $\lambda, \delta_0, \ldots, \delta_{\lfloor \frac{g}{2}\rfloor}$ for the standard generators of $CH^1(\mm_g)$, we compute that 
\begin{equation}\label{eq:int2}
\Gamma_2\cdot \lambda=\chi(X,\OO_X)+15=29 \ \ \mbox{ and } \ \ \Gamma_2\cdot \delta_0=189,
\end{equation}
The proof of (\ref{eq:int2}) is completed in Proposition \ref{prop:pencild2}. Here we used that $\mbox{deg}(X)=42$, which equals the degree of the Grassmannian $G(2,7)$. Furthermore, under the same generality assumptions, $\Gamma_2\cdot \delta_i=0$, for $i=1, \ldots, 8$. Since the canonical class of the moduli space is given by the well-known formula  $K_{\mm_{16}}\equiv 13\lambda-2\delta_0-3\delta_1-2\delta_2-\cdots-2\delta_8$, see also (\ref{canm}), we compute from (\ref{eq:int2})  that
 \begin{equation}\label{eq:int3}
\Gamma_2\cdot K_{\mm_{16}}=13\cdot 29-2\cdot 189=-1.
\end{equation}

We summarize the above considerations in the following:

\begin{introtheorem}\label{thm:bnr2}
The non-abelian Brill-Noether locus $\overline{\mathfrak{D}}_2$ possesses a divisorial component $\overline{\mathfrak{D}}_{\mathrm{rk}2}$ whose general point corresponds to a pair $[C,F]$ as above, such that the induced canonical surface $X\subseteq \GG$ is smooth. Furthermore, through a general point $[C]$ of $\overline{\mathfrak{D}}_{\mathrm{rk}2}$ there passes a rational curve $\Gamma_2\subseteq  \overline{\mathfrak{D}}_{\mathrm{rk}2}$ such that $\Gamma_2\cdot K_{\mm_{16}}=-1$. 
\end{introtheorem}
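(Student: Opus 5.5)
The plan is to construct the component $\overline{\mathfrak{D}}_{\mathrm{rk}2}$ from the opposite direction, starting with a smooth canonical surface. Take a general codimension $8$ linear section $X=\GG\cap \Lambda$ of $\GG=G(2,7)\subseteq \PP^{20}$, with $\Lambda\cong \PP^{12}$. By Bertini it is a smooth surface. Adjunction gives $\omega_X\cong \OO_X(1)$ and $\deg X=42$. Restricting the tautological quotient bundle gives a rank $2$ bundle $\mathcal{Q}_X$ on $X$ with $\det \mathcal{Q}_X=\OO_X(1)$ and $H^0(X,\mathcal{Q}_X)\supseteq \mathbb{C}^7$.

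Next I would exhibit on $X$ a smooth curve $C$ of genus $16$ with $C^2=3$ and $K_X\cdot C=27$, so that $2g-2=30$. Then $\OO_C(1)\cong\omega_C(-C)$ and $\OO_C(C)$ has degree $3$. One choice is to produce $C$ by degeneration: find a nodal curve on a suitable special section, or build the class $C$ from the Picard lattice of a special $X$, and then deform. A second choice is to start from $[C]\in\cM_{16}$, a bundle $E$ from Theorem \ref{thm:existence}, and an elementary transformation (\ref{eq:evxyz}) with $\mathrm{rk}(\mathrm{ev})\le 2$. One then checks on one explicit example, for instance by a computer-assisted Macaulay2 computation over a finite field in the spirit of \cite{Sch1}, that $F$ is stable and globally generated, that $h^0(F)=7$, that $\det$ is surjective, and that $X$ is transverse.

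With the example in hand, I would argue that the locus of such $[C]$ is a divisor. The parameter count is the following. The sections $X$ move in $G(13,21)$ modulo $PGL(7)$, which has dimension $8\cdot13-48=56$. Fixing $X$, the curves $C$ move in $|\OO_X(C)|$ of dimension $1$ by (\ref{eq:seq1}), using $h^1(\OO_X)=0$ since $X$ is regular as a Grassmannian section. Further, $h^0(\mathcal{O}_X(C))=2$ provided $h^0(\OO_C(x+y+z))=1$. Hence the curves $C$ form a family of dimension $57$ in moduli, assuming the fibres over $[C]$ are finite. Since $\dim \cM_{16}=45$, this count cannot be literally right; I would correct it by comparing with the expected count via the map $[C,F]\mapsto[C]$. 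Dimension $\ge 44$ follows because $\mathfrak{D}_2$ is a degeneracy locus of expected codimension $1$, as explained before the statement. The locus is not all of $\cM_{16}$ because the generic finiteness and injectivity of the twisted Petri map of Proposition \ref{prop:twisted_bn} at the example gives tangent-space dimension exactly $44$. Any component through the example is then a divisor $\overline{\mathfrak{D}}_{\mathrm{rk}2}$, and smoothness of $X$ is an open condition, so it holds at the general point.

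Finally, the pencil $|\OO_X(C)|$ gives the rational curve $\Gamma_2$ through $[C]$. Its intersection numbers were computed in (\ref{eq:int2}) via Proposition \ref{prop:pencild2}. Concretely, $\lambda\cdot\Gamma_2=\chi(\OO_X)+g-1$, and $\delta_0\cdot\Gamma_2$ equals the topological Euler characteristic count $e(X)+C^2+4g-4$ of singular fibres in the blown-up pencil. The fibres are irreducible for general $X$, giving $\delta_i\cdot\Gamma_2=0$ for $i\ge1$. Combining these with (\ref{canm}) yields (\ref{eq:int3}), so $\Gamma_2\cdot K_{\mm_{16}}=-1$. Every member of the pencil again lies in $\mathfrak{D}_2$, since $\mathcal{Q}$ restricts to a bundle of the right type, so $\Gamma_2\subseteq\overline{\mathfrak{D}}_{\mathrm{rk}2}$.

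The main obstacle is the existence step together with the dimension claim, namely producing one pair $(X,C)$ with all generality properties and an injective twisted Petri map. I would attack this first by an explicit computer search.
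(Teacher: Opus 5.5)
Your argument has a genuine gap at exactly the step you flag as the obstacle, and the set-up around it does not work either.

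\textbf{The existence step.} A general codimension $8$ section $X=\GG\cap\PP^{12}$ carries no curve $C$ with $C\cdot K_X=27$ and $C^2=3$. For $C$ to survive, the class $c_1(\OO_X(C))$ must stay of type $(1,1)$. The first-order deformations of $X$ preserving this form $\mathrm{Ker}\bigl\{H^1(X,T_X)\to H^2(X,\OO_X)\bigr\}$, which has dimension $56-13=43$. So the pairs $(X,C)$ form a family of dimension $43+1=44$. This Noether--Lefschetz condition is what your count of $57$ misses, and ``comparing with the expected count'' does not repair it. Hence ``take a general $X$ and exhibit $C$ on it'' cannot work, and producing a single good pair $(X,C)$ is the whole content of the theorem.

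Neither of your suggestions supplies such a pair. The second is circular: a point of $\cZ$ is exactly what must be produced, and it can only lie over a special curve. Moreover, no explicit construction of the $11$ bundles $E$ of Theorem \ref{thm:existence} is available, so there is nothing concrete to search over. The first names no specific special $X$ or lattice.

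The missing idea is \emph{linkage} with a genus $4$ curve. Take $(B,F,V)$ with $g(B)=4$, $F$ of rank $2$ and degree $15$, and $V\subseteq H^0(B,F)$ of dimension $7$ with $\mathrm{rk}(\mathrm{det}_V)=11$. Then $\phi_V(B)\subseteq\GG$ spans only $\Lambda\cong\PP^{10}$, and excess intersection gives $\Lambda\cdot\GG=B+x+y+z$. For general $\Lambda\subseteq P\cong\PP^{11}\subseteq\Pi\cong\PP^{12}$, the surface $X=\Pi\cdot\GG$ is smooth and $P\cdot X=B+C$, with $g(C)=16$, $C^2=3$ and $\OO_C(1)\cong\omega_C(-x-y-z)$ (Propositions \ref{prop:excess} and \ref{prop:residual16}). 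This turns existence into an explicit, checkable construction on a hyperelliptic genus $4$ curve over $\mathbb{F}_{17}$: the rank of $\mathrm{det}_V$ and transversality of $\Lambda\cdot\GG$ along $B$.

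\textbf{The divisoriality step.} This is also not correct as stated.
\begin{enumerate}
\item The twisted Petri map $\mu_{F,D}$ goes from a space of dimension $7\cdot 10-21=49$ to one of dimension $h^0(\mathrm{End}(F)\otimes\omega_C)-h^0(\omega_C(-D))=61-13=48$. It can never be injective; what is needed is rank $48$.
\item That map only controls the fibres of $\cZ\to\cM_{16}$, not the dimension of $\cZ$. To rule out a $45$-dimensional component dominating $\cM_{16}$, the paper separately proves that $\HH_1$ is smooth of dimension $74$ at $[B\hookrightarrow\GG]$, via surjectivity of $\nu_B$ (Proposition \ref{prop:transv8}). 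This gives a $35$-dimensional component of $\cZ_1$, hence $\dim\cZ=44$ via $\dim\cZ\le\dim\cZ_1+9$.
\item At the linked curves, which are the only examples you can actually build, $\mu_{F_C,D}$ has rank only $42$, because $\langle C\rangle\cong\PP^{11}$ (Proposition \ref{prop:transv9}). One must deform $C$ to first order inside $\GG$ off $\cZ_1$ and check that $\Theta_\zeta\colon\mathrm{Ker}(\mu_{F_C,D})\to\mathrm{Coker}(\mu_{F_C,D})$ has rank $6$.
\end{enumerate}

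Your values $\Gamma_2\cdot\lambda=29$ and $\Gamma_2\cdot\delta_0=126+3+60=189$ agree with Proposition \ref{prop:pencild2}. However, $X$ is not a general section, so you need that proposition's nodality hypothesis rather than ``irreducible for general $X$''.
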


The proof of Theorem \ref{thm:bnr2} takes up a large portion of the paper and it involves a specialization to the locus of curves $[C]\in \cM_{16}$  for which the determinant map $\mathrm{det}\colon \bigwedge^2 H^0(C,F)\rightarrow H^0\bigl(C, \omega_C(-x-y-z)\bigr)$ is \emph{not} surjective, which means that the  Pl\"ucker image $P:=\langle C\rangle \subseteq \PP^{20}$  is $11$-dimensional. In this case, $C$ is linked to a curve $B$ of genus $4$ defined via the intersection
$$P\cap\GG=C+B.$$
The proof of Theorem \ref{thm:bnr2} consists of (i) constructing starting with $[B]\in \cM_4$ and an embedding $B\hookrightarrow \GG$ a point $[C,F]$ where all the conditions in Theorem \ref{thm:bnr2} are satisfied,  then (ii) deforming this construction to cover the case of a general point of $\dd2$.

\vskip 5pt

We now explain how Theorems \ref{sweep} and \ref{thm:bnr2} imply that $K_{\mm_{16}}$ in not pseudo-effective. Using \cite{BDPP}, this shows that $\mm_{16}$ is uniruled, thus proving Theorem \ref{main_thm}.  To that end, we fix an ample class $A\in \mathrm{Ample}(\mm_{16})$ and assume by contradiction $K_{\mm_{16}}$ is pseudo-effective, therefore for $\epsilon\in \mathbb Q_{>0}$ sufficiently small, we  have an effective $\mathbb Q$-representation 
\begin{equation}\label{eq:can_rep}
K_{\mm_{16}}+\epsilon\cdot A\equiv c_{\epsilon} \cdot\dd2 + D_{\epsilon},
\end{equation}
where $c_{\epsilon} \in \mathbb Q_{\geq 0}$ and $D_{\epsilon}$ is an effective $\mathbb Q$-divisors on $\mm_{16}$ not containing $\dd2$ in its support.  We intersect (\ref{eq:can_rep}) with the curve $\Gamma_2$, which we choose  general enough such that $\Gamma_2\nsubseteq D_{\epsilon}$ and $\Gamma_2\nsubseteq \Delta_0$. Since $\Gamma_2\cdot K_{\mm_{16}}=-1$,  we obtain $$-1+\epsilon\ \bigl(\Gamma_2 \cdot A\bigr)=c_{\epsilon}\ \bigl(\Gamma_2 \cdot \dd2\bigr)+
\bigl(\Gamma_2\cdot D_{\epsilon}\bigr) \geq c_{\epsilon}\ \bigl(\Gamma_2 \cdot \dd2\bigr),$$ implying $\Gamma_2\cdot \dd2<0$. Furthermore, $$c_{\epsilon}\geq \frac{1-\epsilon\ \bigl(\Gamma_2\cdot A)}{-\bigl(\Gamma_2\cdot \dd2\bigr)},$$ which shows that the coefficients $c_{\epsilon}$ do not converge to $0$. Next we intersect
(\ref{eq:can_rep}) with the sweeping curve $\Gamma_1$ from Theorem \ref{sweep}. Since $\Gamma_1\cdot K_{\mm_{16}}=0$, we obtain 
$$-\epsilon\ \bigl(\Gamma_1\cdot A\bigr)=c_{\epsilon}\ \bigl(\Gamma_1 \cdot \dd2) + \bigl(\Gamma_1 \cdot D_{\epsilon}\bigr)\geq c_{\epsilon}\ \bigl(\Gamma_1\cdot \dd2\bigr),$$ from which, also using that $\Gamma_1\cdot \Delta_0>0$, it follows that
$$0\leq \Gamma_1\cdot \dd2 \leq -\bigl(\Gamma_2\cdot \dd2\bigr)\cdot \bigl(\Gamma_1\cdot A\bigr)\cdot \frac{\epsilon}{1-\epsilon\cdot \bigl(\Gamma_2\cdot A\bigr)}.$$
Passing to the limit, we obtain $\Gamma_1\cdot \dd2$=0, that is, $\dd2$ has slope $\frac{13}{2}$. We then finish the proof by showing in Theorem \ref{thm:slope-ruleout} that no divisorial component of the non-abelian Brill-Noether locus $\overline{\mathfrak{D}}_2$ can possibly have slope $\frac{13}{2}$, in particular $K_{\mm_{16}}$ is not pseudo-effective. Since it is also proved in \cite{HM} that if $\widehat{\cM}_g\rightarrow \mm_g$ is any resolution of singularities of $\mm_g$, for any $\ell\geq 0$ one has an isomorphism $H^0\bigl(\mm_g, K_{\mm_g}^{\otimes \ell}\bigr)\stackrel{\cong}\longrightarrow H^0\bigl(\widehat{\cM}_g, K_{\widehat{\cM}_g}^{\otimes \ell}\bigr)$,  this finishes the proof of Theorem \ref{main_thm} and shows that $\mm_{16}$ is uniruled.

\vskip 6pt

\noindent {\small {\bf Acknowledgments:} We are most grateful to J\'anos Koll\'ar for a very useful observation concerning Theorem \ref{main_thm}, pointing out that our proof implies the non \emph{pseudo}-effectiveness of the canonical class $K_{\mm_{16}}$, rather than merely saying that the Kodaira dimension of $\mm_{16}$ is negative. 

\vskip 9pt
 
Farkas was  supported by the Berlin Mathematics Research Center MATH+, by the Einstein Foundation Berlin and by the ERC Advanced Grant SYZYGY (grant No. 834172). This work was started when Verra was an invited professor of the Berlin Mathematical School.

The work in this paper has been completed by late 2024. The first named author lectured on these results at several major conferences in algebraic geometry including Taipei 2024, San Diego 2024, Frankfurt 2025 and Hannover 2026.}

\section{The uniruled parametrization of $\mm_{15}$}
The parametrization of the boundary divisor $\Delta_0$ of $\mm_{16}$ and the proof of Theorem \ref{sweep} uses several results from \cite{BV}, which we now recall. We denote by $\H_{15,9}$ the Hurwitz space parametrizing degree $9$ covers $C\rightarrow \PP^1$ having simple ramification, where $C$ is a smooth curve of genus $15$. Then $\H_{15,9}$ is birational to the parameter space $\G_{15,9}^1$ classifying pairs $(C, A)$, where $[C]\in \cM_{15}$ and $A\in W^1_9(C)$ is a pencil. By residuation, $\mathcal{G}^1_{15,9}$ is isomorphic to the parameter space $\mathcal{G}^6_{15,19}$ of pairs $[C,L]$, where $C$ is a smooth curve of genus $15$ and $L\in W^6_{19}(C)$. Note that the general fibre of the forgetful map
\begin{equation}\label{eq:pi}
\pi \colon \mathcal{H}_{15, 9}\longrightarrow \cM_{15}, \ \ \ \ [C, A]\mapsto [C]
\end{equation} is $1$-dimensional. Clearly, $\H_{15,9}$ and thus $\mathcal{G}_{15,19}^6$ is irreducible.

\vskip 4pt

We pick a general element $[C,L]\in \G^6_{15, 19}$, in particular $L$ is very ample and $h^0(C,L)=7$. We set $A:=\omega_C\otimes L^{\vee}\in W^1_9(C)$. We may assume that $A$ is base point free and the pencil $|A|$ has simple ramification. We consider the multiplication map
$$\phi_L\colon \mbox{Sym}^2 H^0(C,L)\longrightarrow H^0(C,L^2).$$
Since $C$ is Petri general, $h^1(C,L^2)=0$, therefore $h^0(C,L^2)=2\cdot 19+1-15=24$. Furthermore, via a degeneration argument it is shown in \cite[Theorem 3.11]{BV}, that for a general choice of $(C,L)$, the map $\phi_L$ is surjective, hence $h^0(\PP^6, \mathcal{I}_{C/\PP^6}(2))=\mbox{dim}\bigl(\mbox{Ker}(\phi_L)\bigr)=4$, that is, the degree $19$ curve $C\subseteq \PP^6$ lies on precisely $4$ independent quadrics. We let
\begin{equation}\label{cansurf}
S:=\mathrm{Bs}\bigl|\mathcal{I}_{C/\PP^6}(2)\bigr|
\end{equation}
be the base locus of the system of quadrics containing $C$. It is further established in \cite[Theorem 3.11]{BV} that under our generality assumptions, $S$ is a smooth surface. From the adjunction formula, since $\omega_{\PP^6}\cong \OO_{\PP^6}(-7)$, it follows that $\omega_S=\OO_S(1)$, that is, $S$ is a canonical surface. We write down the exact sequence
\begin{equation}\label{ex_seq}
0\longrightarrow \OO_S\longrightarrow \OO_S(C)\longrightarrow \OO_C(C)\longrightarrow 0.
\end{equation}

From the adjunction formula $\OO_C(C)\cong \omega_C\otimes \omega_{S|C}^{\vee}=\omega_C\otimes L^{\vee}=A\in W^1_9(C)$. Since $S$ is a regular surface, by taking cohomology in (\ref{ex_seq}), we obtain $$h^0(S, \OO_S(C))=h^0(S,\OO_S)+h^0(C,A)=3.$$ Observe also from the sequence (\ref{ex_seq}) that the linear system $\bigl|\OO_S(C)\bigr|$ is base point free, because $\bigl|\OO_C(C)\bigr|=|A|$ is so. This brings to an end our summary of the results from \cite{BV}.

\vskip 5pt

In what follows, we denote by \begin{equation}\label{finitemap}
f\colon S\longrightarrow \PP^2=\bigl|\OO_S(C)\bigr|^{\vee}
\end{equation}
the induced map. For what we intend to do, it is important to show that $f$ is a finite map, or equivalently, that $\OO_S(C)$ is ample.

\begin{theorem}\label{prop:ample}
For a general pair $(C, A)\in \mathcal{H}_{15,9}$, the line bundle $\OO_S(C)$ is ample.
\end{theorem}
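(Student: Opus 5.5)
The plan is to use the Nakai--Moishezon criterion. Since $\bigl|\OO_S(C)\bigr|$ is base point free, $\OO_S(C)$ is nef, and $C^2=\deg \OO_C(C)=\deg A=9>0$. So $\OO_S(C)$ is big, and $f$ is a morphism of degree $9$ onto $\PP^2$. In particular $f$ is generically finite. Ampleness is therefore equivalent to the absence of irreducible curves $R\subseteq S$ with $R\cdot C=0$, i.e.\ curves contracted by $f$. By the Hodge index theorem any such $R$ has $R^2<0$. I will show that for general $(C,A)$ no such $R$ exists.

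First I restrict the numerics of a putative contracted curve $R$. Since $C$ moves in the two-dimensional base point free system $\bigl|\OO_S(C)\bigr|$, whose elements are pulled back from lines in $\PP^2$, and $R$ maps to a point, a general member $C'\in \bigl|\OO_S(C)\bigr|$ is disjoint from $R$. The point is that $C$ itself is a general member. Indeed, the construction of $S$ from $(C,L)$ is canonical, so as $(C,A)$ varies in $\H_{15,9}$ the curve $C$ runs over all smooth members of $\bigl|\OO_S(C)\bigr|$ for the surfaces $S$ arising. This allows me to vary the pair $(S,C)$ while keeping $R$ fixed on $S$.

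Next I use adjunction together with $\omega_S=\OO_S(1)$. We have $2p_a(R)-2=R^2+\deg R$, where $\deg R=R\cdot H\geq 1$ is the degree in $\PP^6$. I then compare $H=K_S$ with $C$. From $\OO_C(H)=L$ and $\OO_C(C)=A$ we get $\OO_C(H-C)=L\otimes A^{\vee}$, which has degree $10$, and $h^0(S,\OO_S(H-C))$ is computed from the sequence $0\to\OO_S(-C)\to\OO_S(H-C)\to \omega_C\otimes A^{\vee\otimes 2}\to 0$.

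The main obstacle is excluding contracted curves of small degree. The natural approach is a parameter count, and this is the step I expect to be hardest, namely the lower bound on $\dim \H_{15,9}$ combined with the count below. Contracted curves come in finitely many numerical types, since $R^2<0$ and $R\cdot C=0$ force $R^2$ and the genus to be bounded in terms of $\deg R$, and $\deg R$ is bounded because $R$ lies in a fibre of $f$ over a point. Such a fibre is an intersection of two members of $|C|$, whose total degree is $C\cdot H=19$. If a contracted curve existed for general $(C,A)$, then $C$ would be a degree $19$ genus $15$ curve on a $(2,2,2,2)$ complete intersection containing a rational or low-genus curve $R$ disjoint from $C$. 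I would count the dimension of the family of pairs $(S\supseteq R)$ with $S$ a $(2,2,2,2)$ complete intersection containing a curve of the given type, add $\dim\bigl|\OO_S(C)\bigr|=2$, and show the result is strictly less than $\dim \H_{15,9}=3\cdot 15-3+1=43$, modulo $PGL_7$. The count of complete intersections containing $R$ follows from the fact that $h^0(\mathcal{I}_R(2))$ is smaller than $28$ by roughly $2\deg R+1-p_a(R)$.

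As an alternative for the hardest step, I would specialize to the degenerate configuration of \cite[Theorem 3.11]{BV}, where $S$ and $f$ are explicit. There I would check finiteness of $f$ directly, or equivalently that $C+R$ is not in the image. Openness of ampleness in smooth families would then give the result for general $(C,A)$.
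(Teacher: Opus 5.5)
Your reduction is fine. $\OO_S(C)$ is nef and big, so by Nakai--Moishezon ampleness amounts to the absence of curves $R$ with $R\cdot C=0$, and such curves have bounded degree. The gap is the dimension count meant to exclude them.

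Bounding the bad locus by $\dim\{(S\supseteq R)\}/PGL(7)+2$ never uses that $S$ lies in the $41$-dimensional family $\cS$, i.e.\ that $S$ also contains a curve of class $C$ with $R\cdot C=0$. Without this, the bound is too weak precisely for the low-degree curves that matter:
\begin{itemize}
\item For a line $R\subseteq\PP^6$ one has $h^0(\I_R(2))=25$. So pairs $(S,R)$ form a family of dimension $\dim G(2,7)+\dim G(4,25)=10+84=94$, that is $46$ modulo $PGL(7)$.
\item Thus complete intersections of four quadrics containing a line have codimension only $2$ in their $48$-dimensional moduli, far more than $\dim\cS=41$. Your inequality becomes $46+2=48\geq 43$.
\item With your own estimate $h^0(\I_R(2))=27-2d$, the same failure occurs for conics and, in general, for rational curves of degree $d\leq 6$, where the count gives $49-d$.
\end{itemize}
To rescue the count you would have to prove that the Noether--Lefschetz conditions ``$S$ contains a curve of class $C$'' and ``$S$ contains $R$ with $R\cdot C=0$'' meet properly. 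That is essentially a restatement of the theorem, not something a parameter count provides.

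Your fallback, degenerating to an explicit configuration and verifying finiteness there, is the strategy the paper uses. As stated, though, it is only a placeholder. You do not specify the configuration, and it is not clear that the degeneration in \cite[Theorem 3.11]{BV} gives a finite $f$. Nor do you explain how contracted curves would be excluded, and that is where all the work lies.

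The missing ingredient is a surface whose curves are controlled by a known lattice. The paper takes $S$ to be the double cover $\sigma\colon S\to Y$ of a $K3$ surface $Y\subseteq\PP^5$ of genus $5$ with $\mathrm{Pic}(Y)=\Lambda$ of rank $3$. The cover is branched along $\bar Q\in|2\bar H|$ tangent to the line $\bar R$, so $\sigma^*\bar R=R+R'$, and $C$ is degenerated to $\sigma^*\bar F+R$. A limit of contracted curves then pushes down under $\sigma$ to a curve contracted by a linear projection of the quartic model $\varphi_{\bar F}(Y)\subseteq\PP^3$, i.e.\ a line. Such lines are ruled out by the intersection form of $\Lambda$, using that $\bar F$ is very ample by Proposition \ref{fample}.

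Note also that at the special point $\OO_S(\sigma^*\bar F+R)$ is not even nef, since $R^2=-3$ and $\sigma^*\bar F\cdot R=2$. So openness of ampleness cannot be applied to that fibre directly; the paper instead argues with the limits of the contracted curves.
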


In order to prove Theorem \ref{prop:ample} it suffices to exhibit a single pair $[C, A] \in \H_{15,9}$ for which the corresponding map
$f\colon S\rightarrow \PP^2$ given by (\ref{finitemap}) is finite. We shall realize the canonical surface $S\subseteq \PP^6$ as the double cover of a  $K3$ surface $Y\subseteq \PP^5$ of genus $5$ (that is, of degree $8$) having a certain Picard lattice of rank $3$. 

\begin{definition}\label{lattice3} We denote by $\Lambda$ the even lattice of signature $(1,2)$ generated by elements $\bar{H}, \bar{F}$ and $\bar{R}$ having the following intersection matrix:
$$ \begin{pmatrix}
    \bar{H}^2 & \bar{H}\cdot \bar{F} & \bar{H}\cdot \bar{R}\\
     \bar{F}\cdot \bar{H} & \bar{F}^2 & \bar{F}\cdot \bar{R} \\
   \bar{R}\cdot \bar{H} & \bar{R}\cdot \bar{F} & \bar{R}^2\\
   \end{pmatrix}
   =
   \begin{pmatrix}
 8 & 9& 1\\
9 & 4 & 2 \\
1 & 2 & -2\\
\end{pmatrix}.
$$

We denote by $\F_5^{\Lambda}$ the moduli space of polarized $K3$ surfaces $[Y,\bar{H}]$, where $\bar{H}^2=8$, admitting a primitive embedding
$\Lambda\hookrightarrow \mbox{Pic}(Y)$, such that the classes $\bar{H}, \bar{F}, \bar{R}$ correspond to curve classes on $Y$ which we denote by the same symbol. Furthermore, $\bar{H}\in \mathrm{Pic}(Y)$ is assumed to be ample.
\end{definition}

For details on the construction of the moduli space $\F_5^{\Lambda}$ we refer to \cite[Section 3]{Do}. It follows from \emph{loc.cit.} that $\F_5^{\Lambda}$ is an irreducible variety of dimension $17=20-\mathrm{rk}(\Lambda)$. Let us now fix a general element $[Y, \bar{H}]$, where
$\mbox{Pic}(Y)\cong \mathbb Z\langle \bar{H}, \bar{F}, \bar{R}\rangle$ as in Definition \ref{lattice3}. Then $\OO_Y(\bar{H})$ is very ample and we denote by
\begin{equation}\label{eq:Y}
\varphi_{\bar{H}}\colon Y\hookrightarrow \PP^5
\end{equation}
the embedding induced by this linear system. Observe that $h^0\bigl(\PP^5, \mathcal{I}_{Y/\PP^5}(2)\bigr)=3$ and that $Y=\mathrm{Bs}\bigl|\mathcal{I}_{Y/\PP^5}(2)\bigr|$ is in fact a complete intersection of three quadrics. Note that $\bar{F}\subseteq Y$ is a curve of genus $3$, whereas $\bar{R}\subseteq Y$ is a smooth rational curve embedded as a line under the map $\varphi_{\bar{H}}$. The class $\bar{E}:=2\bar{H}-\bar{F}$ satisfies $\bar{E}^2=0$. Since $\bar{E}\cdot \bar{H}=7>0$, it follows that  $\bigl|\bar{E}\bigr|$ is an elliptic pencil and furthermore $\bar{E}\cdot \bar{R}=0$. Setting also
$$\bar{D}:=2\bar{H}+\bar{R}-\bar{E}\in \mathrm{Pic}(Y),$$
we compute $\bar{D}^2=6$, $\bar{D}\cdot \bar{E}=14$ and $\bar{D}\cdot \bar{R}=0$. In the basis $\bigl(\bar{D}, \bar{E}, \bar{R}\bigr)$ of $\mathrm{Pic}(Y)$, the intersection form on $Y$ is described by the following simpler matrix:
\begin{equation}\label{eq:intmatrix}
\begin{pmatrix}
    \bar{D}^2 & \bar{D}\cdot \bar{E} & \bar{D}\cdot \bar{R}\\
     \bar{E}\cdot \bar{D} & \bar{E}^2 & \bar{E}\cdot \bar{R} \\
   \bar{R}\cdot \bar{D} & \bar{R}\cdot \bar{E} & \bar{R}^2\\
   \end{pmatrix}
   =
   \begin{pmatrix}
 6 & 14& 0\\
14 & 0 & 0 \\
0 & 0 & -2\\
\end{pmatrix}.
\end{equation}

On our way to proving Theorem \ref{prop:ample}, we  establish the following result:

\begin{proposition}\label{fample}
The line bundle $\OO_Y(\bar{F})$ is very ample.
\end{proposition}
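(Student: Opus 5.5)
The plan is to apply Saint-Donat's criteria for linear systems on $K3$ surfaces. Since $\bar{F}^2=4$, the line bundle $\OO_Y(\bar{F})$ is very ample, and embeds $Y$ as a smooth quartic in $\PP^3$, provided three conditions hold:
\begin{enumerate}
\item $\bar{F}$ is ample, that is, $\bar{F}\cdot \Gamma>0$ for every smooth rational curve $\Gamma\subseteq Y$;
\item there is no effective class $\bar{E}'$ with $\bar{E}'^2=0$ and $\bar{F}\cdot \bar{E}'=1$ (this would make $|\bar{F}|$ have base points);
\item there is no such class with $\bar{F}\cdot \bar{E}'=2$ (this would make $|\bar{F}|$ hyperelliptic).
\end{enumerate}
The exceptional case $\bar{F}=2B$ with $B^2=2$ requires $\bar{F}^2=8$, so it does not occur. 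Everything therefore reduces to an arithmetic analysis of the lattice $\mathrm{Pic}(Y)=\mathbb Z\langle \bar{D},\bar{E},\bar{R}\rangle$ with intersection matrix (\ref{eq:intmatrix}).

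First I express everything in the basis $(\bar{D},\bar{E},\bar{R})$. From $\bar{E}=2\bar{H}-\bar{F}$ and $\bar{D}=2\bar{H}+\bar{R}-\bar{E}$ we get $\bar{D}=\bar{F}+\bar{R}$, so $\bar{F}=\bar{D}-\bar{R}$ and $2\bar{H}=\bar{D}+\bar{E}-\bar{R}$. For a class $x=a\bar{D}+b\bar{E}+c\bar{R}$ this gives
$$x^2=6a^2+28ab-2c^2,\qquad \bar{F}\cdot x=6a+14b+2c,\qquad \bar{H}\cdot x=10a+7b+c.$$
Since $\bar{F}\cdot x$ is always even, condition (2) holds automatically. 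For (3), I must show there is no effective isotropic class with $3a+7b+c=1$. For (1), I must show there is no effective $(-2)$-class $x$ with $3a+7b+c\le 0$. Effectivity will be encoded via ampleness of $\bar{H}$, namely $\bar{H}\cdot x>0$; irreducibility will be used only through the fact that an irreducible $(-2)$-curve has nonnegative intersection with every other effective divisor.

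For (3), substitute $c=1-3a-7b$ into $x^2=0$. This gives a quadratic Diophantine equation in $(a,b)$. I would show it has no solutions with $\bar{H}\cdot x>0$, using the Hodge index inequality $(\bar{F}\cdot x)^2\ge \bar{F}^2x^2$ together with positivity of $\bar{H}\cdot x$ to bound $|a|,|b|$ to a small finite range, and then check that range directly. Alternatively, reducing the equation modulo small primes may give an immediate obstruction.

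For (1), I would argue as follows. Suppose $\Gamma$ is an irreducible $(-2)$-curve with $\bar{F}\cdot\Gamma\le 0$. The Hodge index theorem applied to $\bar{F}$ and $\Gamma$, combined with $\bar{H}\cdot\Gamma>0$ and $\bar{H}=\tfrac12(\bar{F}+\bar{E})$, pins $\Gamma$ into a thin region of the lattice. Since $|\bar{E}|$ is an elliptic pencil, $\bar{E}$ is nef, so $\bar{E}\cdot\Gamma=14a\ge 0$. Then $\bar{F}\cdot \Gamma\le 0$ forces $\bar{H}\cdot \Gamma=\tfrac12(\bar{F}\cdot\Gamma+\bar{E}\cdot\Gamma)\le 7a$, and together with $x^2=-2$ this leaves only finitely many candidates. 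The natural candidates are $\bar{R}$ itself, which has $\bar{F}\cdot \bar{R}=2>0$, and classes supported in fibres of the elliptic pencil $|\bar{E}|$, i.e.\ those with $a=0$. For $a=0$ we have $x^2=-2c^2=-2$, so $c=\pm1$ and $x=b\bar{E}\pm\bar{R}$. Irreducibility against $\bar{R}$ together with positivity of $\bar{H}\cdot x$ then gives $\bar{F}\cdot x=14b\pm 2>0$ in the admissible cases.

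I expect the main obstacle to be (1): making the finiteness reduction for $a>0$ clean rather than a case-by-case search. The first thing I would try is to check whether $\bar{F}$ lies in the interior of the nef cone by verifying that the walls of the nef cone are cut out by $\bar{R}$ and by the reflections of $\bar{R}$ in the lattice. If this fails, I would fall back to explicitly bounding $(a,b,c)$ via Hodge index and checking the finitely many remaining cases.
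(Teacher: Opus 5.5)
Your overall strategy is the paper's: Saint-Donat's criterion plus lattice arithmetic in the coordinates $(\bar{D},\bar{E},\bar{R})$, with $a\geq 0$ coming from nefness of $\bar{E}$ and $c\leq 0$ from $\bar{R}$. Your ampleness condition simply merges the paper's nefness step with its check that no $(-2)$-class is orthogonal to $\bar{F}$.

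There is, however, a genuine gap: $(\bar{D},\bar{E},\bar{R})$ is not a $\mathbb Z$-basis of $\mathrm{Pic}(Y)$. Since $\bar{D}=\bar{F}+\bar{R}$ and $\bar{E}=2\bar{H}-\bar{F}$, every integral combination of $\bar{D},\bar{E},\bar{R}$ has even $\bar{H}$-coefficient in the basis $(\bar{H},\bar{F},\bar{R})$. Equivalently, the Gram determinants are $392$ and $98$, so the index is $2$, and $\bar{H}=\frac12(\bar{D}+\bar{E}-\bar{R})$. The correct description is $\mathrm{Pic}(Y)=\{a\bar{D}+b\bar{E}+c\bar{R}: a,b,c\in\frac12\mathbb Z,\ a-b,\ a-c\in\mathbb Z\}$. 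Consequently:
\begin{itemize}
\item your claim that $\bar{F}\cdot x$ is always even is false, since $\bar{F}\cdot\bar{H}=9$, so condition (2) is not automatic;
\item your analyses of (1) and (3) only cover an index-$2$ sublattice.
\end{itemize}
The sentence preceding the proposition in the paper, and the paper's own proof, use the same integral coordinates, so the same correction is needed there.

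The repair is short. Write $A=2a$, $B=2b$, $C=2c$, integers of a common parity. Then $x^2=\frac12(3A^2+14AB-C^2)$, $\bar{F}\cdot x=3A+7B+C$, $\bar{H}\cdot x=\frac12(10A+7B+C)$, $\bar{E}\cdot x=7A$ and $\bar{R}\cdot x=-C$.

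For (2) and (3): if $x^2=0$, then $3A^2\equiv C^2 \pmod 7$. Since $3$ is not a square modulo $7$, this forces $7\mid A$ and $7\mid C$, hence $7\mid \bar{F}\cdot x$. This rules out isotropic classes of $\bar{F}$-degree $1$ or $2$ simultaneously.

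For (1), which you flagged as the obstacle, the missing idea is the device the paper uses to force $a=0$. Let $\Gamma\neq\bar{R}$ be an irreducible $(-2)$-curve with $\bar{F}\cdot\Gamma\leq 0$; then $A\geq 0$ and $C\leq 0$. Multiply $3A+7B+C\leq 0$ by $2A\geq 0$ and substitute $14AB=C^2-3A^2-4$ (from $\Gamma^2=-2$). This gives $(A+C)^2+2A^2\leq 4$, so $A\in\{0,1\}$.
\begin{itemize}
\item If $A=0$, then $\Gamma=b\bar{E}-\bar{R}$ with $b\in\mathbb Z$, which you exclude exactly as in your proposal.
\item If $A=1$, parity forces $C=-1$, and then $\Gamma^2=-2$ reads $14B=-6$, which is impossible.
\end{itemize}
Together with $\bar{F}\cdot\bar{R}=2$, $\bar{F}^2=4$ and $\bar{F}\cdot\bar{H}>0$, this gives ampleness, and Saint-Donat concludes. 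No nef-cone computation or case search is needed.
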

\begin{proof}
We first claim that $\bar{F}$ is nef. Since $\bar{F}^2=4>0$, it suffices to check that for any smooth rational curve
$\bar{\Gamma}\subseteq Y$, one has $\bar{\Gamma} \cdot \bar{F}\geq 0$. We write $\bar{\Gamma} \equiv a\bar{D}+b\bar{E}+c\bar{R}$, where $a,b$ and $c$ are integers.
We may assume $\bar{\Gamma} \neq \bar{R}$, thus $\bar{\Gamma} \cdot \bar{R}\geq 0$, implying $c\leq 0$. Furthermore, $\bar{\Gamma} \cdot \bar{E}\geq 0$, hence $a\geq 0$. Using (\ref{eq:intmatrix}), one has $\bar{\Gamma}^2=6a^2-2c^2+28ab=-2$. Assume by contradiction $\bar{\Gamma}\cdot \bar F=\bar{\Gamma}\cdot (\bar{D}-\bar{R})=6a+14b+2c\leq -2$. Multiplying this inequality with $2a\geq 0$ and substituting in the equality $\bar{\Gamma}^2=-2$  we obtain that $(a+c)^2+2a^2+2a-1\geq 0$, implying $a=0$ and $c\in \{-1,1\}$. If, say $c=1$, then $\bar{\Gamma} \equiv \bar{R}+b\bar{E}$. From the assumption $\bar{\Gamma}\cdot \bar{F}\leq -2$, we obtain that $b\leq -1$, hence $\bar{\Gamma}\cdot \bar{H}<0$, thus $\bar{\Gamma}$ cannot be effective, a contradiction. The case $c=-1$, implying $b\leq 0$ is ruled out similarly

\vskip 4pt

Thus $\bar{F}$ is a nef curve. To conclude that $\bar{F}$ is very ample, we invoke \cite{SD}. It suffices to rule out the existence of a divisor class $\bar{M}\in \mathrm{Pic}(Y)$ such that (i) $\bar{M}^2=0$ and $\bar{M}\cdot \bar{F}\in \{1,2\}$, or satisfying (ii) $\bar{M}^2=-2$ and $\bar{M}\cdot \bar{F}=0$. We discuss only (i), the remaining case being similar. Write $\bar{M}=a\bar{D}+b\bar{E}+c\bar{R}$. Since $\bar{M}^2=0$, from (\ref{eq:intmatrix}) we obtain $3a^2-c^2+14ab=0$, whereas from $\bar{M}\cdot \bar{F}=2$, we obtain that $3a+7b+c=1$. Eliminating $c$, we find $6a^2+a(28b-6)+49b^2-14b+1=0$. Since the discriminant of this equation is negative, this case is excluded. We conclude that $\bar{F}$ is very ample.
\end{proof}

\vskip 4pt

We fix a general polarized $K3$ surface $[Y,\bar{H}]\in \mathcal{F}_5^{\Lambda}$, while keeping the notation from above. Choose a smooth divisor $\bar{Q}\in \bigl|\OO_Y(2\bar{H})\bigr|$ and consider the double cover
\begin{equation}\label{eq:S}
\sigma\colon S\longrightarrow Y
\end{equation}
branched along $\bar{Q}$. We denote by $Q\subseteq S$ the ramification divisor of $\sigma$, hence $\sigma^*(\bar{Q})=2Q$. We set $H:=\sigma^*(\bar{H})$, where $\bar{H}\in \bigl|\OO_Y(1)\bigr|$ is a linear section of $Y$. Note that $Q\in \bigl|\OO_S(H)\bigr|$.

\begin{proposition}
The induced morphism $\varphi_H\colon S\rightarrow \PP^6$ embeds $S$ as a canonical surface which is the complete intersection of $4$ quadrics in $\PP^6$. More precisely, $S$ is a quadratic section of the cone $\mathcal{C}_Y\subseteq \PP^6$ over the $K3$ surface $Y\subseteq \PP^5$.
\end{proposition}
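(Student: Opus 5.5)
We will realize $S$ explicitly inside the cone and then read off the numerics. Write $Y=V(q_1,q_2,q_3)\subseteq \PP^5$ with coordinates $x_0,\dots,x_5$; this uses that $Y$ is a complete intersection of three quadrics. Let $\mathcal{C}_Y\subseteq \PP^6$ be the cone over $Y$ with vertex $v=[0:\cdots:0:1]$, cut out by the same $q_i$ in $\PP^6$ with extra coordinate $t$. Since $\bar{Q}\in|\OO_Y(2\bar{H})|$ and $Y$ is projectively normal (a complete intersection), $\bar{Q}$ is cut out on $Y$ by a quadric $q(x_0,\dots,x_5)$. Set
$$S':=\mathcal{C}_Y\cap V(t^2-q)\subseteq \PP^6.$$
This is a complete intersection of the four quadrics $q_1,q_2,q_3,t^2-q$. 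It does not pass through $v$, since at $v$ we have $t^2-q=1\neq 0$.

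First we identify $S'$ with the double cover. Projection from $v$ restricts to a morphism $S'\to Y$, $[x:t]\mapsto [x]$. Over a point $[x]\in Y$ the fibre is given by $t^2=q(x)$, so the map is $2:1$ and branched exactly along $V(q)\cap Y=\bar{Q}$. The pullback of $\OO_{\PP^5}(1)$ is $\OO_{S'}(1)$, and $\bar{Q}$ is smooth. Hence $S'$ is smooth and equals the double cover of $Y$ branched along $\bar{Q}$ with covering data $\OO_Y(\bar{H})$, which is $S$. Under this identification $\OO_{S'}(1)=\sigma^*\OO_Y(\bar{H})=\OO_S(H)$. The ramification divisor is $\{t=0\}\cap S'$, which recovers $Q\in|\OO_S(H)|$.

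Next we show that this embedding is $\varphi_H$, i.e. that $|H|$ is the complete linear system. We have
$$\sigma_*\OO_S\cong \OO_Y\oplus\OO_Y(-\bar{H}), \qquad \sigma_*\OO_S(H)=\OO_Y(\bar{H})\oplus\OO_Y.$$
Therefore $h^0(S,\OO_S(H))=6+1=7$. The sections are $x_0,\dots,x_5$ together with $t$, where $t$ is the section vanishing on $Q$. So $\varphi_H$ is exactly the inclusion $S'\subseteq\PP^6$.

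Finally, adjunction for a complete intersection of four quadrics in $\PP^6$ gives $\omega_S=\OO_S(-7+8)=\OO_S(1)$. Equivalently, Hurwitz gives $\omega_S=\sigma^*\omega_Y\otimes\OO_S(Q)=\OO_S(H)$. So $S$ is canonical and a quadratic section of $\mathcal{C}_Y$. The main point needing care is the smoothness and identification of $S'$, which rests on $v\notin S'$ and on the smoothness of $\bar{Q}$. Everything else is formal.
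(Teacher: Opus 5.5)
Your proof is correct, but it takes a different route from the paper's.

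\textbf{The paper's route.} The paper argues by dimension count. From $\sigma_*\OO_S\cong\OO_Y\oplus\OO_Y(-\bar H)$ and the projection formula it computes $h^0(S,\OO_S(H))$ and $h^0(S,\OO_S(2H))=24$. It then invokes projective normality of $S\subseteq\PP^6$ to get $h^0(\PP^6,\I_{S/\PP^6}(2))=28-24=4$. Finally it remarks that $S$ clearly lies on $\cC_Y$, so the fourth quadric is one not coming from $H^0(\PP^5,\I_{Y/\PP^5}(2))$.

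\textbf{Your route.} You write that fourth quadric down explicitly as $t^2-q$. You check that $\cC_Y\cap V(t^2-q)$ misses the vertex $v$. On $\cC_Y\setminus\{v\}$, the total space of $\OO_Y(\bar H)$ with fibre coordinate $t$, the equation $t^2=q$ is exactly the standard double cover $\mathrm{Spec}_Y\bigl(\OO_Y\oplus\OO_Y(-\bar H)\bigr)$ with multiplication given by $q$. The same pushforward computation then shows that $x_0,\dots,x_5,t$ span $H^0(S,\OO_S(H))$.

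\textbf{Comparison.} Your version gets the embedding, smoothness, the scheme-theoretic complete-intersection property and the quadratic-section description directly from explicit equations. The paper instead asserts projective normality and that the quadrics cut out $S$ without real justification. The paper's approach gives the count of four quadrics immediately; in your setting it follows from the Koszul resolution of the complete intersection.

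Your count $h^0(S,\OO_S(H))=6+1=7$ is the correct one. The paper's ``$=6$'' is a slip, as its own decomposition $H^0(Y,\OO_Y(\bar H))\oplus\mathbb C\langle Q\rangle$ shows.
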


\begin{proof}
From the adjunction formula we find $\omega_S=\OO_S(Q)=\OO_S(H)$. Furthermore, we have $\sigma_*(\OO_S)=\OO_Y\oplus \OO_Y(-H)$, hence from the projection formula we can write
$$H^0\bigl(S,\OO_S(H)\bigr)\cong H^0\bigl(Y, \OO_Y(\bar{H})\bigr)\oplus H^0\bigl(Y,\OO_Y\bigr)\cong H^0\bigl(Y, \OO_Y(\bar{H})\bigr)\oplus \mathbb C\langle Q\rangle,$$
where recall that $Q\in \bigl|\OO_S(H)\bigr|$, as well as
$$H^0\bigl(S,\OO_S(2H)\bigr)\cong H^0\bigl(Y, \OO_Y(2\bar{H})\bigr)\oplus H^0\bigl(Y,\OO_Y(\bar{H})\bigr)\cdot Q.$$

Thus $h^0\bigl(S, \OO_S(H)\bigr)=6$ and $h^0(S,\OO_S(2H))=h^0(Y,\OO_Y(2))+h^0(Y,\OO_Y(1))=2+2\bar{H}^2+6=24$. Furthermore,  $S\subseteq \PP^6$ is projectively normal, so
$h^0\bigl(\PP^6, \I_{S/\PP^6}(2)\bigr)=4$. Since clearly $S\subseteq \cC_Y$, it follows that $S$ can be viewed as a quadratic section of the cone $\cC_Y$, precisely the intersection of $\cC_Y$ with one of the quadrics containing $S$ not lying in the subsystem $\bigl|\sigma^* H^0\bigl(\PP^5, \I_{Y/\PP^5}(2)\bigr)\bigr|$.
\end{proof}

%\vskip 3pt

We are now in a position to prove Theorem \ref{prop:ample}. We denote by $\mathrm{Hilb}_{15,19}$ the unique component of the Hilbert scheme of curves $C\subseteq \PP^6$ of genus $15$ and degree $19$ dominating $\cM_{15}$. A general point of $\mathrm{Hilb}_{15,19}$ corresponds to a smooth projectively normal curve $C\subseteq \PP^6$ such that the canonical surface $S$ defined by (\ref{cansurf}) is smooth.

\vskip 4pt

\noindent \emph{Proof of Theorem \ref{prop:ample}.} We choose a $K3$ surface $[Y, \OO_Y(\bar{H}]\in \F_5^{\Lambda}$ with $\mbox{Pic}(Y)=\mathbb Z\langle \bar{H}, \bar{F}, \bar{R}\rangle$, where the intersection matrix is given as in Definition \ref{lattice3}. The restriction map
$$H^0\bigl(Y, \OO_Y(2\bar{H})\bigr)\longrightarrow H^0\bigl(\bar{R}, \OO_{\bar{R}}(2\bar{H})\bigr)$$  being surjective, we can choose a smooth curve
$\bar{Q}\in \bigl|\OO_Y(2\bar{H})\bigr|$ which is \emph{tangent} to $\bar{R}$, that is, $\bar{Q}\cdot \bar{R}=2y$, for a point $y\in Y$. Construct the double cover $\sigma\colon S\rightarrow Y$ defined in (\ref{eq:S}).
The pull-back $\sigma^*(\bar{R})$ is then a double cover of $\bar{R}$ branched over the single point $y$, hence necessarily
$$\sigma^*(\bar{R})=R+R'\subseteq S,$$
where $R$ and $R'$ are lines on $S\subseteq \PP^6$ meeting at a single point. Next, we choose a smooth genus $3$ curve $\bar{F}\subseteq Y$ general in its linear system and set
$$C':=\sigma^*(\bar{F})\subseteq S.$$
Since $\bar{F}\cdot \bar{Q}=2\bar{F}\cdot \bar{H}=18$, we obtain that $C'$ is a smooth curve of genus $14$ and degree $18$ endowed with the double cover $C'\rightarrow \bar{F}$. Note that the linear system $$\bigl|\OO_S(C')\bigr|=\pi^*\bigl|\OO_Y(\bar{F})\bigr|$$ is $3$-dimensional. Applying Theorem \ref{fample}, since $\OO_Y(\bar{F})$ is ample and $\sigma$ is finite, we obtain that $\OO_S(C')$ is ample as well. Observe that $C'\cdot R=\bar{F}\cdot \bar{R}=2$. Choosing $\bar{F}$ general in its linear system, we can arrange the intersection of $R$ and $C'$ to be transverse, therefore
\begin{equation}\label{eq:Cdeg}
C:=C'+R\subseteq S\subseteq \PP^6
\end{equation}
is a nodal curve of genus $15$ and degree $19$. Note that the linear system $\bigl|\OO_S(C)\bigr|$ has $R$ as a fixed component, and
$\bigl|\OO_S(C)\bigr|=R+\pi^*\bigl|\OO_Y(\bar{F})\bigr|$.

\vskip 4pt

Despite the fact that $\bigl|\OO_S(C)\bigr|$ is not ample, we can complete the proof of Theorem \ref{prop:ample}. Indeed, let us pick a general family $\bigl\{[C_t\hookrightarrow \PP^6]\bigr\}_{t\in T}\subseteq  \mathrm{Hilb}_{15,19}$ over a pointed base $( T, o)$, whose fibre over $o\in T$ is the curve $C$ described in (\ref{eq:Cdeg}). If $S_t=\mathrm{Bs} \bigl|\I_{C_t/\PP^6}(2)\bigr|$, assume the line bundle $\OO_{S_t}(C_t)$ is not ample for each $t\in T$. As we have already observed, we may assume that $\OO_{S_t}(C_t)$ is nef for all $t\in T$ and we denote by $f_t\colon S_t\rightarrow \PP^2$ the map induced by the linear system $\bigl|\OO_{S_t}(C_t)\bigr|$ for $t\in T\setminus \{o\}$. The limiting map of this family
$$f_o\colon S\longrightarrow \PP^2,$$ satisfies $f_o^*\bigl(\OO_{\PP^2}(1)\bigr)=\OO_S(R+C')$ and is induced by a subspace of sections $\sigma^*(V)$, where  $V\subseteq H^0\bigl(Y, \OO_Y(\bar{F})\bigr)$ is $3$-dimensional. By assumption, there exists a family of curves $\Gamma_t\subseteq S_t$ such that $\Gamma_t\cdot C_t=0$.  We denote by $\Gamma_o\subseteq S$ the limiting curve of $\Gamma_t$, therefore $\Gamma_o\cdot (C'+R)=0$.
We write $\Gamma_o=G+mR$, where $m\geq 0$ and $G\subseteq S$ is a curve not having $R$ in its support. From the adjunction formula, we find $R^2=-3$. Since $R\cdot C'=2$, it follows that $R\cdot (C'+R)=1$,  thus $G\neq 0$. Furthermore, the morphism $f_o$ contracts $G$, which we argue,  leads to a contradiction. Indeed, $f_o$ admits a factorization
$$\xymatrix{
    S \ar[r]^{\sigma}  \ar@/^2pc/[rrr]^{f_o} &   Y  \ar[r]^{|\bar{F}|}  &  \PP^3  \ar[r]^p & \PP^2 \\
   }
$$
where $p\colon \PP^3\rightarrow \PP^2$ is the linear projection corresponding to $V\subseteq H^0\bigl(Y, \OO_Y(\bar{F})\bigr)$. Since $\sigma$ is finite and $|\bar{F}|$ is very ample, it follows that $\sigma(G)$ must be contracted by the projection $p$, that is, $\sigma(C)$ is a line in $\PP^3$. By inspecting the intersection matrix (\ref{eq:intmatrix}) of $\mbox{Pic}(Y)$ we immediately see that no such line can exist on $Y$, which finishes the proof.
\hfill $\Box$

\section{The uniruledness of the boundary divisor $\Delta_0$ in $\mm_{16}$}

We now lift the construction discussed above from $\mm_{15}$ to the moduli space $\mm_{15,2}$ of $2$-pointed stable curves of genus $15$ and eventually to $\mm_{16}$. Recall that  $\mathrm{Hilb}_{15,19}$ is the component of the Hilbert scheme of curves $C\subseteq \PP^6$ of genus $15$ and degree $19$ dominating $\cM_{15}$.  We denote by $\mathrm{Hilb}_{2,2,2,2}$ the Hilbert scheme of complete intersections of $4$ quadrics in $\PP^6$. Since $\mathrm{Hilb}_{15,19}\dblq PGL(7)$ is birational to the Hurwitz space $\mathcal{H}_{15,9}$, we have a rational map
\begin{equation}\label{eq:P2}
\chi \colon \mathcal{H}_{15,9}\dashrightarrow \mathrm{Hilb}_{2,2,2,2}\dblq PGL(7), \ \ \ \ [C,A]\mapsto S:=\mathrm{Bs}\bigl|\mathcal{I}_{C/\PP^6}(2)\bigr| \ \ \ \mbox{mod } PGL(7),
\end{equation}
where the canonical surface $S\subseteq \PP^6$ is defined by (\ref{cansurf}).  We set
\begin{equation}\label{def:S}
\mathcal{S}:=\chi(\mathcal{H}_{15,9}).
\end{equation}

The general fibre of the morphism $\chi\colon \H_{15,9}\rightarrow \cS$ consists of finitely many linear nonempty open subsets of linear systems
$\bigl|\OO_S(C)\bigr|$, where $C\subseteq S\subseteq \PP^6$ is a smooth curve of genus $15$ and degree $19$. In particular, $\cS$ is an irreducible variety of dimension $41=\mathrm{dim}(\H_{15,9})-2$. Recall that $\pi\colon \H_{15,9}\rightarrow \cM_{15}$ denotes the forgetful map. The next observation will prove to be useful in several moduli counts.

\begin{proposition}\label{dims}
If $\cS'$ is an irreducible subvariety of $\cS$ of dimension $\mathrm{dim}(\cS')\leq 39$, then $\pi\bigl(\chi^{-1}(\cS')\bigr)$ is a proper subvariety of $\cM_{15}$.
\end{proposition}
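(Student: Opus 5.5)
The plan is a dimension count on the fibres of $\chi$ and of $\pi$. Set $\cZ:=\chi^{-1}(\cS')\subseteq \H_{15,9}$; we may replace it by any irreducible component dominating $\cS'$. First we bound $\mathrm{dim}(\cZ)$. By the description preceding the statement, the general fibre of $\chi$ over a point $[S]\in \cS$ is a finite union of open subsets of linear systems $\bigl|\OO_S(C)\bigr|$. These linear systems are $2$-dimensional, since $h^0(S,\OO_S(C))=3$. Here we also use that the residual pencil $A=\OO_C(C)$ is recovered from $C\subseteq S$ by adjunction, so $[C,A]$ is determined by the curve $C$ on $S$.

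We need this fibre bound over \emph{every} point of $\cS'$, not only over a general point of $\cS$. To get it, we note the following for any $[C,A]$ in the domain of definition of $\chi$ with image $S$:
\begin{itemize}
\item $S$ is a complete intersection canonical surface, hence regular;
\item $\OO_C(C)\cong A$ still holds by adjunction;
\item the sequence (\ref{ex_seq}) gives $h^0(S,\OO_S(C))=1+h^0(C,A)=3$.
\end{itemize}
The fibre of $\chi$ over $[S]$ is then contained in the union of the systems $\bigl|\OO_S(C)\bigr|$ for classes $\OO_S(C)\in \mathrm{Pic}(S)$ with $C^2=9$ and $C\cdot K_S=19$. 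Since $q(S)=0$, the Picard scheme is discrete, so these classes form a countable set. Each class contributes a family of dimension at most $2$, with the caveat that a jump $h^0(C,A)\geq 3$ would only occur on a proper closed subset and can be absorbed by semicontinuity. Hence every fibre of $\chi_{|\cZ}$ has dimension at most $2$, and
$$\mathrm{dim}(\cZ)\leq \mathrm{dim}(\cS')+2\leq 41.$$

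Next we use the fibres of $\pi$. Since $\mathrm{dim}(\H_{15,9})=43=\mathrm{dim}(\cM_{15})+1$, a general fibre of $\pi$ is $1$-dimensional. Its dimension is exactly $\rho(15,1,9)=1$, and every component of $W^1_9(C)$ has dimension at least $\rho$. Suppose, for contradiction, that $\pi(\cZ)$ is dense in $\cM_{15}$, which has dimension $42$. Then
$$\mathrm{dim}(\cZ)\geq 42+\mathrm{dim}\bigl(\text{general fibre of } \pi_{|\cZ}\bigr).$$
If this fibre dimension is $\geq 1$, we get $\mathrm{dim}(\cZ)\geq 43$, contradicting $\mathrm{dim}(\cZ)\leq 41$. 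If it is $0$, we still get $\mathrm{dim}(\cZ)\geq 42>41$, again a contradiction.

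Therefore $\pi(\cZ)$, whose closure is irreducible, is contained in a proper subvariety of $\cM_{15}$. The proof in fact gives a margin: the bound $\mathrm{dim}(\cS')\leq 40$ would already suffice.

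The main obstacle is making the uniform fibre bound for $\chi$ rigorous over special points of $\cS'$. There the surface $S$ may be singular, or $h^0(C,A)$ may jump. For singular $S$, the plan is to work only with those $[C,A]$ where $\chi$ is defined, so that $S$ is still a $(2,2,2,2)$ complete intersection containing $C$. In that case regularity follows from Koszul cohomology, and the count $h^0(\OO_S(C))=3$ from (\ref{ex_seq}) goes through with $\OO_S(C)$ interpreted via the ideal sheaf of $C$ in $S$.
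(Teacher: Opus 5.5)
Your argument is the paper's own one-line count, $\dim\chi^{-1}(\cS')\le\dim(\cS')+2\le 41<42=\dim(\cM_{15})$. You add a legitimate worry about the fibres of $\chi$ being at most $2$-dimensional over special points of $\cS'$; the paper leaves this implicit, and it is harmless in the paper's applications, where $S$ is smooth. One correction: your closing claim that $\dim(\cS')\le 40$ would suffice does not follow, since then you only get $\dim(\cZ)\le 42$ and your fibre-dimension-zero case yields no contradiction (a $42$-dimensional subvariety of $\H_{15,9}$ can dominate $\cM_{15}$ with finite general fibres).
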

\begin{proof}
Since $\mathrm{dim}\bigl(\chi^{-1}(\cS')\bigr)\leq \mathrm{dim}(\cS')+2\leq 41=\mathrm{dim}(\cM_{15})-1$, the claim follows.
\end{proof}

\vskip 4pt

Let us now take a general curve $C$ of genus $15$ and consider the correspondence
$$\Sigma:=\Bigl\{(A, x+y)\in W^1_9(C)\times C_2: H^0(C, A(-x-y))\neq 0\Bigr\},$$
endowed with the projections $\pi_1\colon \Sigma\rightarrow W^1_9(C)$ and $\pi_2\colon \Sigma\rightarrow C_2$ respectively. Here $C_2$ is the second symmetric product of $C$.  It follows that $\Sigma$ is an irreducible surface and that $\pi_2$ is generically finite. Indeed, for a general point $2x\in C_2$, we can invoke for instance
\cite[Theorem 1.1]{EH} to conclude that $\pi_2^{-1}(2x)$ is finite. The fibre $\pi_1^{-1}(A)$ is irreducible whenever $A$ has simple ramification.

\vskip 4pt

We now fix a general element $[C, x,y]\in \mm_{15,2}$. Then there exist finitely many pencils $A\in W^1_9(C)$ containing both points $x$ and $y$ in the same fibre. Each of these pencils $A$ may be assumed to be base point free with simple ramification and general enough such that $L:=\omega_C\otimes A^{\vee}\in W^6_{19}(C)$ is very ample and in the embedding
$$\varphi_L\colon C\hookrightarrow \PP^6$$
the curve $C$ lies on precisely $4$ independent quadrics intersecting in a smooth canonical surface $S$ defined by (\ref{cansurf}).

\begin{proposition}\label{penc2} 
With the notation above, if $h^0\bigl(C, A(-x-y)\bigr)=1$, then $\mathrm{dim} \ \bigl|\mathcal{I}_{\{x,y\}}(C)\bigr|=1$.
\end{proposition}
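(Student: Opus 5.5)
The plan is to use the exact sequence (\ref{ex_seq}) twisted by the ideal of the two points. Since $\OO_C(C)\cong A$, twisting (\ref{ex_seq}) by $\I_{\{x,y\}/S}$ gives
\[
0\longrightarrow \OO_S\longrightarrow \I_{\{x,y\}/S}(C)\longrightarrow A(-x-y)\longrightarrow 0,
\]
where exactness on the left holds because $x,y\in C$, so every section of $\OO_S$ multiplied by the section defining $C$ vanishes at $x$ and $y$. More concretely, $\I_{\{x,y\}/S}(C)$ is the kernel of $\OO_S(C)\to \OO_{\{x,y\}}$, and this map factors through the restriction $\OO_S(C)\to \OO_C(C)=A$. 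The kernel of $\OO_S(C)\to A(-x-y)\oplus$ (nothing) is therefore still $\OO_S$, and the cokernel structure is $A\to A_{|x+y}$. In other words, the subsheaf of $\OO_S(C)$ of sections vanishing at $x,y$ is the preimage of $A(-x-y)\subseteq A$.

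Taking cohomology and using that $S$ is regular, so $h^1(S,\OO_S)=0$ as recalled in the summary of \cite{BV}, I get
\[
h^0\bigl(S,\I_{\{x,y\}/S}(C)\bigr)=h^0(S,\OO_S)+h^0\bigl(C,A(-x-y)\bigr)=1+1=2,
\]
using the hypothesis $h^0(C,A(-x-y))=1$. Hence $\dim \bigl|\I_{\{x,y\}}(C)\bigr|=1$.

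An alternative formulation makes the same point. The restriction $H^0(S,\OO_S(C))\to H^0(C,A)$ is surjective with kernel spanned by the equation of $C$; this follows from regularity and the count $h^0=3$. A section vanishes at $x,y$ if and only if its image lies in $H^0(A(-x-y))\subseteq H^0(A)$, which is one-dimensional. The preimage of this subspace is then $2$-dimensional. There is no real obstacle here. The only point needing care is that the section cutting out $C$ itself automatically vanishes at $x$ and $y$, so it lies in the subsystem. This is what makes the dimension $1$ rather than $0$.
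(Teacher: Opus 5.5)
Your proof is correct and is essentially the paper's argument: the paper compares $H^0(S,\I_{\{x,y\}}(C))\subseteq H^0(S,\OO_S(C))$ with $H^0(C,A(-x-y))\subseteq H^0(C,A)$ via restriction to $C$, using $h^0(S,\OO_S(C))=3$, which amounts to your twisted sequence $0\to\OO_S\to\I_{\{x,y\}/S}(C)\to A(-x-y)\to 0$ together with $H^1(S,\OO_S)=0$. Your version is in fact more accurate on one point: the map $H^0(S,\I_{\{x,y\}}(C))\to H^0(C,A(-x-y))$ is surjective with kernel spanned by the equation of $C$ (it is not injective, as the paper's text says), and this one-dimensional kernel is exactly what makes the dimension $1$.
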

\begin{proof} It  follows from the commutativity of the following diagram, keeping in mind that $h^0(S, \OO_S(C))=3$ and that the first column is injective.
$$
 \xymatrix{
         0 \ar[r] & H^0\bigl(S, \mathcal{I}_{\{x,y\}}(C)\bigr) \ar[r]^{} \ar[d]_{} & H^0\bigl(S, \OO_S(C)\bigr) \ar[d]_{\mathrm{res}} \ar[r]^{} & H^0\bigl(\OO_{\{x,y\}}(C)\bigr) \ar[d]_{=}  & \\
          0 \ar[r] & H^0\bigl(C,A(-x-y)\bigr) \ar[r] & H^0(C, A) \ar[r]      & H^0\bigl(\OO_{\{x,y\}}(C)\bigr)   & }
$$
\end{proof}

We now introduce the moduli map of the pencil introduced in Proposition \ref{penc2}
\begin{equation}\label{penc3}
m\colon \PP=\bigl|\mathcal{I}_{\{x,y\}}(C)\bigr|\longrightarrow \mm_{15,2},
\end{equation}
where the marked points of the pencil are the base points $x$ and $y$ respectively.
Composing $m$ with the clutching map
$\mm_{15,2}\twoheadrightarrow \Delta_0\subseteq \mm_{16}$, we obtain a pencil $\xi\colon \PP \rightarrow \Delta_0$. We set
\begin{equation}\label{sweep2}
R:=m_*(\PP)\subseteq \mm_{15,2}   \ \ \mbox{ and  } \ \ \Gamma:=\xi_*(\PP)\subseteq \mm_{16}.
\end{equation}

\vskip 4pt

\begin{proposition}\label{prop:irr}
Every curve inside the pencil $\Gamma \subseteq \mm_{16}$ corresponds to a nodal curve which does not belong to any of the boundary divisors $\Delta_1, \ldots, \Delta_8$ .
\end{proposition}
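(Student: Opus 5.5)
The plan is to study the members of the pencil $\PP=\bigl|\mathcal{I}_{\{x,y\}}(C)\bigr|$ on the smooth canonical surface $S\subseteq \PP^6$. A member $C_t$ of the pencil maps under $\xi$ to the curve obtained from $C_t$ by identifying $x$ and $y$. This image lies in $\Delta_i$ for some $i\geq 1$ only if the stable model of $[C_t,x,y]$ has a separating node. That can happen in two ways. Either $C_t$ is reducible with a piece of arithmetic genus between $1$ and $14$ attached at a separating node and not containing both marked points, or $C_t$ is singular at $x$ or $y$ so that a rational tail bubbles off, or $C_t$ contains a rational tail. So it suffices to show two things: every $C_t$ is a nodal curve smooth at $x$ and $y$, and no $C_t$ splits as $C_1\cup C_2$ with $C_1\cdot C_2=1$ and both components of positive genus or with a separating node away from a component containing $x,y$.

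First, since $x\neq y$ are base points of a pencil in $|\OO_S(C)|$ and the general member $C$ is smooth through them, the members are smooth at $x,y$: at a base point, the members of a pencil whose general member is smooth there have distinct tangent directions except for one, and all pass through the point with multiplicity one because $C\cdot C=A$ restricted gives $(C\cdot C)_x=1$ for the general member. Thus the local intersection multiplicity at $x$ of two members is $1$, which forces every member to be smooth at $x$ (and at $y$). Next, I would use the generality of $[C,x,y]$ and dimension counts (Proposition \ref{dims}) to ensure the pencil is a general pencil of the type considered, so that its singular members have only nodes. Concretely, the locus of surfaces $S\in\cS$ and pencils containing a member with worse singularities, or with a reducible member, has codimension at least $2$ in the relevant parameter space, so it does not dominate $\cM_{15,2}$.

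The main step, and I expect the obstacle, is excluding reducible members $C_t=C_1+C_2$ with a separating node. Here the key numerical input is that $\OO_S(C)$ is ample by Theorem \ref{prop:ample}, with $C^2=9$ and $C\cdot K_S=19$. I would argue as follows. If $C_1\cdot C_2=1$, the Hodge index theorem applied to $C_1,C_2$ with $C_1+C_2=C$ ample restricts the classes severely. Combined with $K_S=H$ very ample (so each $C_i$ has positive degree) and adjunction $2p_a(C_i)-2=C_i^2+H\cdot C_i$, this bounds the possible splittings. The remaining cases would be excluded by a parameter count: curves on $S$ in the classes of $C_1$ and $C_2$ form families whose dimension, added to $\dim \cS$ and to the choice of the pencil through $x,y$, is at most $\dim\cM_{15,2}-1$. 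Hence such a splitting cannot occur in the pencil through a general $[C,x,y]$. Rational tails are excluded in the same way, since a smooth rational curve $R'\subseteq S$ with $R'\cdot(C-R')=1$ would give $R'^2=-2-H\cdot R'$. That is a rigid $(-3)$-type curve or worse, and its presence is again a codimension-one condition on $\cS$, hence not dominant by Proposition \ref{dims}.
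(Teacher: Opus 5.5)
\textbf{The gap: nodality of every member is never proved.} Your reduction is sound. A member $C_t$ that is nodal, smooth at $x,y$ and $2$-connected gives a curve $C_t/(x\sim y)$ with no separating node. Your argument for smoothness at the base points also works: any two members meet at $x$ with multiplicity one, because the divisor of $|A|$ through $x+y$ is reduced at $x$.

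The problem is that nodality of \emph{all} members is only asserted, through an unproved ``codimension at least $2$'' claim, and this is exactly the content of the paper's proof. It cannot come from general-position heuristics. $\bigl|\OO_S(C)\bigr|$ is a special net, and its singular members form a curve $J$. If a whole component of $J$ consisted of non-nodal curves, then every pencil in the net, including $\bigl|\mathcal{I}_{\{x,y\}}(C)\bigr|$ for any choice of $x,y$, would contain one.

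Your codimension claim is also misdirected. In the space of pencils the bad locus is typically a divisor, since flex tangents of the branch curve pull back to cuspidal members. What is actually needed is that each net has only finitely many non-nodal members.

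The paper obtains this finiteness from the finite degree $9$ map $f\colon S\rightarrow \PP^2$ of Theorem \ref{prop:ample}, whose members are pullbacks of lines. The discriminant $J$ consists of two kinds of pieces:
\begin{enumerate}
\item the dual $B^{\vee}$ of the branch curve, whose general member is one-nodal because $B$ is reduced (a consequence of the simple ramification of $A$);
\item possible pencils $\PP_b$ of curves all singular at a fixed point $z$ with $df_z=0$, of multiplicity $m\in\{2,3\}$ since $m^2\leq C^2=9$.
\end{enumerate}
The pencils $\PP_b$ are excluded by dimension counts through Hurwitz spaces of the normalizations ($\mathcal{H}^{\mathrm{triple}}_{12,9}$ and $\mathcal{H}^{\mathrm{four}}_{14,9}$), combined with Proposition \ref{dims}. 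Only after this does the pencil of lines through the general point $f(x)=f(y)$ avoid all non-nodal members. None of this structure, nor the map $f$ itself, appears in your proposal.

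\textbf{Secondary issues in the step you flagged as the main obstacle.} That step is the easy one, and your treatment of rational tails misuses Proposition \ref{dims}. That proposition needs $\dim\cS'\leq 39$, i.e.\ codimension at least $2$ in the $41$-dimensional $\cS$, so a codimension-one condition on $\cS$ proves nothing. Surfaces in $\cS$ carrying lines of self-intersection $-3$ do occur, as in the proof of Theorem \ref{prop:ample}.

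No parameter count is needed here at all. A smooth rational component $R'$ with $R'\cdot(C_t-R')=1$ has $R'\cdot C=R'^2+1=-1-H\cdot R'<0$, contradicting the ampleness of $\OO_S(C)$.

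Similarly, your Hodge index computation for $C_t=C_1+C_2$ with $C_1\cdot C_2=1$ forces, say, $C_1^2=0$ and $C_1\cdot C=1$. Then $C_1$ is integral, and $f$ maps it finitely and birationally, hence isomorphically, onto a line. So $C_1\cong\PP^1$, and adjunction gives $H\cdot C_1=-2$, which is absurd. The paper simply quotes a $2$-connectedness theorem for ample linear systems at this point.
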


\begin{proof}
Keeping the notation above, for a generic choice of $(A, x+y)\in \Sigma$, the pencil $$\PP:=\bigl|\mathcal{I}_{\{x,y\}}(C)\bigr|$$ corresponds to a generic line inside $\bigl|\OO_S(C))\bigr|$. As pointed out in Theorem \ref{prop:ample}, $\bigl|\OO_S(C)\bigr|$ is base point free and ample on the surface $S$ defined by (\ref{cansurf}), giving rise to the finite map
$$
f\colon S\longrightarrow \PP^2=\bigl|\OO_S(C)\bigr|^{\vee}
$$
considered in (\ref{finitemap}). We show that the inverse image $\PP$ under $f$ of a general pencil of lines in $\bigl|\OO_S(C)\bigr|^{\vee}$ consists only of integral curves with at most one node. This is achieved in several steps.

\vskip 5pt

\noindent {\bf(i)} Since $\OO_S(C)$ is ample, we can apply \cite[Theorem A]{BL} and conclude that each curve $C'\in \bigl|\OO_S(C)\bigr|$ is $2$-connected, that is, it cannot be written as a sum of effective divisors $C'=F+M$, where $F\cdot M\leq 1$. This implies that $\bigl|\OO_S(C)\bigr|$ does not contain any \emph{tree-like} curves, that is, curves for which its irreducible components meet at a single point, which furthermore is a node.

%We rule out the possibility that $\PP$ contains a reducible $1$-nodal curve $C'=F+M$, where $F$ and $M$ are integral smooth curves on $S$ such that $F\cdot M = 1$. Assume this is the case. By the Hodge Index Theorem,  $F^2\cdot M^2<1$. Assume $F^2 = 0$. For a general element $C'\in \PP$, we have $F\cdot C' = 1$. Moreover a general $C'$ intersects  $F$ at one point
%$q$ such that $\mathcal O_F(C') \cong \mathcal O_F(q)$. If $h^0(F, \mathcal O_F(q)) = 1$, then it is easily seen that $q$ is a base point of $\bigl|\OO_S(C)\bigr|$, therefore $q$ is a base point
%of $|A|=\bigl|\OO_C(C)\bigr|$. This contradicts the generality of  $C$, which has gonality $9$. If, on the other hand, $h^0(F, \mathcal O_F(q)) \geq 2$, then $F$ is a smooth rational curve. Since $F^2 = 0$, it follows $F\cdot K_S = -2$, contradicting  the very ampleness of
%$K_S$. The same argument works if $M^2 = 0$, hence we may assume $F^2\cdot M^2 \neq 0$.

%\vskip 3pt

 %Let $F^2 \leq -2$ then $F\cdot C' \leq -1$ and $F$ is a
%fixed component of $\bigl|\OO_S(C)\bigr|$, a contradiction again. Finally, assume $F^2 = -1$ and then $F\cdot C' = 0$. Choose a general point $z\in F\setminus M$. Since $\bigl|\OO_S(C)\bigr|$ is base point free, it follows that $\bigl|\mathcal{I}_{\{z\}}(C)\bigr|$ is a pencil. Each curve of this pencil must contain $F$, thus  that $|M|=|C-F|$ is a
%pencil and  $h^0(F, \mathcal O_F(M))=2$. Hence $F$ is an exceptional line on the minimal surface $S$, a contradiction. Summarizing, $\PP$ cannot contain any curves of compact type.

\vskip 6pt

\noindent {\bf{(ii)}}  The essential step in our argument involves proving that $\PP$ contains no curves with singularities worse than nodes. Precisely, we show that $\bigl|\OO_S(C)\bigr|$ contains only \emph{finitely many} non-nodal curves.  Note first that the \emph{branch curve} $B\subseteq \PP^2$ of $f$ is reduced, else we contradict the assumption that the pencil $A\in W^1_9(C)$ on $C$ has simple ramification.  We introduce the discriminant curve $$J:=\Bigl\{C'\in \bigl|\OO_S(C)\bigr|: C' \mbox{ is singular}\Bigr\}.$$

The dual curve $B^{\vee}$ is contained in $J$. Since $B$ is reduced, the general tangent line to $B$ is tangent at exactly one point $p\in B$ and with multiplicity $2$. A standard local calculation shows that $f^*\bigl(\mathbb T_p(B)\bigr)\in |\OO_S(C)|$ is a one-nodal curve, singular at exactly one point $z \in f^{-1}(p)$ such that the differential $df_z\colon T_z(S)\rightarrow T_p(\PP^2)$ is not an isomorphism.

\vskip 4pt

The complement
$J\setminus B^{\vee}$ is the (possibly empty) union of (some of) the pencils $\mathbf P_b$, where  $b \in B_{\mathrm{sing}}$ and $\PP_b$ is defined as
the pull-back by $f$ of the pencil of lines in $\PP^2$ through $b$. In view of the numerical situation at hand (that is, $C^2=9$), the geometric possibilities for such a pencil $$\PP_b \subseteq J$$ are quite constrained.  Since $f$ is finite, the pencil $\mathbf P_b$ has no fixed components. Let $Z:=\mathrm{Bs}(\PP_b)$. Then a general $C' \in \PP_b$ is integral and smooth along $C'\setminus Z$. Moreover, each $C' \in \PP_b$ is singular at a given point $z \in Z$ and a general such $C'$  has multiplicity $m\geq 2$ at $z$. Necessarily, the differential $df_z\colon T_z(S)\rightarrow T_p(\PP^2)$ is zero. Since $m^2 \leq C^2=(C')^2 = 9$, we find $m\in \{2, 3\}$.  Let
$$
\sigma\colon S' \longrightarrow S
$$
be the blow-up of $S$ at $z$ and denote by $E\subseteq S'$ the exceptional divisor. The pencil
 $\bigl|\mathcal O_{S'}(\sigma^*C -  mE)\bigr|$
is the strict transform of $\PP_b$. Observe that the restriction map
$$
r\colon H^0\bigl(S', \mathcal O_{S'}(\sigma^*C - mE)\bigr) \longrightarrow H^0\bigl(E, \mathcal O_{E}(m)\bigr)
$$
is not zero, hence $\mathrm{Im}(r)$ defines a linear series $\mathfrak{p}_b$ on $E\cong \PP^1$. Either $\mathfrak{p}_b$ is a pencil or a constant divisor of degree $m\in \{2, 3\}$.  We now list the possibilities for the pencil $\PP_b$.

\vskip 4pt

\noindent {\bf{(P1)}} If $m = 3$, then $\mbox{supp}(Z)=\{z\}$. Every curve $C'\in \PP_b$ has a triple point  at $z$.

\vskip 3pt

\noindent {\bf{(P2)}} If $m = 2$, then either each $C'\in \PP_b$ has a node, or else, each $C'\in \PP_b$ has a cusp at $z$. Indeed, if $\mathfrak{p}_b$ is a pencil on $E$, then each $C'\in \PP_b$ is nodal at $z$.  If $\mathfrak{p}_b=\{u_1+u_2\}$ consists of a fixed divisor, then $\PP_b$ contains a unique curve $C_z$ having multiplicity at least $3$ at $z$. If $u_1\neq u_2$, all other curves $C'\in \PP_b\setminus \{C_z\}$ are nodal at $z$, whereas if $u_1=u_2$, then all such $C'$ are cuspidal at $z$.

\vskip 5pt

Both possibilities {\bf{(P1)}} and {\bf{(P2)}} can be ruled out by a parameter count that contradicts the generality of the pair $(C,A)\in \mathcal{H}_{15,9}$ we started with. We first rule out {\bf{(P1)}}. Assume $C'\in \PP_b$ has a triple point at $z$ and no further singularities and denote by $\nu\colon \bar{C}\rightarrow C'$ the normalization. Set $\{z_1, z_2, z_3\}=\nu^{-1}(z)$ and $\bar{A}:=\nu^*\bigl(\OO_{C'}(C'))\in W^1_9(\bar{C})$. Since $\bar{A}$ is induced from a pencil of curves with a triple point at $z$, it follows that $\bigl|\bar{A}(-3z_1-3z_2-3z_3)\bigr|\neq
\emptyset$, therefore for degree reason $\bar{A}=\OO_{\bar{C}}(3z_1+3z_2+3z_3)$. We denote by $\H_{12,9}^{\mathrm{triple}}$ the Hurwitz space classifying degree $9$ covers
$\bar{C}\rightarrow \PP^1$ having a divisor of the form $3(z_1+z_2+z_3)$ in a fibre, where $\bar{C}$ is of genus $12$. Then $\H_{12,9}^{\mathrm{triple}}$ is pure of dimension
$\mathrm{dim}(\cM_{12})-1=32$. Let $\mathcal{Y}_{1}$ be the parameter space of pairs $(S, C')$, where $S\subseteq \PP^6$ is a smooth complete intersection of $4$ quadrics and $C'\subseteq S$ is an integral curve of arithmetic genus $15$ with a triple point as described by {\bf{(P1)}}. Let
$$
\begin{CD}
\cS @<\pi_1<< \mathcal{Y}_1 @>\pi_2>> \H_{12,9}^{\mathrm{triple}} \\
\end{CD}
$$
be the projections given by $\pi_1\bigl([S,C']\bigr):=[S]$ and $\pi_2\bigl([S,C']):=[\bar{C}, \bar{A}]$ respectively. With the notation above, from the adjunction formula $\nu^*(\OO_{C'}(1))=\omega_{\bar{C}}(-z_1-z_2-z_3)$. The fibre $\pi_2^{-1}\bigl(\pi_2([S,C'])\bigr)$ corresponds then to the choice of a $7$-dimensional space of sections $V\subseteq H^0\bigl(\bar{C}, \omega_{\bar{C}}(-z_1-z_2-z_3)\bigr)$ satisfying
$\mbox{dim}\Bigl(V\cap H^0\bigl(\omega_{\bar{C}}(-2z_1-2z_2-2z_3)\bigr)\Bigr)\geq 6$. Since $h^0\bigl(\omega_{\bar{C}}(-2z_1-2z_2-2z_3)\bigr)=6$, it follows that
$$\frac{V}{H^0\bigl(\omega_{\bar{C}}(-2z_1-2z_2-2z_3)\bigr)}\in \PP\Bigl(\frac{H^0\bigl(\omega_{\bar{C}}(-z_1-z_2-z_3)\bigr)}{H^0\bigl( \omega_{\bar{C}}(-2z_1-2z_2-2z_3)\bigr)}\Bigr)\cong \PP^2.$$
Therefore $\mathrm{dim}(\mathcal{Y}_1)=\mbox{dim}(\H_{12,9}^{\mathrm{triple}})+2=34\leq 39$, so we can invoke Proposition \ref{dims} to conclude that $\overline{\pi_1(\mathcal{Y}_1)}\neq \cS$ and rule out possibility {\bf{(P1)}}.

 \vskip 5pt

Next we rule out possibility {\bf{(P2)}}, focusing on the case when each $C'\in \PP_b$ is cuspidal at $z$. Passing to the normalization $\nu\colon \bar{C}\rightarrow C'$, setting $\bar{z}:=\nu^{-1}(z)$ we obtain that $\bar{A}:=\nu^*(\OO_{C'}(C')\in W^1_9(\bar{C})$ verifies $h^0\bigl(\bar{C}, \bar{A}(-4\bar{z})\bigr)\geq 1$. Let $\H_{14,9}^{\mathrm{four}}$ be the Hurwitz space classifying degree $9$ covers $\bar{C}\rightarrow \PP^1$ containing a divisor of type $4\bar{z}$ in one of its fibres and where $\bar{C}$ has genus $14$. Then $\H_{14,9}^{\mathrm{four}}$ is irreducible of dimension $39=\mbox{dim}(\cM_{14})$. Let $\mathcal{Y}_{2}$ be the parameter space of pairs $(S, C')$, where $S\subseteq \PP^6$ is a smooth complete intersection of $4$ quadrics and $C'\subseteq S$ is an integral curve of arithmetic genus $15$ with a cusp at $z$ as described by {\bf{(P2)}}. We consider the projections
$$
\begin{CD}
\cS @<\pi_1<< \mathcal{Y}_2 @>\pi_2>> \H_{14,9}^{\mathrm{four}} \\
\end{CD}
$$
given by $\pi_1\bigl([S,C']\bigr):=[S]$ and $\pi_2\bigl([S,C']):=[\bar{C}, \bar{A}]$ respectively. Observe that $\pi_2$ is birational onto its image. Indeed, given $[\bar{C}, \bar{A}]\in \pi_2(\mathcal{Y}_2)$, then we denote by $C'$ the image of the map $\varphi_{\omega_{\bar{C}}(2y)\otimes \bar{A}^{\vee}}\colon \bar{C}\rightarrow \PP^6$, in which case the canonical surface $S$ is recovered by (\ref{cansurf}). We conclude by Proposition \ref{dims} again that $\pi_1(\mathcal{Y}_2)$ is not dense in $\cS$. The final case when all curves $C'\in \PP_b$ are (at least) nodal at $z$ is ruled out  analogously.
\end{proof}

Before stating our next result, recall that one sets $\delta_i:=[\Delta_i]\in CH^1(\mm_g)$ for $0\leq i\leq \lfloor \frac{g}{2}\rfloor$, where $\Delta_i$ denotes the corresponding boundary divisor on $\mm_g$, see also \cite{HM} and \cite{FP}. We denote as usual by $\lambda\in CH^1(\mm_g)$ the Hodge class.   Recall also the formula \cite{HM} for the canonical class of $\mm_g$:
\begin{equation}\label{canm}
K_{\mm_g} \equiv 13\lambda-2\delta_0-3\delta_1 -2\delta_2-\cdots -2\delta_{\lfloor \frac{g}{2}\rfloor}\in CH^1(\mm_g).
\end{equation}

\begin{proposition}\label{numbers}
The rational curve $\Gamma$ is a sweeping pencil for the boundary divisor $\Delta_0$. Its intersection numbers with the standard generators of $CH^1(\mm_{16})$ are as follows:
$$\Gamma\cdot \lambda=22, \ \ \  \  \Gamma\cdot \delta_0=143,\ \  \ \Gamma\cdot \delta_j=0\   \  \mbox{ for } j=2, \  \ldots, 8.$$
\end{proposition}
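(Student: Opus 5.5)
The plan is to compute the numerical characters of the pencil $\PP=\bigl|\mathcal{I}_{\{x,y\}}(C)\bigr|$ on the canonical surface $S\subseteq \PP^6$ first on $\mm_{15,2}$, and then push them forward under the clutching map $\mm_{15,2}\rightarrow \Delta_0$. Sweeping is immediate: a general point of $\Delta_0$ is the image of a general $[C,x,y]\in \cM_{15,2}$, and Proposition \ref{penc2} produces such a pencil through it.

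Let $\widetilde{S}:=\mathrm{Bl}_{\{x,y\}}(S)$ with exceptional curves $E_x,E_y$. Since $\OO_S(C)$ is base point free with $C^2=\deg(A)=9$, the pencil $\PP$ has $9$ base points, among them $x$ and $y$. Let $\widehat{S}$ be the blow-up of $S$ at all $9$ base points. This gives a fibration $\widehat{S}\rightarrow \PP^1$ whose fibres are the curves of $\PP$ and which has sections $E_x,E_y$. For the Hodge class I would use the standard fibration formula
\[
\lambda\cdot R=\chi(\OO_{\widehat S})-(g-1)\chi(\OO_{\PP^1})=\chi(\OO_S)+g-1 .
\]
For the $(2,2,2,2)$ complete intersection we have $K_S^2=16$ and $p_g=7$. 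Since $S$ is regular, $\chi(\OO_S)=8$, which gives $\lambda\cdot R=8+14=22$.

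For $\delta_0$, count the singular fibres by topological Euler characteristics:
\[
\delta_0\cdot R=e(\widehat S)-e(\PP^1)e(C)=e(S)+9+4\cdot 14 .
\]
Noether's formula gives $e(S)=12\chi-K^2=96-16=80$. Hence the number of nodal fibres is $80+9+56=145$.

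Next pass to $\mm_{16}$ by gluing $x$ and $y$. The pullback of $\delta_0$ under clutching is $\delta_{\mathrm{irr}}+\psi_x+\psi_y$ minus the boundary corrections. The pullback of $\lambda$ is $\lambda$, so $\Gamma\cdot\lambda=22$. The sections satisfy $\psi_x\cdot R=-E_x^2=1$ and similarly for $y$. The standard formula gives $\xi^*\delta_0=\delta_{\mathrm{irr}}-\psi_x-\psi_y$, the normal bundle of the clutching map being $\psi_x^\vee\otimes\psi_y^\vee$. This yields $\Gamma\cdot\delta_0=145-2=143$.

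By Proposition \ref{prop:irr}, every curve in $\PP$ is integral with at most one node. So no member of the pencil lies in $\Delta_j$ for $j\geq 1$, and $\Gamma\cdot \delta_j=0$ for $j=2,\ldots,8$. The analogous vanishing for $\delta_1$ is also needed in the application. It uses that the marked points never collide, since $x$ and $y$ are distinct base points.

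I expect the main obstacle to be the sign bookkeeping in the pullback of $\delta_0$ under clutching, namely whether $\psi$ enters with a plus or a minus. A second delicate point is justifying that each singular fibre contributes exactly $1$ to the Euler characteristic count. That would follow from Proposition \ref{prop:irr}(ii), which shows all singular members are $1$-nodal, together with the fact that $\widehat{S}$ is smooth so that the local contribution is one. To fix the sign, I would check it against the requirement $\Gamma\cdot K_{\mm_{16}}=0$ from Theorem \ref{sweep}. Indeed $13\cdot 22-2\cdot 143=286-286=0$, which confirms the choice $143$.
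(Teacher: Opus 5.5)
Your proposal is correct and is essentially the paper's argument: blow up the $9$ base points, get $R\cdot\lambda=\chi(\OO_S)+g-1=22$ and $R\cdot\delta_0=e(\widehat S)+4(g-1)=145$ on $\mm_{15,2}$, then subtract $R\cdot\psi_x+R\cdot\psi_y=2$ via the clutching formula $\xi^*\delta_0=\delta_{\mathrm{irr}}-\psi_x-\psi_y+(\text{boundary terms, which vanish here by Proposition \ref{prop:irr}})$, which is the formula the paper takes from \cite{CR3}. Clean up the sign slips: the sentence with $+\psi_x+\psi_y$ is wrong, and the Hodge formula should read $\chi(\OO_{\widehat S})-(1-g)\chi(\OO_{\PP^1})$; also drop the ``check'' against $\Gamma\cdot K_{\mm_{16}}=0$, which is circular because Theorem \ref{sweep} is deduced from this very proposition, so the sign must come from the normal bundle of the clutching map (first Chern class $-\psi_x-\psi_y$), as you also state.
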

\begin{proof}
First we construct a fibration whose moduli map is precisely the rational curve $m\colon \PP^1\rightarrow \mm_{15,2}$ considered in (\ref{penc3}).
We consider the curve $C\subseteq S$ and observe that since $\OO_C(C)\cong A\in W^1_9(C)$, we have that $C^2=9$, that is, the pencil $\bigl|\mathcal{I}_{\{x,y\}}(C)\bigr|$ has precisely $9$ base points, namely $x, y$, as well as the $7$ further points lying in the same fibre of the pencil $|A|$ as $x$ and $y$.  We consider the blow-up surface
$\epsilon\colon \widetilde{S}=\mathrm{Bl}_9(S)\rightarrow S$ at these $9$ points. It comes equipped with a fibration 
$$\pi\colon \widetilde{S}\longrightarrow \PP^1,$$ 
as well as with two sections $E_x, E_y\subseteq \widetilde{S}$ corresponding to the exceptional divisors at $x$ and $y$ respectively.

\vskip 4pt

In order to compute the intersection numbers of $R=m(\PP)$ with the tautological classes on $\mm_{15,2}$, we use for instance \cite{Tan}. The subscript indicates the moduli space on which the intersection number is computed.
\begin{equation}\label{eq:lam15}
\bigl(R\cdot \lambda\bigr)_{\mm_{15,2}}=\chi(\widetilde{S}, \OO_{\widetilde{S}})+g-1=h^2(S, \OO_S)+g=h^0(S,\OO_S(1))+15=22.
\end{equation}
Here we have used $H^1(\widetilde{S}, \OO_{\widetilde{S}})=H^1(S,\OO_S)=0$, as well as the fact that $S$ is a canonical surface, hence $\omega_S=\OO_S(1)$, therefore $h^2(\widetilde{S}, \OO_{\widetilde{S}})=h^2(S, \OO_S)=7$. Furthermore, recalling that all curves in the fibres of $m$ are irreducible, we find via \cite{Tan} that
$$\bigl(R\cdot \delta_0\bigr)_{\mm_{15,2}}=c_2(\widetilde{S})+4(g-1)=c_2(\widetilde{S})+56.$$

From the Euler formula, $c_2(\widetilde{S})=12\chi(\widetilde{S}, \OO_{\widetilde{S}})-K_{\widetilde{S}}^2$. We have already computed that
$\chi(\widetilde{S}, \OO_{\widetilde{S}})=8$, whereas $K_{\widetilde{S}}^2=K_S^2-9=\mbox{deg}(S)-9=7$, for $S$ is an intersection of $4$ quadrics.
Thus $c_2(\widetilde{S})=12\cdot 8-7=89$,  leading to $\bigl(R\cdot \delta_0\bigr)_{\mm_{15,2}}=89+4\cdot 14=145$.

\vskip 4pt

If we denote by $\psi_x,\psi_y\in CH^1(\mm_{15,2})$ the cotangent classes corresponding to the marked points labelled by $x$ and $y$ respectively,
we compute furthermore
$$R\cdot \psi_x=-E_x^2=1  \ \ \mbox{ and } \  \ R\cdot \psi_y=-E_y^2=1.$$
We now pass to the pencil $\xi\colon \PP^1\rightarrow \mm_{16}$ obtained from $m$ by identifying pointwise the disjoint sections $E_x$ and $E_y$ on the surface $\widetilde{S}$. First, using (\ref{eq:lam15}) we observe that
$$\Gamma\cdot \lambda=\xi(\PP)\cdot \lambda=\bigl(R\cdot \lambda\bigr)_{\mm_{15,2}}=22.$$
Furthermore, using Proposition \ref{prop:irr} we conclude that $\Gamma \cdot \delta_i=0$ for $i=1, \ldots, 8$. Finally, invoking for instance \cite[page 271]{CR3}, we find that
$$\Gamma\cdot \delta_0=\bigl(R\cdot \delta_0\bigr)_{\mm_{15,2}}-\bigl(R\cdot \psi_x\bigr)_{\mm_{15,2}}-\bigl(R\cdot \psi_y\bigr)_{\mm_{15,2}}=145-2=143.$$
\end{proof}

\vskip 2pt

\noindent \emph{Proof of Theorem \ref{sweep}.} Since the image of $m$ passes through a general point of $\mm_{15,2}$, the rational curve $\Gamma\subseteq \mm_{16}$ constructed in Proposition \ref{numbers} is a sweeping curve for the boundary  divisor $\Delta_0$. Using the expression (\ref{canm}) for the canonical divisor of $\mm_{16}$, we compute
$\Gamma\cdot K_{\mm_{16}}=13\ \Gamma \cdot \lambda-2\ \Gamma\cdot \delta_0=13\cdot 22-2\cdot 143=0$. Also $\Gamma\cdot \Delta_0=143>0$.
\hfill $\Box$

\section{Non-abelian Brill-Noether theory and $\mm_{16}$}

Based on the work of Mukai \cite{Mu} and Bertram--Feinberg \cite{BF}, one expects a general curve of genus $16$ to possess a finite number of stable vector bundles $E$ of rank $2$ with $\mbox{det}(E)=\omega_C$ and $h^0(C,E)=9$. Establishing this expectation is a prerequisite for the construction of the non-abelian Brill--Noether divisor $\mathfrak{D}_2$ on $\mm_{16}$.  

\vskip 3pt

We work in a more general context and fix a general curve $C$ of genus $g$ and a point $p\in C$. The moduli space $SU_C(2,\omega)$ of semistable vector bundles of rank $2$ with canonical determinant being singular, it is ill-suited to intersection-theoretic calculations. To determine the fundamental class of the non-abelian Brill-Noether locus $SU_C(2,\omega, k)$, as explained in \cite{Mu}, or \cite[\S8]{FJP1}, we pass instead to the Hecke correspondence over  the non-singular moduli space $SU_C(2,\omega_C(p))$ of stable rank $2$ vector bundles $G$ on $C$ with determinant $\mbox{det}(G)\cong \omega_C(p)$.

We choose  a universal rank $2$ vector bundle $\G$ on $C\times SU_C\bigl(2, \omega_C(p)\bigr)$ and consider the \emph{Hecke correspondence}
\[
\PPP:=\PP\Bigl(\G_{|\{p\}\times SU_C(2,\omega_C(p))}\Bigr),
\]
equipped with the projection $\pi_1\colon \PPP\rightarrow SU_C(2,\omega_C(p))$. The Hecke correspondence ist the parameter space of elementary transformations of the form
\begin{equation}\label{eq:hecke}
0\longrightarrow E\longrightarrow G\longrightarrow  \mathbb \ell_{p}\longrightarrow 0,
\end{equation}
where $G\in SU_C(2,\omega_C(p))$ and $\ell_p\in \PP\bigl(G_{|p}^{\vee}\bigr)$. One also has a morphism $\rho\colon \PPP\rightarrow 
SU_C(2,\omega)$ which assigns to the elementary transformation (\ref{eq:hecke}) the vector bundle $E$.

\vskip 4pt

Set $h:=c_1\bigl(\OO_{\PPP}(1)\bigr)$ and $\alpha:=c_1\bigl(\L_{\mathrm{odd}}\bigr)\in H^2\bigl(SU_C\bigl(2,\omega_C(p)\bigr),\mathbb Q\bigr)$, where
$\L_{\mathrm{odd}}$ is the ample generator of $\mbox{Pic}\bigl(SU_C\bigl(2,\omega_C(p))\bigr)\cong \mathbb Z$. Following both \cite[Section 5]{Mu} and \cite[\S 6]{FJP1}, for each $k\in\mathbb Z_{>0}$, the Lagrangian degeneracy locus 
\[
B_{\PPP}(k):=\Bigl\{\bigl[0\longrightarrow E\longrightarrow G\longrightarrow  \ell_{p}\rightarrow 0\bigr]\in \PPP: h^0(C, E)\geq k\Bigr\}
\]
has expected codimension $\beta(2,g,k)+1=3g-2-{k+1\choose 2}$. Note that $$B_{\PPP}(k)=\rho^{-1}\bigl(SU_C(2,\omega, k)\bigr).$$  
We consider the K\"unneth decomposition of the Chern classes of $\G$.  Using the fact that $\mbox{det}(\G)\cong \omega_C(p)\boxtimes \L_{\mathrm{odd}}$, we can  write accordingly
\begin{equation*}%\label{ef:chern_univ}
c_1(\G)=\alpha+(2g-1)\varphi \ \ \mbox{ and } \ \ c_2(\G)=\frac{\alpha^2-\beta}{4}+\psi+g \alpha\otimes \varphi,
\end{equation*}
where $\varphi\in H^2(C,\mathbb Q)$ is the fundamental class of the curve, $\beta \in  H^4\bigl(SU_C(2,\omega_C(p)), \mathbb Q\bigr)$
and $\psi \in H^3\bigl(SU_C(2,\omega_C(p)), \mathbb Q\bigr)\otimes H^1(C,\mathbb Q)$. We also consider the class
\[
\gamma \in H^6\bigl(SU_C(2,\omega_C(p)),\mathbb Q\bigr)
\]
by the formula
$\psi^2=\gamma\otimes \varphi $. One has the fundamental relation
\begin{equation}\label{eq:quadratic}
h^2=\alpha h-\frac{\alpha^2-\beta}{4}\in H^4(\PPP,\mathbb Q),
\end{equation}
from which we can recursively determine all powers of $h$. As explained in \cite[Proposition 6.4]{FJP1}, we have the following formulas in $H^*(\PPP,\mathbb Q)$ to be used repeatedly:
\begin{align*}
h^n=\frac{h(-2\alpha+2h)\sqrt{\beta}+\alpha^2-2\alpha h+\beta}{\sqrt{\beta}(\alpha^2-\beta)} \Bigl(\frac{\alpha+\sqrt{\beta}}{2}\Bigr)^n+\\
 \frac{h(2\alpha-2h)\sqrt{\beta}+\alpha^2-2\alpha h+\beta}{\sqrt{\beta}(\alpha^2-\beta)} \Bigl(\frac{\alpha-\sqrt{\beta}}{2}\Bigr)^n.
\end{align*}

Following \cite[\S 6]{FJP1}, we endow $B_{\PPP}(k)$ with the structure of a Lagrangian degeneracy locus. When $3g-3={k+1\choose2}$, we expect $B_{\PPP}(k)$ to be $1$-dimensional.
The virtual class of this Lagrangian degeneracy loci can be computed following \cite[Proposition 1.11]{Mu} and we find the following determinantal formula
\begin{equation}\label{eq:bp8}
[B_{\PPP}(k)]^{\mathrm{vir}}=\Delta_{k,k-1, \ldots, 1}(c_i)=\mathrm{det}\bigl(c_{k+j+1-2i}\bigr)_{i,j=1,\ldots, k}
\end{equation}
where $c_i\in H^{2i}(\PPP, \mathbb Q)$ are classes defined recursively by the following formulas, see \cite[Corollary 4.2]{LNP} and \cite[\S 6]{FJP1}:
\begin{equation}\label{eq:recursion_initial}
c_0=2, \ c_1=h, \ c_2=\frac{h^2}{2}, \ c_3=\frac{1}{3}\Bigl(\frac{h^3}{2}+\frac{\beta h}{4}-\frac{\gamma}{2}\Bigr),\
c_4=\frac{1}{4}\Bigl(\frac{h^4}{6}+\frac{\beta h^2}{3}-\frac{2\gamma h}{3}\Bigr),
\end{equation}
and for each $n\geq 1$,
\begin{equation}\label{eq:recursion}
(n+4)c_{n+4}-\frac{n+2}{2}\beta c_{n+2}+\Bigl(\frac{\beta}{4}\Bigr)^2 nc_n=hc_{n+3}-\Bigl(\frac{\beta h}{4}+\frac{\gamma}{2}\Bigr)c_{n+1}.
\end{equation}

In order to evaluate the determinant (\ref{eq:bp8}), we apply the recursion for the classes $h^n$ together with Thaddeus' formulas \cite{Th} determining all top intersection numbers of tautological classes on $SU_C(2,\omega_C(p))$. Precisely, for $m+2n+3p=3g-3$, one has
%\begin{equation}\label{eq:thaddeus}
$$
\int_{SU_C(2, \omega_C(p))} \alpha^m \cdot \beta^n \cdot \gamma^p=(-1)^{g-p}\frac{g! m!}{(g-p)! q!}2^{2g-2-p}(2^q-2)B_q,
$$
where $q=m+p+1-g$ and $B_q$ denotes the $q$-th Bernoulli number defined via the generating series 
$$\frac{x}{e^x-1}=1-\frac{x}{2}+\sum_{q \ \mathrm{ even}} B_q\cdot \frac{x^q}{q!}.$$ When the virtual dimension of $B_{\PPP}(k)$ equals $1$, we write
\[
[B_{\PPP}(k)]^{\mathrm{vir}}=F_k(\alpha, \beta, \gamma)+h\cdot G_k(\alpha, \beta, \gamma),
\]
where $F_k(\alpha, \beta, \gamma)$  and $G_k(\alpha, \beta, \gamma)$ are homogeneous polynomials of degree $3g-3$ and $3g-4,$ respectively. Moreover, $B_{\PPP}(k)$ is a $\PP^1$-fibration over $SU_C(2,\omega, k)$.  We write
$$
[B_{\PPP}(k)]^{\mathrm{vir}}=N_g \rho^*([E_0]),
$$
where $[E_0]\in SU_C(2,\omega)$ is general. Intersecting both sides of this equality with the class $\alpha$, we find an expression of $N_g$ as an integral over the smooth variety $SU_C(2, \omega_C(p))$: 
%$2N_g=h\cdot \alpha G_k(\alpha, \beta, \gamma)=-h\cdot F_k(\alpha, \beta, \gamma)$, hence
\begin{equation}\label{eq:int_p}
N_g=\bigl|SU_C(2, \omega, k)\bigr|=\frac{1}{2} \int_{SU_C(2, \omega_C(p))} \alpha\cdot G_k(\alpha, \beta, \gamma).
\end{equation}
Finding a closed formula looks challenging, but the values of $N_g$ can be computed for any given $g$ and we find that
$$N_6=N_8=1,\  N_{13}=3, \ N_{16}=11, \ \ N_{23}=11268, \ \ N_{27}=2461619, \ N_{36}=9522672213896.$$
The first two entries confirm Mukai's structure theorems \cite{Mu1} for general curves of genus $6$ and $8$ respectively, while the value of $N_{13}$ has been established in \cite{FJP1}.

\vskip 5pt

\noindent \emph{Proof of Theorem \ref{thm:existence}}. We consider the forgetful map $\vartheta\colon \mathcal{SU}_{16}(2, \omega, 9)\rightarrow \cM_{16}$ defined in (\ref{eq:varth}). Applying \cite{Te}, the Mukai-Petri map (\ref{eq:mp})
$$
\mu_E\colon \mbox{Sym}^2 H^0(C,E)\longrightarrow H^0(C, \mbox{Sym}^2 E)
$$
is an isomorphism for each bundle $E\in SU_C(2,\omega, 9)$ over a general curve $[C]\in \cM_{16}$. As explained in \cite{Mu}, this implies that, if non-empty, the non-abelian Brill-Noether locus $SU_C(2, \omega, 9)$ is smooth of dimension zero. On the other hand, the calculation above shows that $\mbox{deg}(\vartheta)=N_{16}=11$, in particular $SU_C(2,\omega, 9)\neq \emptyset$.  Therefore the general fibre $\vartheta^{-1}([C])$ consists of $11$ distinct stable vector bundles on $C$. 

\vskip 3pt

Next we show that each $E\in SU_C(2,\omega,9)$ is globally generated. First we observe that $E$ has to be stable. Indeed, else there exists a line bundle $L\in \mbox{Pic}^{15}(C)$ such that one has an exact sequence
$$0\longrightarrow L\longrightarrow E\longrightarrow \omega_C\otimes L^{\vee}\longrightarrow 0,$$
from which we find $h^0(C,L)+h^0(C, \omega_C\otimes L^{\vee})\geq 9$, therefore $h^0(C,L)\geq 5$, that is, $W^4_{15}(C)\neq 0$. But using \cite{EH2}, the locus of curves $[C]\in \cM_{16}$ satisfying this property has codimension at least $2$ in $\cM_{16}$, therefore $E$ must be stable.

\vskip 3pt

Assume now by contradiction that $E$ fails to be globally generated at $p\in C$, that is, the evaluation map $H^0(C,E)\rightarrow E_{|p}$ has rank at most $1$. Then since the map $\mu_E$ introduced in (\ref{eq:mp}) is an isomorphism, it follows that the map $$H^0(C, \mbox{Sym}^2E)\longrightarrow \mbox{Sym}^2 E_{|p}$$ has rank at most $1$, that is, $h^0\bigl(C, \mbox{Sym}^2E(-p)\bigr)\geq h^0(C, \mbox{Sym}^2(E))-1=44$. If $\mbox{End}_0(E)$ denotes the vector bundle of traceless endomorphisms of $E$, then using the identification $\mbox{End}_0(E)\cong \mbox{Sym}^2(E)\otimes \omega_C^{\vee}$, we obtain from this that 
$$
h^0\bigl(C, \mbox{End}_0(E)(p)\bigl)\geq 2.
$$
The map $H^0\bigl(C, \mbox{End}_0(E)(p)\bigr)\rightarrow H^0(C,\OO_C(2p))\cong \mathbb C$ associating to an endomorphism $\varphi\colon E\rightarrow E(p)$ its determinant being a quadratic form,  there exists $0\neq \varphi\in \mbox{End}_0(E)$ with $\mbox{det}(\varphi)=0$. Since $\mbox{tr}(\varphi)=0$, it follows that $\varphi^2\colon E\rightarrow E(2p)$ is the zero morphism. Then $L:=\mbox{Ker}(\varphi)$ is a line subbundle of $E$ and the relation $\varphi^2=0$ implies the existence of an inclusion $E/L\hookrightarrow L(p)$, or equivalently, $H^0(C, L^2(p)\otimes \omega_C^{\vee})\neq 0$. From this it follows that $\mbox{deg}(L)\geq 15$, contradicting the assumption that $E$ is stable. We conclude that $E$ is globally generated. In the same way one argues that in fact $E$ is a very ample vector bundle, that is, the map $\PP_E\rightarrow \PP H^0(C,E)^{\vee}$ is an embedding.

\hfill $\Box$ 

\subsection{The twisted non-abelian Petri map.} We fix a smooth curve $C$ of genus $g$, an integer $a\in \mathbb Z_{>0}$ and denote by $\mathfrak{Bun}_2(C)$ the Artin stack of all rank $2$ vector bundles on $C$. We consider the Brill-Noether locus of pairs
$$\mathfrak{bn}_{a,k}:=\bigl\{[F,D]: F \in \mathfrak{Bun}_2(C), \  D\in C_{a}, \ \mbox{det}(F)\cong \omega_C(-D), \ h^0(C,F)\geq k\bigr\}.$$ 
For $F\in \mathfrak{Bun}_2(C)$ as above, we have $F(D)\cong F^{\vee}\otimes \omega_C$. We also have a natural inclusion 
$H^0(C, \omega_C(-D)) \subseteq H^0(C, \omega_C)\subseteq H^0\bigl(C, \mbox{End}(F)\otimes \omega_C\bigr)$. The Petri map 
\begin{equation}\label{eq:Petri_classic}
\mathfrak{pe}_F\colon H^0(C,F)\otimes H^0(C, F^{\vee}\otimes \omega_C) \longrightarrow H^0(C, \mathrm{End}(F)\otimes \omega_C)
\end{equation}
has the space $\bigwedge^2 H^0(C,F)$ as an obvious kernel and induces the \emph{twisted Petri map}
\begin{equation}\label{eq:twisted}
\mu_{F,D}\colon \frac{H^0(C,F)\otimes H^0(C, F(D))}{\bigwedge^2 H^0(C,F)}\longrightarrow \frac{H^0\bigl(C, \mathrm{End}(F)\otimes \omega_C\bigr)}{H^0(C, \omega_C(-D))}.
\end{equation}

\begin{proposition}\label{prop:twisted_bn} 
Each component of $\mathfrak{bn}_{a,k}$ has dimension at least $3g-3+k(k+a)+{k\choose 2}$. Moreover, $\mathfrak{bn}_{a, k}$ is smooth of this expected dimension at the point $[F, D]$ if $\mu_{F,D}$ is injective.
\end{proposition}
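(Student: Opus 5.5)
The plan is a deformation-theoretic argument. I would realize $\mathfrak{bn}_{a,k}$ locally as a determinantal locus inside a smooth parameter space, and then read the tangent space off from the twisted Petri map $\mu_{F,D}$ of (\ref{eq:twisted}).

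\textbf{Step 1: the ambient space.} Let $\mathfrak{P}_a$ be the stack of pairs $[F,D]$ with $F\in\mathfrak{Bun}_2(C)$, $D\in C_a$ and $\mathrm{det}(F)\cong\omega_C(-D)$. It fibres over $C_a$, and its fibres are stacks of rank $2$ bundles with fixed determinant. It is smooth, and its tangent space at $[F,D]$ is an extension of $T_D C_a=H^0(\OO_D(D))$ by $H^1(C,\mathrm{End}_0(F))$. Its obstructions vanish, because $H^2=0$ on a curve.

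\textbf{Step 2: the determinantal description.} Locally on $\mathfrak{P}_a$, choose an effective divisor $M$ of large degree. This gives a two-term complex of vector bundles
\[
\pi_*\bigl(\F(M)\bigr)\longrightarrow \pi_*\bigl(\F(M)_{|M}\bigr),
\]
whose kernel at a point is $H^0(C,F)$ and whose cokernel is $H^1(C,F)$. The locus $\mathfrak{bn}_{a,k}$ is the degeneracy locus where the kernel has dimension at least $k$. By Serre duality and $F(D)\cong F^{\vee}\otimes\omega_C$, we have $H^1(C,F)\cong H^0(C,F(D))^{\vee}$. A Riemann--Roch computation gives $h^0(F(D))=k+a$ whenever $h^0(F)=k$.

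\textbf{Step 3: the self-duality and the dimension bound.} The key structural point is that the symplectic form $\bigwedge^2F\to\omega_C(-D)$ gives the complex a partial self-duality. As a result, the first-order conditions cut out by the kernel are not all independent: the $\bigwedge^2H^0(C,F)$-part of $H^0(F)\otimes H^0(F(D))$ imposes no condition. This is exactly the obvious kernel of $\mathfrak{pe}_F$ that is divided out in (\ref{eq:twisted}). Likewise, deformations coming from $H^0(\omega_C(-D))$ are absorbed by the determinant constraint.

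So the number of independent conditions equals the dimension of the source of $\mu_{F,D}$, suitably corrected, and the standard lower bound for degeneracy loci then gives the count in the statement. I would carry out this bookkeeping exactly in the normalization used in the statement: $3g-3$ from the bundle directions, the term $k(k+a)$ from $H^0(F)\otimes H^0(F(D))$, and ${k\choose 2}$ from $\bigwedge^2H^0(F)$. This bookkeeping is where I expect the main care is needed.

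\textbf{Step 4: smoothness.} For smoothness, I would compute the Zariski tangent space at $[F,D]$ by the usual Petri argument. A first-order deformation $(\dot F,\dot D)$ stays in $\mathfrak{bn}_{a,k}$ if and only if every section $s\in H^0(F)$ lifts. By duality this holds if and only if the class of $(\dot F,\dot D)$, viewed in the dual of $H^0(\mathrm{End}(F)\otimes\omega_C)/H^0(\omega_C(-D))$, annihilates the image of $\mu_{F,D}$.

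Hence the tangent space is the annihilator of $\mathrm{Im}(\mu_{F,D})$. If $\mu_{F,D}$ is injective, this annihilator has exactly the expected dimension. Combined with the lower bound from Step 3, this forces $\mathfrak{bn}_{a,k}$ to be smooth of the expected dimension at $[F,D]$.

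\textbf{Main obstacle.} The delicate step is identifying precisely the trivial part of the obstruction, namely $\bigwedge^2H^0(F)$ together with $H^0(\omega_C(-D))$. One must show that the symplectic structure really makes these directions automatically satisfied, so that the quotient map (\ref{eq:twisted}) is the correct one. I would verify this through the cup-product pairing $H^0(F)\otimes H^0(F)\to H^0(\omega_C(-D))\subseteq H^0(\omega_C)$, together with the trace map.
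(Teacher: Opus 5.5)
Your Step 4 is the paper's argument. There too the cotangent space at $[F,D]$ is identified with $\mathrm{Coker}(\mu_{F,D})$ by dualizing the cup product $H^1(C,\mathrm{End}(F))\to H^0(C,F)^{\vee}\otimes H^1(C,F)$.

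The gap is in Step 3. The paper does not derive the lower bound by deformation theory; it imports it from Osserman (Corollary 1.2 of the cited work), and your sketch does not replace that input.

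\textbf{What your argument actually gives.} The standard bound for the degeneracy locus of your two-term complex gives codimension at most $k(k+a)$, nothing better. The observation that $\bigwedge^2 H^0(C,F)$ ``imposes no condition'' is a first-order statement. It only enlarges the Zariski tangent space, and tangent spaces bound dimension from above, never from below. To gain the $\binom{k}{2}$ you must show these equations are redundant identically, not just to first order. Concretely, you need to exhibit the locus locally as a degeneracy locus of a special type whose codimension is at most $k(k+a)-\binom{k}{2}$. Since $\mathrm{det}(F)\cong\omega_C(-D)$ rather than $\omega_C$, the Mukai--Bertram--Feinberg Lagrangian set-up does not apply verbatim.

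\textbf{One way to close the gap.} Take $N$ effective of large degree. The residue pairing makes $H^0\bigl(F(D+N)/F(-N)\bigr)$ symplectic of dimension $2n$, where $n=a+2\deg N$. Inside it, the image of $H^0(F(D+N))$ is Lagrangian, $H^0\bigl(F/F(-N)\bigr)$ is isotropic of dimension $m=2\deg N$, and the two meet in $H^0(C,F)$. In the Lagrangian Grassmannian, the Lagrangians meeting a fixed isotropic $m$-plane in dimension at least $k$ form a Schubert locus of codimension $k(n-m)+\binom{k+1}{2}=k(k+a)-\binom{k}{2}$. Pulling this locus back gives the bound.

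\textbf{The printed formula.} Do not force your bookkeeping to reproduce $3g-3+k(k+a)+\binom{k}{2}$; it is a misprint. For $k\geq 1$ it exceeds the dimension of the ambient space, so as written the statement is false whenever $\mathfrak{bn}_{a,k}\neq\emptyset$. By your own Step 1 the ambient space has dimension $3g-3+a$. The $k(k+a)-\binom{k}{2}$ conditions, which equal the dimension of the source of $\mu_{F,D}$, must be subtracted. The correct expected dimension is therefore $3g-3+a-k(k+a)+\binom{k}{2}$.

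This corrected value is confirmed twice in the paper:
\begin{itemize}
\item It gives $-1$ for $(g,a,k)=(16,3,7)$, matching the Remark after the proposition.
\item It is what the smoothness statement needs. For simple $F$ with $h^0(\mathcal{O}_C(D))=1$, the target of $\mu_{F,D}$ has dimension $(4g-3)-(g-a)=3g-3+a$. So injectivity of $\mu_{F,D}$ gives exactly this value for $\dim\mathrm{Coker}(\mu_{F,D})$.
\end{itemize}
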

\begin{proof}
The dimension bound follows from \cite[Corollary 1.2]{Oss}, even though he reaches this bound in a different way. Infinitesimal deformations of  $F$ are parametrized by the vector space $H^1(C, \mbox{End}(F))$. The deformations of $F$ whose determinant is a special line bundle are given by  
$$\Bigl\{v\in H^1(C, \mathrm{End}(F)): \mathrm{tr}(v)\in \mbox{Im}\{H^0(\OO_D(D))\rightarrow H^1(C, \OO_C)\}\Bigr\},$$ where we identify $H^0(\OO_D(D))$ with the tangent space of the symmetric product $C_a$ of $C$.  It is standard that deformations of $F$ preserving all the sections in $H^0(C,F)$ are those in the kernel of the cup-product map 
$H^1(C, \mbox{End}(F))\longrightarrow H^0(C,F)^{\vee}\otimes H^1(C,F)$,
which is known to be dual to the Petri map. By dualizing, we find the identification 
$$T_{[F,D]}^{\vee}\bigl(\mathfrak{bn}_k\bigr) \cong \mathrm{Coker}\bigl(\mu_{F,D}),$$
from which the conclusion follows.    
\end{proof} 

\begin{remark} Observe that for $a=0$, that is, when $\mbox{det}(F)\cong \omega_C$, the map $\mu_{F, \emptyset}$ coincides with the Mukai-Petri map $\mu_F$ given by (\ref{eq:mp}). Note furthermore, that for a curve $C$ of genus $16$, when $k=7$ and $a=3$ the expected dimension of the locus $\mathfrak{bn}_{3,7}$ provided by Proposition \ref{prop:twisted_bn} equals $-1$. 
\end{remark}

\section{The universal trisecant locus on $\mm_{16}$.}

We now introduce the divisor $\mathfrak{D}_2$ considered in the Introduction and defined as the closure of the locus of those curves $[C]\in \cM_{16}$ possessing a stable vector bundle $F$ of rank $2$ with $h^0(C,F)\geq 7$ and with $\mbox{det}(F)\cong \omega_C(-D)$, where $D=x+y+z$ is an effective divisor of degree $3$ on $C$. The goal of this Section is to establish a connection between $\mathfrak{D}_2$ and the universal moduli space parametrizing  stable rank $2$ vector bundles with canonical determinant.  Recall that $\vartheta \colon \mathcal{SU}_{16}(2, \omega, 9)\rightarrow \cM_{16}$ is the modular degree $11$ cover discussed in Theorem \ref{thm:existence} and to ease notation we set $\mathcal{BN}:=\mathcal{SU}_{16}(2,\omega,9)$.

\vskip 3pt

We fix a general curve $C$ of genus $16$ and choose a  vector bundle $E\in SU_C(2,\omega,9)$. Since $E$  may be assumed to be globally generated and satisfies $h^0(C,E)=9$, the evaluation map $H^0(C,E)\otimes \OO_C\twoheadrightarrow E$ induces a map 
$$\phi_E\colon C\longrightarrow G(2, H^0(C,E)^{\vee}), \ \ \ p\mapsto E_{|p}^{\vee}.$$
We denote by $\PP_E\subseteq \PP H^0(C,E)^{\vee}\cong \PP^8$ the scroll corresponding to the vector bundle $E$ and by $\PP_{E,p}\subseteq \PP_E$ the ruling corresponding to the point $p\in C$.   

\begin{definition}\label{def:trisecant}
We define the \emph{trisecant divisor} $\mathfrak{Tr}$ of $\mathcal{BN}$ to be the closure of the locus consisting of pairs $[C, E]$ such that the scroll $\PP_E$ has a proper trisecant line, that is, a line  $L \in G(2, H^0(C,E)^{\vee})$ not contained in the scroll $\PP_E$, such that 
\begin{equation}\label{eq:trisecant}
L \cdot \PP_E=x'+y'+z', \ \ \mbox{where } x' \in \PP_{E,x}, y'\in \PP_{E_y}, z'\in \PP_{E,z}, \ \mbox{ with } \  x,y,z\in C \mbox{ distinct}.
\end{equation}
\end{definition}

The divisor $\mathfrak{Tr}$ is intimately related to the divisor $\mathfrak{D}_2$. We have the following:

\begin{proposition}\label{prop:d2=tris}
The loci $\mathfrak{D}_2$ and $\vartheta(\mathfrak{Tr})$ agree outside a subset of codimension larger than $2$ in $\cM_{16}$. They are both equal to the closure of the following locus 
\begin{align*}
\Bigl\{[C]\in \cM_{16}: \exists E\in SU_C(2,\omega, 9) \mbox{ and distinct points } x,y,z\in C \ \mbox{ such that } \\
\mbox{ the evaluation map } \ \mathrm{ev}_{x,y,z}\colon H^0(C,E)\longrightarrow E_{|x} \oplus E_{|y} \oplus E_{|z} \  \mbox{ is not surjective}\Bigr\}.
\end{align*} 
\end{proposition}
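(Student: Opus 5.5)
The plan is to prove that both $\mathfrak{D}_2$ and $\vartheta(\mathfrak{Tr})$ coincide, off a small exceptional set, with the locus $\mathcal{V}$ of curves for which some $\mathrm{ev}_{x,y,z}$ fails to be surjective. The two identifications are linear algebra in $H^0(C,E)^{\vee}\cong \mathbb{C}^9$, once we dualize. Since $E$ is globally generated (Theorem \ref{thm:existence}), each $\mathrm{ev}_x$ is surjective, so $E_{|x}^{\vee}\hookrightarrow H^0(C,E)^{\vee}$ and the ruling $\PP_{E,x}$ is the line $\PP(E_{|x}^{\vee})\subseteq \PP^8$. The dual of $\mathrm{ev}_{x,y,z}$ is the sum map $E_{|x}^{\vee}\oplus E_{|y}^{\vee}\oplus E_{|z}^{\vee}\to H^0(C,E)^{\vee}$. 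Hence $\mathrm{ev}_{x,y,z}$ is non-surjective if and only if the three rulings $\PP_{E,x},\PP_{E,y},\PP_{E,z}$ span at most a $\PP^4$.

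\emph{Step 1: $\vartheta(\mathfrak{Tr})$ equals $\mathcal{V}$.} Suppose $L$ is a trisecant line as in (\ref{eq:trisecant}). Then $z'$ lies in the span of $x'$ and $y'$, so the span of the three rulings has dimension at most $4$, and $\mathrm{ev}_{x,y,z}$ is not surjective. Conversely, suppose the three rulings lie in a $\PP^4$. The span $\langle \PP_{E,x},\PP_{E,y}\rangle$ is at most a $\PP^3$, and it meets $\PP_{E,z}$ in a point $z'$. Projecting $\PP^3$ from $z'$ sends the two lines to two lines in $\PP^2$, which meet. This produces a line through $z'$ meeting $\PP_{E,x}$ and $\PP_{E,y}$, i.e. a trisecant line. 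It remains to check two degenerate cases: that $L\not\subseteq \PP_E$, and that the points $x,y,z$ can be taken distinct. Both fail only on a subset of codimension at least $2$ in $\cM_{16}$. We argue this by a dimension count on the trisecant configurations, using that $\PP_E\subseteq\PP^8$ is an embedded smooth scroll whose lines are only the rulings for $[C]$ general in a divisor.

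\emph{Step 2: $\mathcal{V}$ lies in $\mathfrak{D}_2$.} Take one-dimensional quotients $\ell_x,\ell_y,\ell_z$ and let $F=\mathrm{Ker}(\mathrm{ev})$ as in (\ref{eq:evxyz}). Then $h^0(C,F)=9-\mathrm{rk}\bigl(H^0(C,E)\to \ell_x\oplus\ell_y\oplus\ell_z\bigr)$. Dually, the quotients correspond to points $x'\in \PP_{E,x}$, $y'\in \PP_{E,y}$, $z'\in \PP_{E,z}$, and this rank equals $1+\dim\langle x',y',z'\rangle$. So $h^0(C,F)\geq 7$ exactly when $x',y',z'$ are collinear, which is the trisecant condition of Step 1. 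We must also check that $F$ is stable. A destabilizing line subbundle of $F$ would have degree at least $14$ and would sit inside $E$. As in the proof of Theorem \ref{thm:existence}, Brill--Noether counts via \cite{EH2} show that this happens only in codimension at least $2$.

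\emph{Step 3: $\mathfrak{D}_2$ lies in $\mathcal{V}$.} This direction, together with controlling the exceptional sets, is the main obstacle. Start from $F$ stable with $\det F\cong\omega_C(-D)$ and $h^0(C,F)\geq 7$. Riemann--Roch and $F(D)\cong F^{\vee}\otimes\omega_C$ give $h^0(C,F(D))=h^0(C,F)+3\geq 10$. We look for an intermediate $F\subseteq E\subseteq F(D)$ with $E/F\cong \ell_x\oplus\ell_y\oplus\ell_z$; any such $E$ automatically has $\det E\cong \omega_C$. The space of such $E$ is $(\PP^1)^3$, and the sections of $F(D)$ give a symmetric (Lagrangian) degeneracy condition. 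The first thing to try is to show that some $E$ in this family has $h^0(C,E)\geq 9$, using the natural symplectic form on $H^0(\OO_D\otimes F(D))$ in the spirit of Mukai's Lagrangian description \cite{Mu}. Then $E\in SU_C(2,\omega,9)$, $F$ is recovered as the kernel of an evaluation map, and Step 2 applies backwards. The loci where this fails are: $h^0(C,F)\geq 8$, $h^0(C,E)\geq 10$, $E$ non-stable, or $D$ non-reduced. We bound each of these using Proposition \ref{prop:twisted_bn}, whose expected dimensions drop by at least one further unit in each case, and Teixidor's injectivity of $\mu_E$ \cite{Te}. This shows that each of them maps to a subset of codimension at least $2$ in $\cM_{16}$.
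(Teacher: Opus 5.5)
Your Steps 1 and 2 match the paper's argument: dualize $\mathrm{ev}_{x,y,z}$, use collinearity $\Leftrightarrow h^0(C,F)\geq 7$, and get stability of $F$ from a Brill--Noether count. The gap is Step 3, which carries the entire inclusion $\mathfrak{D}_2\subseteq\mathcal{V}$. Saying ``the first thing to try is to show that some $E$ in this family has $h^0(C,E)\geq 9$'' restates what must be proved; it gives no argument.

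Here is the missing idea. Put $Q_p=F(D)_{|p}$ and $Q=Q_x\oplus Q_y\oplus Q_z$, with the residue symplectic form $\sigma$. Let $W$ be the image of $H^0(C,F(D))$ in $Q$; it is Lagrangian and $3$-dimensional when $h^0(C,F)=7$. Let $\delta\colon Q\to H^1(C,F)$ be the coboundary. The intermediate bundle $E$ attached to lines $\langle v_p\rangle\subseteq Q_p$ satisfies
\[
h^0(C,E)=h^0(C,F)+\dim\bigl(W\cap\langle v_x,v_y,v_z\rangle\bigr),
\]
so you need $\delta(v_x)=\delta(v_y)=\delta(v_z)\neq 0$ in the $3$-dimensional space $\delta(Q)\cong Q/W$.

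Under the genericity condition $h^0(C,F(p))=h^0(C,F)$ for $p\in D$, which is missing from your list of bad loci, each $\delta(Q_p)$ is a plane. Three planes in a $3$-space need not share a line; it is the isotropy of $W$ that forces one. Identify $Q/W\cong W^{\vee}$ via $v\mapsto\sigma(v,\cdot)|_W$. Then $\delta(Q_z)$ is the annihilator of the line $W\cap(Q_x\oplus Q_y)$. If $\delta(v_x)=\delta(v_y)\neq 0$, that line is spanned by $v_x-v_y$, and $\sigma(v_x,v_x-v_y)=0$, so $\delta(v_x)\in\delta(Q_z)$. Hence $v_x-v_y$ and $v_y-v_z$ lie in $W$, and $h^0(C,E)=10-1=9$. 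This is the paper's construction. A Lagrangian degeneracy-class computation on $(\PP^1)^3$ would give existence only after you show the class is nonzero and control excess loci, and you have done neither.

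Your treatment of the exceptional loci also fails as stated. Proposition \ref{prop:twisted_bn} gives only a \emph{lower} bound on the dimension of components of $\mathfrak{bn}_{a,k}$, plus smoothness where $\mu_{F,D}$ is injective, which is not known there. It therefore cannot show that $h^0(C,F)\geq 8$, $h^0(C,E)\geq 10$, and the other bad cases occur only in codimension $\geq 2$; that needs an upper bound. The paper instead imposes these as generality assumptions, uses the Brill--Noether count only for destabilizing subbundles, and skips the stability of $E$.

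The dimension count you propose in Step 1 is unnecessary. Distinctness of $x,y,z$ is built into both $\mathfrak{Tr}$ and $\mathcal{V}$. Also $L\not\subseteq\PP_E$ is automatic: a line in $\PP_E$ meeting two distinct rulings would map onto $C$, which is impossible for $g\geq 1$.
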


\begin{proof} We pick a general element $[C,E]\in \mathfrak{Tr}$ and distinct points $x,y,z\in C$ such that the trisecant line $L \subseteq \PP H^0(C,E)^{\vee}\cong \PP^8$ satisfies (\ref{eq:trisecant}). The points $x', y', z'$ determine quotients 
$E_{|x}\twoheadrightarrow \mathbb C_{x}$, $E_{|y}\twoheadrightarrow \mathbb C_{y}$, respectively   
$E_{|z}\twoheadrightarrow \mathbb C_{z}$ and  an elementary transformation 

\begin{equation}\label{eq:eltransf2}
0\longrightarrow F\longrightarrow E\xrightarrow{\qquad (x',y',z') \qquad} \mathbb C_x\oplus \mathbb C_y\oplus \mathbb C_z\longrightarrow 0.
\end{equation}
Then the collinearity of $x', y', z'$ in $\PP^8$ is equivalent to $h^0(C,F)\geq h^0(C,E)-2=7$. Therefore a point $[C,E]\in \mathfrak{Tr}$ gives rise to a rank two vector bundle $F$ on $C$ with $\mbox{det}(F)=\omega_C(-x-y-z)$ and $h^0(C,F)\geq 7$.

\vskip 3pt

Furthermore, the vector bundle $F$ is stable. Indeed, otherwise we choose a maximal destabilizing sheaf $A\hookrightarrow F$, therefore $d=\mathrm{deg}(A)>\frac{\mathrm{deg}(F)}{2}$, that is, $d\in \{14, 15\}$, for $E$ was assumed to be stable.  We then also have an exact sequence
$$0\longrightarrow A\longrightarrow E\longrightarrow \omega_C\otimes A^{\vee}\longrightarrow 0,$$
therefore $9=h^0(C,E)\geq h^0(C,A)+h^0(C, \omega_C\otimes A^{\vee})$. It follows that $A$ is a linear system on $C$ having  Brill-Noether number $<-2$. This condition,  following \cite{EH2},  singles out a locus of codimension larger than $2$ in $\cM_{16}$. We conclude that $F$ is stable.

\vskip 4pt

Conversely, let us start with a stable vector bundle $F$ on $C$ with $\mbox{det}(F)\cong \omega_C(-D)$, where $D=x+y+z$ and with $h^0(C,F)=7$. We first observe by Riemann-Roch that $h^0(C, F(D))= 10$. Let $Q_x:=F(D)_{|x}$, $Q_y:=F(D)_{|y}$ and $Q_z:=F(D)_{|z}$ and then  set $Q:=Q_x\oplus Q_y\oplus Q_z\cong \mathbb C^6$. Since $\mbox{det}(F)\cong \omega_C(x+y+z)$, the vector space $Q$ is endowed with an alternating form given by 
$$s\wedge t\mapsto \mbox{Res}_x(s\wedge t)+\mbox{Res}_y(s\wedge t)+\mbox{Res}_z(s\wedge t).$$
\vskip 3pt

We  write down the cohomology exact sequence
$$0\longrightarrow H^0(C,F)\longrightarrow H^0(C, F(D)) \stackrel{\mathrm{pr}} \longrightarrow Q\stackrel{\delta}\longrightarrow H^1(C,F)\longrightarrow \H^1(C,F(D))\longrightarrow 0.$$
Set $W:=\mathrm{pr}\bigl(H^0(C,F(D)\bigr)$. Since $h^0(C,F)=7$, we obtain that $\mbox{dim}(W)=3$ and  in fact $W$  is a maximal isotropic subspace of $Q$. Under our generality assumptions we may also assume $h^0(C, F(x))=h^0(C,F(y))=h^0(C, F(z))=7$. It follows there exists unique $1$-dimensional subspaces $\langle v_x\rangle\in \PP(Q_x)$, $\langle v_y\rangle \in \PP(Q_y)$ and $\langle v_z\rangle \in \PP(Q_z)$ such that $\delta(v_x)=\delta(v_y)=\delta(v_z)\neq 0$. We then form the following elementary transformation
$$0\longrightarrow E\longrightarrow F(x+y+z)\longrightarrow \frac{Q_x}{\langle v_x\rangle}\oplus  \frac{Q_y}{\langle v_y\rangle} \oplus \frac{Q_y}{\langle v_y\rangle}\longrightarrow 0.$$
Then  $\mbox{det}(E)\cong \omega_C$ and having $h^0(C,E)=h^0\bigl(C, F(x+y+z)\bigr)-1=9$. The proof that $E$ is stable is standard and we skip it.

\vskip 4pt

We are left with showing that both descriptions of $\mathfrak{D}_2$ and $\vartheta(\mathfrak{Tr})$ are equivalent to requiring that the curve $[C]\in \cM_{16}$ possess a vector bundle $E\in SU_C(2,\omega,9)$ for which there exist distinct points $x,y,z\in C$ such that the evaluation map $$H^0(C,E)\longrightarrow E_{|x}\oplus E_{|y}\oplus E_{|z}$$ is not surjective, or equivalently, the dual map $E_{| x}^{\vee}\oplus E_{| y}^{\vee} \oplus E_{| z}^{\vee}\rightarrow H^0(C,E)^{\vee}$ is not injective. A non-zero element in the kernel corresponds to  $(\ell_x, \ell_y, \ell_z)\in E_{| x}^{\vee}\times E_{| y}^{\vee}\times E_{| z}^{\vee}$ such that for every section $s\in H^0(C,E)$ one has 
\begin{equation}\label{eq:depend}
\ell_x\bigl(s(x)\bigr)+\ell_y\bigl(s(y)\bigr)+\ell_z\bigr(s(z)\bigr)=0.
\end{equation}

The scroll $\PP_E$ being embedded in $\PP H^0(C,E)^{\vee}\cong \PP^8$, it follows that $\ell_x, \ell_y, \ell_z$ are necessarily non-zero, in particular they give rise to points $x'=[\ell_x]\in \PP_{E,x}$, $y'=[\ell_y]\in \PP_{E,y}$ and $z'=[\ell_z]\in \PP_{E,z}$. In this case, the condition  (\ref{eq:depend}) is equivalent to the points $x', y', z'\in \PP^8$ being collinear, which is precisely the condition (\ref{eq:trisecant}). The reverse implication is also immediate.   
\end{proof}

\begin{remark}
The construction of Proposition \ref{prop:d2=tris} can be reformulated in terms of projective geometry as follows. Starting with $E\in SU_C(2,\omega, 9)$ as above, recall that $L\subseteq \PP^8$ is the proper trisecant line to the scroll $\PP_E$ satisfying (\ref{eq:trisecant}). Let $\tau\colon \PP^8\dashrightarrow \PP^6$ be the projection of center $L$ and note that $\tau$ contracts the rulings $\PP_{E,x}, \PP_{E,y}, \PP_{E,z}$ of $\PP_E$ to three points $x''\in \langle L, \PP_{E,x}\rangle\cap \PP^6$, $y''\in \langle L, \PP_{E,y}\rangle \cap \PP^6$ and $z''\in \langle L,\PP_{E,z}\rangle \cap \PP^6$ respectively. Set 
$$\PP_F:=\tau(\PP_E)\subseteq \PP^6$$
and clearly this is the scroll corresponding to the vector bundle $F$ defined by the elementary transformation (\ref{eq:eltransf2}). Conversely, starting from a scroll $\PP_F\subseteq \PP^6$ as above and with points $x''\in \PP_{F,x}$, $y''\in \PP_{F,y}$ and $z''\in \PP_{F,z}$,  denoting by $H_F\in \bigl|\OO_{\PP_F}(1)\bigr|$ a linear section, we consider the rational map 
$$\nu\colon \PP_F\dashrightarrow \PP^8$$
induced by the linear system $\bigl|\I_{\{x'',y'',z''\}/\PP_F}(H_F+\PP_{F,x}+\PP_{F_y}+\PP_{F,z})\bigr|$. Then $\nu$ is the inverse map of $\tau_{\PP_E}$, in particular $\nu(\PP_F)=\PP_E$.
\end{remark}

\subsection{The determinantal structure of $\mathfrak{D}_2$}\label{precise-def-d2}

We now construct the divisor $\mathfrak{D}_2$ as the push-forward under the map $\vartheta\colon \mathcal{BN}\rightarrow \cM_{16}$ of a certain degeneracy locus. Let $p\colon \cC\rightarrow \mathcal{BN}$ be the universal curve and denote by $\E$ the universal rank $2$  bundle over $\cC$ normalized in such a way that $\mbox{det}(\E)\cong \omega_p$. 
We introduce the  relative quot scheme of length $3$ 
\begin{equation}\label{eq:U}
\mathfrak{u} \colon \cU=\mathrm{Quot}^3_{p}(\E) \longrightarrow \mathcal{BN}.
\end{equation} 
A general element of $\cU$ is a tuple $\bigl(C,E,x+y+z, \tau\bigr)$, where $E\in SU_C(2,\omega, 9)$ is a vector bundle, $x,y,z\in C$ are distinct and $E\twoheadrightarrow \tau $ is a quotient over a  $0$-dimensional scheme of length $3$ supported on the divisor $x+y+z$. Therefore, $\tau$ is given by $1$-dimensional quotients
 $\ell_x\in \PP\bigl(E_{|x}^{\vee}\bigr)$, $\ell_y\in \PP\bigl(E_{|y}^{\vee}\bigr)$ and $\ell_z\in \PP\bigl(E_{|z}^{\vee}\bigr)$.  We form the fibre product:
\[
\xymatrixcolsep{8pc}
\xymatrix{
\cC_{\cU}=\cU\times_{\mathcal{BN}} \cC \ar[r]^{\pi_2} \ar[d]^{\pi_1}& \cC \ar[d]^{p}\\
\cU \ar[r]^{\mathfrak{u}} & \mathcal{BN}}
\]
One has a tautological exact sequence over $\cC_{\cU}$ 
$$0\longrightarrow \F \longrightarrow \pi_2^*\E \longrightarrow \mathcal{Q}_{\mathrm{univ}}\longrightarrow 0,$$
where $\mathcal{Q}_{\mathrm{univ}}$ is locally free.
We set $\cA:=\bigl(\pi_1)_*\bigl(\pi_2^*\E\bigr)\cong \mathfrak{u}^*(p_*\E)$. Note that $\cA$ is a reflexive sheaf of rank $9$ over $\cU$, locally free over the locus of points $(C,E,\tau)$ where $h^0(C,E)=9$. Next we introduce the locally free sheaf of rank $3$ over $\cU$
$$\cB:=(\pi_1)_*\bigl(\mathcal{Q}_{\mathrm{univ}}\bigr)$$ and we have an evaluation morphism
\begin{equation}\label{eq:global_ev}
\mathfrak{ev}\colon \cA\longrightarrow \cB
\end{equation}
whose fibre over a point $t:=\bigl(C, E, \tau\bigr)\in \cU$ is precisely the map $\mathrm{ev}$  considered in the Introduction via the morphism (\ref{eq:evxyz}).
\begin{definition}\label{def:d2}
We let $\cZ\subseteq \cU$ be the rank $2$ degeneracy locus of the morphism $\mathfrak{ev}$ and denote by $\cZ^{\circ}$ the subvariety of $\cZ$ of points $t\in \cU$ such that $\mbox{rk}(\mathfrak{ev}_t)=2$. We denote by $$\mathfrak{D}_2=\mathfrak{\vartheta\circ u}(\cZ).$$ 

%Finally, we denote by $\F:=\mbox{Ker}\bigl(\mathfrak{ev}_{|\cZ}\bigr)$ the corresponding kernel bundle. 

\end{definition}
Since $\mbox{dim}(\cU)=\mbox{dim}(\mathcal{BN})+6=51$, each component of $\cZ$ has dimension at least equal to $44=\mbox{dim}(\mathcal{BN})-1$. The fibre of $(\pi_1)_*(\F)$ over a point $t\in \cZ^{\circ}$  is canonically identified with $H^0(C,F)$, where the vector bundle $F$ is defined by an exact sequence
$$0\longrightarrow F\longrightarrow E\stackrel{\mathfrak{ev}_t}\longrightarrow \tau\longrightarrow 0.$$
For $t\in \cZ^{\circ}$, let $\mathrm{det}_F\colon \bigwedge^2 H^0(C,F)\rightarrow H^0\bigl(C,\omega_C(-x-y-z)\bigr)$ be the determinant map.

\begin{definition}\label{def:Z12}
Let $\cZ_{1}$ be the closed subvariety of $\cZ^{\mathrm{\circ}}$ consisting of points $t$ for which $\mathrm{det}_F$ fails to be surjective.
\end{definition}

By construction, each component of $\cZ_{1}$ has dimension at least $\mbox{dim}(\cZ)-9\geq 35$. We shall exhibit a component of $\cZ_1$ having dimension $35$. This implies the existence of a component of $\cZ$ of dimension $44$, which is responsible for the divisor $\dd2$.  

%For a general point $t=\bigl(C,E, x,y,z, \ell_x, \ell_y, \ell_z)\in \cZ_{2}$, if we construct the surface $X$ via (\ref{eq:X}), then note that the class $B\equiv H-C$ has all the attributes of a smooth curve of genus $4$, however in general $H^0(X,B)=0$. Requiring this class be effective, leads precisely to the linkage condition described in (\ref{subsect1}).

\section{The geometry of the divisor $\mathfrak{D}_2$}

The main goal of this Section is to prove that the divisor $\mathfrak{D}_2$ possesses a uniruled component covered by a family of $K_{\mm_{16}}$-negative rational curves, thus proving Theorem \ref{thm:bnr2}. To that end, one needs to establish the various transversality assumptions described in the Introduction. 

We start with a smooth curve $[C]\in \cM_{16}$ and with a very ample vector bundle $F$ with $h^0(C,F)=7$ and 
$\mbox{det}(F)\cong \omega_C(-x-y-z)$. As in the Introduction, we  denote by $\phi_F\colon C\rightarrow G(2, H^0(C,F)^{\vee}\bigr)\cong G(2,7)=:\GG$ the map given by $\phi_F(p):=F_{|p}^{\vee}$ for every $p\in C$. We consider furthermore the $12$-dimensional linear subspace 
$$\Pi:=\PP H^0\bigl(C, \omega_C(-x-y-z))^{\vee}\bigr) \subseteq \PP\Bigl(\bigwedge^2 H^0(C,F)^{\vee}\Bigr) \cong \PP^{20},$$ and introduce the following linear section of the Grassmannian
\begin{equation}\label{eq:X}
X:=\Pi\cdot \GG.
\end{equation}

If this intersection is transverse, then $X$ is a smooth surface and by the adjunction formula $\omega_X \cong \OO_X(1)$, that is, $X\subseteq \Pi\cong \PP^{12}$ is canonically embedded. We also compute $\OO_C(C)\cong \omega_C\otimes \mbox{det}(F)^{\vee}\cong \OO_C(x+y+z)$. From the exact sequence (\ref{eq:seq1}), we obtain that $\mbox{dim } \bigl|\OO_X(C)\bigr|=1$. We denote by $\Gamma_2\subseteq \mm_{16}$ the curve induced by this pencil on $X$ and determine its numerical parameters. From this it will follow that $\Gamma_2$ is a $K_{\mm_{16}}$-negative curve, which is the essential ingredient in the proof of Theorem \ref{main_thm}.

\begin{proposition}\label{prop:pencild2}
Assume there are only nodal curves in $\bigl|\OO_X(C)\bigr|$. Then the intersection numbers of $\Gamma_2$ with the standard generators of $CH^1(\mm_{16})$ are given by 
$$\Gamma_2\cdot \lambda=29, \ \ \ \Gamma_2\cdot \delta_0=189, \ \mbox{ and } \ \Gamma_2\cdot \delta_j=0 \ \ \mbox{ for } j=2, \ldots, 8.$$
\end{proposition}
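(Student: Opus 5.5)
We mimic the computation of Proposition \ref{numbers}. The pencil $\bigl|\OO_X(C)\bigr|$ on the smooth canonical surface $X=\Pi\cdot \GG\subseteq \PP^{12}$ has base locus of length $C^2=\deg \OO_C(C)=\deg \OO_C(x+y+z)=3$, namely the points $x,y,z$. Let $\epsilon\colon \widetilde{X}=\mathrm{Bl}_{3}(X)\rightarrow X$ be the blow-up at these three points. This gives a fibration $\widetilde{X}\rightarrow \PP^1$ whose fibres are the curves of the pencil, all of arithmetic genus $16$ and, by hypothesis, nodal. Its moduli map is $\Gamma_2$. We then apply the standard formulas for fibred surfaces (as in \cite{Tan}):
$$\Gamma_2\cdot \lambda=\chi(\widetilde{X},\OO_{\widetilde{X}})+g-1,\qquad \Gamma_2\cdot \delta=c_2(\widetilde{X})+4(g-1),$$
with $g=16$. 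Here $\delta$ is the total boundary class.

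For $\lambda$, we need $\chi(\OO_X)$. Since $X$ is a codimension $8$ linear section of $G(2,7)$ of dimension $2$, Lefschetz-type vanishing (Kodaira vanishing applied along the Koszul resolution of $\OO_X$ by $\bigwedge^j \OO_\GG(-1)^{\oplus 8}$, using $K_\GG=\OO_\GG(-7)$) gives $h^1(\OO_X)=0$. Also $h^2(\OO_X)=h^0(\omega_X)=h^0(\OO_X(1))=13$, because $X\subseteq \PP^{12}$ is linearly normal, again via the Koszul resolution and the projective normality of $\GG$. Hence $\chi(\OO_{\widetilde X})=\chi(\OO_X)=14$ and $\Gamma_2\cdot\lambda=14+15=29$, matching (\ref{eq:int2}).

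For $\delta_0$, we use $K_X^2=\deg X=\deg G(2,7)=42$, so $K_{\widetilde X}^2=42-3=39$. Noether's formula gives $c_2(\widetilde X)=12\cdot 14-39=129$, hence $\Gamma_2\cdot\delta=129+60=189$.

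It remains to show that all of this boundary degree lies on $\delta_0$, i.e.\ that $\Gamma_2\cdot \delta_j=0$ for $j\geq 1$. Equivalently, no member of the pencil is of compact type, meaning no member is a union $F+M$ with $F\cdot M=1$ at a node separating the curve. This is the step I expect to be the main obstacle. The approach I would try first is the one used in Proposition \ref{prop:irr}(i). Use ampleness of $\OO_X(C)$, or at least nefness and bigness together with the fact that $K_X=\OO_X(1)$ is very ample, and invoke \cite[Theorem A]{BL} to obtain $2$-connectedness of the members. Failing that, argue directly: if $C'=F+M$ with $F\cdot M=1$, the Hodge index theorem and adjunction on the canonical surface $X$ (no $(-1)$- or $(-2)$-curves meeting $K_X$ trivially, since $K_X$ is very ample) force $F^2\leq -1$. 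Then $F\cdot C'\leq 0$, so $F$ would be a fixed component or a contracted curve, which contradicts the base locus being just $x,y,z$ with $C'$ moving.

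A further point to check is that the separating-node contributions do not arise from the exceptional sections. Since $x,y,z$ are simple base points and the fibres are nodal with no compact-type members, the term $c_2(\widetilde X)+4(g-1)$ counts exactly the nodes of the fibres, all non-separating, and so equals $\Gamma_2\cdot\delta_0$.
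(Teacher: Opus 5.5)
Your computation of $\Gamma_2\cdot\lambda$ and of the total boundary degree is the paper's own. You blow up the three simple base points, apply \cite{Tan}, and use $\chi(\OO_X)=14$ together with $K_{\widetilde{X}}^2=\deg G(2,7)-3=39$, giving $29$ and $129+60=189$. You justify $\chi(\OO_X)=14$ via the Koszul resolution, whereas the paper only quotes it. The paper does not prove that no member of $\bigl|\OO_X(C)\bigr|$ has a separating node. It simply identifies $c_2(\widetilde{X})+4(g-1)$ with $\Gamma_2\cdot\delta_0$, so this is absorbed into the nodality/generality hypothesis (cf.\ ``under the same generality assumptions'' in the Introduction). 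You are right that this is needed, including for $j=1$: otherwise $189$ is only $\sum_{j\geq 0}\Gamma_2\cdot\delta_j$.

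Your attempt to prove it does not work, so as written this step is a gap.
\begin{itemize}
\item Ampleness of $\OO_X(C)$ is not available. Theorem \ref{prop:ample} concerns the surface $S\subseteq\PP^6$ used for $\mm_{15}$, not $X$. Here you only know that $\OO_X(C)$ is nef and big, so \cite[Theorem A]{BL} cannot be applied as in Proposition \ref{prop:irr}.
\item The direct argument also fails. If $C'=F+M$ with $F\cdot M=1$, nefness gives $F^2,M^2\geq -1$ and $F^2+M^2=C'^2-2=1$, while Hodge index only imposes $F^2M^2\leq 1$. Hence $(F^2,M^2)=(0,1)$ is numerically allowed, with $F$ through one base point and $M$ through the other two. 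Adjunction with $K_X$ very ample then only yields $p_a(F)\geq 2$, which is no contradiction.
\item In the case $(F^2,M^2)=(-1,2)$, the equality $F\cdot C=0$ just says $F$ is a component of that single fibre avoiding $x,y,z$. This is neither a fixed component nor in conflict with $\mathrm{Bs}\bigl|\OO_X(C)\bigr|=\{x,y,z\}$.
\end{itemize}
So either read the hypothesis as ``all members are nodal with no separating node'', which is what the paper's proof actually uses, or supply a genuinely different argument. One option would be a dimension count showing that the surfaces $X$ whose pencil contains such a member form a proper subfamily.
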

\begin{proof}
Since the curve $C$ can be chosen not to be trigonal, $h^0(C, \OO_C(x+y+z))=1$, therefore since $C^2=3$, the base locus of $\bigl|\OO_X(C)\bigr|$ consists of the points $x, y, z$. We denote by $\widetilde{X}:=\mbox{Bl}_{\{x,y,z\}}(X)$ the blow-up of $X$ at these points and by $f\colon \widetilde{X}\rightarrow \PP^1$ the fibration induced by the proper transform of $\bigl|\OO_X(C)\bigr|$. Then $\Gamma_2\subseteq \mm_{16}$ is the moduli map corresponding to this fibration of stable curves.

\vskip 3pt

To compute the intersection numbers of $\Gamma_2$ we proceed along the lines of Proposition \ref{numbers}. First, since $H^1(\widetilde{X}, \OO_{\widetilde{X}})=0$ and $H^2(\widetilde{X}, \OO_{\widetilde{X}}) \cong H^2(X,\OO_X)\cong H^0(X, \OO_X(1))^{\vee}$,   we have 
$$\Gamma_2\cdot \lambda=\chi(\widetilde{X}, \OO_{\widetilde{X}})+g-1=h^2(X,\OO_X(1))+1+g-1=13+16=29.$$
Then, we use \cite{Tan}, to conclude via the Noether formula that  
$$\Gamma_2\cdot \delta_0=c_2(\widetilde{X})+4(g-1)=12\chi(X, \OO_X)-K_{\widetilde{X}}^2+60.$$
As we have seen,  $\chi(X, \OO_X)=14$, whereas  $K_{\widetilde{X}}^2=K_X^2-3=\mbox{deg}(G(2,7))-3=39$, therefore we compute that $\Gamma_2\cdot \delta_0 =12\cdot 14-39+60=189$, which finishes the proof.
\end{proof}

\subsection{Constructing a smooth linear section of $\GG$.}\label{subsect1} In order to be able to apply Proposition \ref{prop:pencild2} and ultimately establish Theorem \ref{thm:bnr2}, we need to exhibit a component of the trisecant divisor $\mathfrak{Tr}$ on $\mathcal{SU}_{16}(2,\omega,9)$, whose general point $[C, E]$ 
satisfied two properties:

\begin{itemize}
\item It induces via the intersection (\ref{eq:X}) a \emph{smooth} canonical surface $X=\Pi\cdot \GG$.
\item This construction deforms in such a way that it fills-up a divisor in $\cM_{16}$. This will turn out to be the divisor $\dd2$ described in the Introduction.
\end{itemize}

\vskip 4pt

We use to that end a linkage construction.  Let us start with a general curve $B$ of genus $4$ and with a general stable vector bundle $F$ on $B$ of rank $2$ and degree $15$. Then $H^1(B,F)=0$ and by Riemann-Roch we compute $h^0(B,F)=9$. Set 
$$L:=\mbox{det}(F)\in \mbox{Pic}^{15}(B),$$ therefore $h^0(B,L)=12$. Since $F$ is a globally generated, using e.g. \cite[Corollary 3.5]{PR}, we can choose a general $7$-dimensional subspace $V\subseteq H^0(B,F)$ such that the evaluation morphism 
$V\otimes \OO_B\rightarrow F$ remains surjective. We denote by 
$$\phi_{V}\colon B\longrightarrow G(2,V^{\vee})\cong \GG$$
the induced map, then let us introduce the determinant map 
\begin{equation}\label{eq:detV}
\mathrm{det}_V\colon \bigwedge^2 V\longrightarrow H^0(B,L).
\end{equation}

\begin{lemma}\label{lemma2+5}
There exists a triple $(B,F,V)$ as above such that $\mathrm{rk}(\mathrm{det}_V)=11$ and the map $V\otimes \OO_B\rightarrow F$ is surjective.
\end{lemma}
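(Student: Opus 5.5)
Since $h^0(B,L)=12$ and $\dim \bigwedge^2 V=21$, the generic expectation is that $\mathrm{det}_V$ is surjective. The content of the lemma is therefore to produce one triple where the image is a hyperplane, while keeping $V\otimes\OO_B\to F$ surjective. Both conditions are Zariski open or closed on the parameter space of triples $(B,F,V)$. Surjectivity of the evaluation map is open, and $\mathrm{rk}\leq 11$ is closed. So the plan is to exhibit an explicit example rather than argue generically.

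\textbf{Construction.} I would take $F$ to be a direct sum, or an extension, of two line bundles, $L_1\oplus L_2$ with $\deg L_1=7$ and $\deg L_2=8$. Choose $V=V_1\oplus V_2$ with $V_i\subseteq H^0(B,L_i)$ of dimensions $3$ and $4$, each base point free. Then $\bigwedge^2 V=\bigwedge^2V_1\oplus\bigwedge^2V_2\oplus (V_1\otimes V_2)$, and the determinant kills the first two summands. Hence $\mathrm{Im}(\mathrm{det}_V)$ is the image of the multiplication map $V_1\otimes V_2\to H^0(B,L_1\otimes L_2)$, which has rank at most $12$.

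To force rank exactly $11$, I would pick $V_1,V_2$ so that the multiplication has a one-dimensional kernel beyond the expected one, for instance by letting both factor through a common pencil. Concretely, write $L_1=A\otimes M_1$ and $L_2=A\otimes M_2$ with $A$ a $g^1_3$ on $B$, and take $V_i\supseteq H^0(A)\cdot s_i$. Each such coincidence produces a relation $a\cdot s_1\otimes b\cdot s_2-b\cdot s_1\otimes a\cdot s_2$ in the kernel. The dimension count of $V_1\otimes V_2\to H^0(L_1L_2)$ must then be checked to give rank exactly $11$. This uses base-point-free pencil trick estimates, namely $\ker(H^0(A)\otimes W\to H^0(A\otimes W))\cong H^0(W\otimes A^{-1})$. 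I would tune degrees so that exactly one relation survives.

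\textbf{Stability and deformation.} $F=L_1\oplus L_2$ is not stable, but the lemma only asks for the existence of a triple with the two listed properties, with $F$ of rank $2$ and degree $15$. If stability is implicitly needed, I would deform $F$ to a non-split extension $0\to L_1\to F\to L_2\to 0$ (with $\deg L_1 =7<15/2$, a general extension is stable). I would then lift $V$, using that $H^1(L_1)=0$ for degree $7>2g-2$ so sections of $L_2$ lift. The rank condition $\mathrm{rk}\le 11$ is closed, so it does not automatically persist. I would handle this by keeping $V$ inside the kernel of a fixed functional on $H^0(L)$ that is preserved in the family, or by realizing the rank drop via the linkage $P\cap\GG=C+B$, so that $\mathrm{rk}\le11$ is automatic.

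\textbf{Main obstacle.} The hard step is arranging simultaneously $\mathrm{rk}(\mathrm{det}_V)=11$ exactly, neither $12$ nor smaller, together with global generation. I would first try the direct computation for the split case with a trigonal $B$, since genus $4$ curves are trigonal. If the rank comes out too small, I would perturb one of the $V_i$ to remove the extra relations. Lower semicontinuity then guarantees that rank $11$ is achieved somewhere between rank $\le 10$ and generic rank $12$.
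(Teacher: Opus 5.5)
Your split-bundle step is sound, and it can be completed as you suggest. Take $V_1=H^0(A)s_1+\langle c\rangle$ with $c$ nonvanishing on $Z(s_1)$, and $V_2\supseteq H^0(A)s_2$ general. The base-point-free pencil trick then shows that the kernel of $V_1\otimes V_2\to H^0(L)$ is exactly spanned by your relation, so the rank is $11$ and both $V_i$ are base point free.

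\textbf{The gap: no stable $F$.} $F=L_1\oplus L_2$ is not a triple ``as above'': there $F$ is a (general) stable bundle, and the paper's proof explicitly deforms to a stable $F$. You rightly note that $\mathrm{rk}\le 11$ is closed and will typically not survive deformation to a non-split extension, since the sections $\det(\bigwedge^2\tilde V_2)$ generally leave the hyperplane $V_1\cdot V_2$. Neither remedy you offer is an argument. ``Keeping $V$ in the kernel of a fixed functional'' restates what has to be proved, with no reason a non-split extension can achieve it. Producing the rank drop through a linkage $P\cdot\GG=B+C$ is circular, because that linkage is built from the triple this lemma is meant to supply. Your closing appeal to lower semicontinuity is false: ranks can jump by two with no intermediate value, e.g.\ $\mathrm{diag}(1,\dots,1,t,t)$ or any skew-symmetric family.

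\textbf{What is missing.} You need a mechanism that pins the rank at $11$ on the non-split side, and your setting has one. For any extension $0\to L_1\to F_e\to L_2\to 0$ and any lift $\tilde V_2\subseteq H^0(F_e)$ of $V_2$, one has $\det(s\wedge\tilde t)=s\cdot t$ for $s\in V_1$. Hence $V_1\cdot V_2\subseteq\mathrm{Im}(\det_V)$, and $\mathrm{rk}\ge 11$ holds automatically. With $\eta$ the functional cutting out $V_1\cdot V_2$, the condition $\mathrm{rk}\le 11$ becomes the vanishing of $\eta\circ\det$ on $\bigwedge^2\tilde V_2$. These are six conditions, linear in the pair $(e,\phi)$, where $e\in H^1(L_1\otimes L_2^{\vee})$ (dimension $4$) and $\phi\in\mathrm{Hom}(V_2,H^0(L_1)/V_1)$ (dimension $4$). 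You would still have to show that the solution space is not contained in $\{e=0\}$. Once that holds, non-split extensions are automatically stable, since a degree-$8$ subbundle other than $L_1$ would split the sequence. Moreover $V$ still generates $F_e$.

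\textbf{Comparison with the paper.} The paper applies the same principle from the other side. It starts from $A\oplus M$ with degrees $5$ and $10$, where the rank is only $10$. It takes $V=\mathrm{pr}^{-1}(W)$ inside an extension, so that $H^0(A)\cdot W$ is forced into the image. This reduces the lemma to a residual condition: $\bigwedge^2 W\to H^0(L)/H^0(A)\cdot W$, a map to a $2$-dimensional space, must have rank one. That condition is checked on the explicit example of Proposition \ref{prop:transv3}.
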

\begin{proof}
We pick general line bundles $A\in \mbox{Pic}^5(B)$ and $M\in \mbox{Pic}^{10}(B)$ and specialize $F$ to the direct sum $F_{\mathrm{spl}}:=A\oplus M$, therefore $L=A\otimes M$. Choose a $5$-dimensional subspace $W\subseteq H^0(B,M)$ and set $V_{\mathrm{spl}}:=H^0(A)\oplus W$. It is easy to show that the multiplication map $H^0(C,A)\otimes W\rightarrow H^0(L)$ is injective. Since $\bigwedge^2 H^0(C,A)\oplus \bigwedge^2 W\subseteq \mbox{Ker}\bigl(\mathrm{det}_{V_{\mathrm{spl}}}\bigr)$, it follows that $\mbox{rk}\bigl(\mathrm{det}_{V_{\mathrm{spl}}}\bigr)=10$. Note that $V_{\mathrm{spl}}\otimes \OO_B\rightarrow F_{\mathrm{spl}}$ is surjective. 

\vskip 3pt

We deform this example and  consider a general extension of the form 
$$0\longrightarrow A\longrightarrow F\stackrel{\mathrm{pr}}\longrightarrow M\longrightarrow 0,$$
where we vary $A\in \mbox{Pic}^5(B)$ and $M\in \mbox{Pic}^{15}(C)$. The map $\mathrm{pr}\colon H^0(B,F)\rightarrow H^0(B,M)$ is surjective and we choose a general $5$-dimensional subspace $W\subseteq H^0(B,M)$ and set $V:=\mathrm{pr}^{-1}(W)\subseteq H^0(B,F)$. Using the  map 
$$\mbox{Ext}^1(M,A)\longrightarrow \mathfrak{Bun}_2(C)$$
into the Artin stack of all rank $2$ vector bundles on $C$, we can deform the split bundle $F_{\mathrm{spl}}$ into a stable bundle $F$ with $\mbox{det}(F)\cong L$. To get $\mbox{rk}(\mathrm{det}_V)=11$, it suffices to choose a subspace $W\subseteq H^0(B,M)$ such that the map 
$$\bigwedge^2 W\longrightarrow \frac{H^0(B, A\otimes M)}{H^0(B,A)\otimes W}$$
has rank precisely one. Proving this is straightforward, but we can also refer directly to Proposition \ref{prop:transv3}. 
\end{proof}

\vskip 4pt

Using Lemma \ref{lemma2+5} we return to the initial setting and having $B$ and $F$, we choose a subspace $V\in G\bigl(7, H^0(B,F)\bigr)$ such that $\mbox{rk}(\mathrm{det}_V)=11$.  For such a choice,  the span of the Pl\"ucker image of $B$, that is,
$$\bigl\langle \phi_V(B)\bigr\rangle =\PP\bigl(\mbox{Im}(\mathrm{det}_V^{\vee})\bigr):=\Lambda$$ 
is a $10$-dimensional subspace of $\PP\bigl(\bigwedge^2 V^{\vee}\bigr)\cong \PP^{20}$. 

\vskip 4pt
We have $B\subseteq \GG\cdot \Lambda$ and the intersection $\GG\cdot \Lambda$ is transverse along $B$ if and only if $\mathbb{T}_p(B)=\mathbb{T}_p(\GG)\cap \Lambda$, for each $p\in B$. This condition in turn is equivalent to the surjectivity of the \emph{conormal morphism} of sheaves 
$$\epsilon\colon \mbox{Ker}(\mathrm{det}_V)\otimes \OO_B\cong \OO_B^{\oplus 10}\longrightarrow N_{B/\GG}^{\vee}(1),$$
where we use the canonical identification of stalks for each $p\in B$
$$\mbox{Coker}(\epsilon)_p\cong \left(\frac{\mathbb{T}_p(X)\cap \Lambda}{\mathbb{T}_p(B)}\right)^{\vee}\otimes \OO_B(1)_{p}.$$
Note that  $\mbox{Ker}(\mathrm{det}_V)\cong H^0\bigl(\PP^{20}, \I_{\Lambda/\PP^{20}}(1)\bigr)$.
\vskip 4pt

Since $\mbox{Im}(\mathrm{det}_V)\subseteq H^0(B,L)$ is a hyperplane, we can choose $\eta\in H^0(B,L)^{\vee}$ such that $\mbox{Ker}(\eta)=\mbox{Im}(\mathrm{det}_V)$. We then have an associated alternating form 
$$\chi_V\colon H^0(B,F)\otimes H^0(B, F) \longrightarrow \mathbb C, \ \ \ s\wedge t\mapsto \eta\bigl(\mathrm{det}_V(s\wedge t)\bigr).$$
Since, by definition, the $7$-dimensional subspace $V\subseteq H^0(B,F)$ is isotropic for $\chi_V$, it follows that $\mbox{rk}(\chi_V)\leq 4$.

\begin{definition}\label{def:transv}
We say that a triple $(B, F, V)$ as above is \emph{transverse} if $\mbox{rk}(\mathrm{det}_V)=11$, the evaluation map $\phi_V\colon B\rightarrow G(2,V^{\vee})$ is an embedding and the conormal morphism of sheaves
$\epsilon \colon \OO_B^{\oplus 10}\rightarrow N_{B/\GG}^{\vee}(1)$
is surjective.
\end{definition}

\begin{proposition}\label{eq:transv3}
There exists a transverse triple $(B,F,V)$ as above.
\end{proposition}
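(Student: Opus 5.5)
Transversality is an open condition on triples $(B,F,V)$, where $B$ varies in $\cM_4$, $F$ over the stable rank $2$ bundles of degree $15$, and $V$ over the locus in $G(7,H^0(B,F))$ where $\mathrm{rk}(\mathrm{det}_V)\leq 11$. It therefore suffices to exhibit one triple satisfying the three conditions of Definition \ref{def:transv}. I would use the degeneration already set up in Lemma \ref{lemma2+5}: $F$ is an extension $0\to A\to F\to M\to 0$ with $A\in \mathrm{Pic}^5(B)$ and $M\in\mathrm{Pic}^{10}(B)$, and $V=\mathrm{pr}^{-1}(W)$ with $W\subseteq H^0(B,M)$ chosen so that $\bigwedge^2 W\to H^0(A\otimes M)/H^0(A)\otimes W$ has rank one. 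Lemma \ref{lemma2+5} already gives $\mathrm{rk}(\mathrm{det}_V)=11$ and surjectivity of $V\otimes\OO_B\to F$. It remains to check that $\phi_V$ is an embedding and that the conormal map $\epsilon$ is surjective. Both are open conditions on the irreducible family of such triples, so checking them at one member suffices.

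For the embedding property, the plan is to use that $\phi_V$ composed with Plücker is the map given by the $11$-dimensional subsystem $\mathrm{Im}(\mathrm{det}_V)\subseteq H^0(B,L)$. Since $L$ has degree $15\geq 2g+1+4$, a general hyperplane of $H^0(B,L)$ remains very ample. I would argue that the hyperplane $\mathrm{Ker}(\eta)$ arising from the construction is general enough. The reason is that as $W$ and the extension class vary, $\eta$ sweeps out an open subset of $\PP H^0(B,L)^{\vee}$, while the bad hyperplanes, namely those containing $H^0(L(-p-q))$, form a locus of dimension $2<11$.

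The main obstacle is the surjectivity of $\epsilon\colon \OO_B^{\oplus 10}\to N^{\vee}_{B/\GG}(1)$. Here $N_{B/\GG}$ has rank $9$, so $N^{\vee}_{B/\GG}(1)$ has rank $9$ and degree $9\cdot 15-\deg N_{B/\GG}$. We have $\deg N_{B/\GG}=7\cdot 15+2g-2=111$, using $K_{\GG}=\OO(-7)$, so $N^{\vee}(1)$ has degree $24$. A map from a trivial rank $10$ bundle onto it is expected to fail to be surjective in codimension $2$, hence at no point of a curve; that parameter count is the heuristic. To make it rigorous, I would identify the fibre of $\mathrm{Coker}(\epsilon)$ at $p$ with the tangent directions of $\GG\cap\Lambda$ at $p$ that are transverse to $B$. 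Using the description $T_p\GG=\mathrm{Hom}(F_p^{\vee},V^{\vee}/F_p^{\vee})$, I would reduce the problem to showing that for each $p\in B$ the linear forms in $\mathrm{Ker}(\mathrm{det}_V)$ cut out only $T_pB$ inside $T_p\GG$. I would first verify this for the split triple $(A\oplus M, H^0(A)\oplus W)$ plus a rank one perturbation. The linear sections of $G(2,7)$ then decompose according to the $5\times 5$ block structure, and the condition can be checked blockwise using projective normality of $A$ and $M$. If the split case fails at finitely many points, I would then use the extension class to deform those points away.
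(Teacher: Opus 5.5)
\textbf{There is a genuine gap.} Your reduction to exhibiting a single triple is sound, and it is also the paper's strategy. But the proposal never actually exhibits such a triple: beyond $\mathrm{rk}(\mathrm{det}_V)=11$, both remaining conditions are argued only heuristically.

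For the embedding, you assert without proof that $\eta$ sweeps out an open subset of $\PP H^0(B,L)^{\vee}$ as $W$ and the extension class vary. (The locus of hyperplanes containing some $H^0(L(-p-q))$ is $3$-dimensional, not $2$; this is harmless.)

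The real gap is the surjectivity of $\epsilon$. The codimension-$2$ count applies to a general map $\OO_B^{\oplus 10}\to N^{\vee}_{B/\GG}(1)$, not to this very special one. Moreover, your proposed degeneration cannot be exploited by semicontinuity:
\begin{itemize}
\item In the split configuration $F=A\oplus M$, $V=H^0(A)\oplus W$, one has $\phi_V(p)=A_p^{\vee}\oplus M_p^{\vee}$. So $B$ lies on the Segre fivefold $\PP^1\times\PP^4=\GG\cap\PP\bigl(H^0(A)^{\vee}\otimes W^{\vee}\bigr)$.
\item This $\PP^9$ is precisely the span of $B$, since $H^0(A)\otimes W\rightarrow H^0(L)$ is injective. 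Hence $\mathrm{rk}(\mathrm{det}_V)=10$ there, and $\mathbb T_p(\GG)\cap\Lambda\supseteq\mathbb T_p(\PP^1\times\PP^4)$ is at least $5$-dimensional at \emph{every} $p\in B$.
\item The same holds for the limit of the spans $\Lambda_e$ as the extension class $e\to 0$, because $\mathrm{Im}(\mathrm{det}_{V_e})\supseteq H^0(A)\cdot W$ for all $e$.
\end{itemize}
So the split case does not fail ``at finitely many points''. It fails everywhere, maximally, and gives no information about nearby non-split triples. Extracting transversality from it would require a genuine higher-order analysis in $e$, which you do not provide, and ``deform those points away'' has no mechanism behind it. (Also, $A$ is a pencil, so ``projective normality of $A$'' does not make sense.)

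What is missing is an actual verification at one triple. The paper supplies exactly this by explicit computation over $\mathbb F_{17}$ (Propositions \ref{prop:transv3} and \ref{prop:transv4}):
\begin{itemize}
\item a hyperelliptic curve $B$ of bidegree $(5,2)$;
\item a non-split extension $F$ given by the explicit alternating map $d$;
\item an explicit $W$, with annihilator $\eta$ certifying rank $11$;
\item very ampleness of $\bigl|\mathrm{Im}(\mathrm{det}_V)\bigr|$, giving the embedding;
\item surjectivity of $\epsilon$, by checking that three specified $9\times 9$ minors of $R=J_{\GG}\cdot T$ have no common zero on $B(\overline{\kk})$.
\end{itemize}
Openness then lifts this example to characteristic zero. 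Note that Lemma \ref{lemma2+5}, which you invoke for the rank-$11$ condition, itself defers to this same example.
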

\begin{proof} We shall construct in Section \ref{sect:transv} one such triple, see in particular Propositions \ref{prop:transv3} and \ref{prop:transv4}.  Not to derail the flow of exposition, we shall proceed assuming this fact.
\end{proof}

\begin{proposition}\label{prop:excess}
For a transverse triple $(B,F,V)$ as above, we have the scheme-theoretical equality
$$\Lambda\cdot\GG=B+x+y+z,$$
where $x,y,z$ are three distinct points not lying on the curve $B$.
\end{proposition}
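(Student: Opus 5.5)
The plan is to combine an excess intersection computation on $\Lambda\cap\GG$ with the transversality hypotheses of Definition \ref{def:transv}. First we note that $\Lambda\cong\PP^{10}$ is cut out in $\PP^{20}$ by the $10$ hyperplanes corresponding to $\mathrm{Ker}(\mathrm{det}_V)\cong H^0\bigl(\PP^{20},\I_{\Lambda/\PP^{20}}(1)\bigr)$. Hence $\Lambda\cdot\GG$ is the zero scheme of a section of $\OO_{\GG}(1)^{\oplus 10}$ on the $10$-dimensional Grassmannian. Its expected dimension is $0$ and its total degree is $\mathrm{deg}(\GG)=42$.

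The first step is to show that $B$ is a connected component of $\Lambda\cap\GG$, and that near $B$ this scheme is exactly $B$. Since $\phi_V$ is an embedding and the conormal morphism $\epsilon$ is surjective, we have $\mathbb{T}_p(\Lambda\cap\GG)=\mathbb{T}_p(\GG)\cap\Lambda=\mathbb{T}_p(B)$ for every $p\in B$. So the scheme $\Lambda\cap\GG$ is smooth of dimension $1$ at every point of $B$, and therefore coincides with $B$ in a neighbourhood of $B$. It follows that $\Lambda\cdot\GG=B\sqcup Z$, where $Z$ is a $0$-dimensional scheme disjoint from $B$. (Note that $\mathrm{rk}(\mathrm{det}_V)=11$ is exactly what makes $\Lambda=\langle B\rangle$ a $\PP^{10}$.)

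The second step computes the contribution of $B$ to the intersection product by Fulton's excess intersection formula. The excess bundle is $\OO_B(1)^{\oplus 10}/N_{B/\GG}$, which is locally free because $\epsilon$ is surjective, and the contribution of $B$ is its degree:
$$10\,\mathrm{deg}\,\OO_B(1)-\mathrm{deg}\,N_{B/\GG}=10\cdot 15-\bigl(\mathrm{deg}\,T_{\GG|B}-\mathrm{deg}\,T_B\bigr)=150-\bigl(7\cdot 15+6\bigr)=39.$$
Here we used $\mathrm{deg}\,\OO_B(1)=\mathrm{deg}(L)=15$, $\omega_{\GG}=\OO_{\GG}(-7)$ and $2g(B)-2=6$. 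Consequently $Z$ has length $42-39=3$.

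The remaining, and main, obstacle is to show that $Z$ is reduced, i.e. consists of three distinct points $x,y,z$, each of them then automatically off $B$. Reducedness of $Z$ means that $\Lambda$ meets $\GG$ transversally at the points of $Z$, and this is an open condition on triples $(B,F,V)$. I would therefore try to verify it on the explicit transverse triple constructed in Section \ref{sect:transv} (Propositions \ref{prop:transv3} and \ref{prop:transv4}), by checking directly that the three residual $2$-planes are distinct and that $\mathbb{T}_q(\GG)\cap\Lambda=\{q\}$ at each of them. I would then conclude for a general transverse triple by openness, shrinking the notion of transverse triple if necessary. Failing an explicit check, the fallback is a dimension count: the locus of triples for which $Z$ is non-reduced, for instance because some point of $\Lambda\cap\GG$ is tangent, has positive codimension in the irreducible parameter space of triples.
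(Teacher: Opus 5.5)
Your computation of the length of the residual scheme is the paper's argument: the same excess sequence $0\to N_{B/\GG}\to \OO_B(1)^{\oplus 10}\to\mathfrak{Obs}\to 0$, the same degrees $150$, $111$, $39$, and the same conclusion $42-39=3$. Your first step shows, via Zariski tangent spaces, that $\Lambda\cap\GG$ coincides with $B$ near $B$. This spells out something the paper uses silently: $B$ is a connected component of the intersection scheme. That fact, together with the surjectivity of $\epsilon$, is exactly what the excess formula needs.

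You depart from the paper only at the final step, and there your caution is justified. The paper says that reducedness of the residual \emph{follows from} the transversality of $(B,F,V)$. But the three conditions of Definition \ref{def:transv} are all conditions at points of $B$, and they cannot detect two residual points colliding away from $B$.

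The same objection applies to a claim you make without proof. Knowing that $B$ is open and closed in $\Lambda\cap\GG$ does not make the complement $Z$ zero-dimensional, and the count ``length $3$'' presupposes this. So neither your argument nor the paper's proves the statement for an arbitrary transverse triple.

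What is needed is the check you sketch, actually carried out. On the explicit triple over $\mathbb{F}_{17}$ from Section \ref{sect:transv}, verify that the residual ideal $(I_{\Lambda\cap\GG}:I_B)$ defines three reduced points; the paper tacitly asserts this later when it calls $D$ reduced. Finiteness and reducedness of the residual are open conditions in families where $B$ stays a connected component. Openness then gives the proposition for a general transverse triple on that component, which is all that is used afterwards.

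Your dimension-count fallback does not stand on its own. Showing that the non-reduced locus is proper requires either a good point or a genuine incidence computation. Also, irreducibility of the space of transverse triples has not been established.
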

\begin{proof}
We apply the excess intersection formula \cite[Theorem 13.3]{EH3} for the non-proper intersection $\Lambda\cdot \GG$ with respect to the $1$-dimensional excess component $B$. The excess (obstruction) bundle $\mathfrak{Obs}$ has rank $1$ and is obtained from the following exact sequence 
$$0\longrightarrow N_{B/\GG} \longrightarrow N_{\Lambda/\PP^{20}}\otimes \OO_B \longrightarrow \mathfrak{Obs}\longrightarrow 0.$$
Clearly $N_{\Lambda/\PP^{20}}\otimes \OO_B \cong L^{\oplus 10}$, hence $\mbox{deg}\bigl(N_{\Lambda/\PP^{20}}\otimes \OO_B\bigr)=150$. From the exact sequence
$$0\longrightarrow T_B\longrightarrow T_{\GG|B}\longrightarrow N_{B/\GG}\longrightarrow 0,$$
since $K_{\GG}=\OO_{\GG}(-7)$, we compute that $\mbox{deg}(N_{B/\GG})=7\mbox{deg}(L)+2g(B)-2=111$. We thus obtain $\mbox{deg}(\mathfrak{Obs})=150-111=39$. Since the degree of the Grassmannian $\GG$ equals $42$, it follows that the residual scheme $\Lambda\cdot \GG\setminus B$ has length $\mbox{deg}(\GG)-\mbox{deg}(\mathfrak{Obs})=3$. That this is a smooth scheme follows from the fact that the triple $(B,F,V)$ was chosen to be transverse in the sense of Definition \ref{def:transv}.
\end{proof}

Having fixed a transverse triple $(B,F,V)$, we choose a general $12$-dimensional linear space 
\begin{equation}\label{eq:pi}
\Lambda\subseteq \Pi \subseteq \PP^{20}=\PP\Bigl(\bigwedge^2 V^{\vee}\Bigr),
\end{equation}
and the associated surface section $X=\GG\cdot \Pi$ introduced in (\ref{eq:X}). By Bertini's Theorem, coupled with the fact that $\mathbb{T}_p(B)=\mathbb{T}_p(\GG)\cap \Lambda$ for all points $p\in B$,  we obtain that $X$ is a smooth canonical surface containing $B$. There exists a pencil of $11$-dimensional linear spaces 
$\Lambda\subseteq P_t\subseteq \Pi$ containing $\Lambda$ and  we define the residual curve $C_t\subseteq X$ by the relation 
\begin{equation}\label{eq:C_t}
P_t\cdot X=B+C_t.
\end{equation}

We have the following result which leads to a point in the trisecant divisor $\mathfrak{Tr}$:

\begin{proposition}\label{prop:residual16} 
The pencil $\{C_t\}_{t\in \PP^1}$ on $X$ consists entirely of nodal curves of genus $16$ such that $\OO_{C_t}(1)\cong \omega_{C_t}(-x-y-z)$ for each $t\in \PP^1$. Moreover, the base locus of the pencil consists of the points $x,y,z\in X$.
\end{proposition}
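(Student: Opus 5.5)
We work on the smooth canonical surface $X=\GG\cdot\Pi\subseteq \Pi\cong\PP^{12}$, where $\omega_X\cong\OO_X(1)$, $X$ is regular, $\chi(X,\OO_X)=14$ and $\deg X=42$. The plan has three steps: compute the numerical invariants of $B\subseteq X$, identify the residual class $C_t\equiv H-B$ together with its genus, adjoint bundle and base locus, and finally address nodality, which we expect to be the main obstacle.

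\textbf{Invariants of $B$.} Since $B$ is a smooth genus $4$ curve of degree $H\cdot B=\deg L=15$ on $X$ with $K_X=H$, adjunction gives
$$B^2=2g(B)-2-K_X\cdot B=6-15=-9.$$
Write $H$ for the hyperplane class on $X$ and set $C_t\equiv H-B$. Then
$$C_t^2=H^2-2H\cdot B+B^2=42-30-9=3, \qquad K_X\cdot C_t=42-15=27,$$
so $p_a(C_t)=\frac{1}{2}(3+27)+1=16$ and $\deg C_t=27=2\cdot 16-2-3$. Moreover $B\cdot C_t=H\cdot B-B^2=15+9=24$.

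\textbf{The residual pencil.} The hyperplanes of $\Pi$ containing $\Lambda$ form a pencil, so $h^0(X,\I_{\Lambda\cap X}(1))\geq 2$, and $|\OO_X(H-B)|$ contains the pencil $\{C_t\}$. I would check that this is the whole linear system, i.e. $h^0(\OO_X(C_t))=2$. Take cohomology in $0\to\OO_X\to\OO_X(C_t)\to\OO_{C_t}(C_t)\to 0$, using $q(X)=0$; this reduces the claim to $h^0(\OO_{C_t}(C_t))=1$ for $\deg=3$, which holds once $C_t$ is not trigonal.

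Two pencil members meet in $C_t^2=3$ points. By Proposition~\ref{prop:excess}, $\Lambda\cdot X=\Lambda\cdot\GG=B+x+y+z$, and every $P_t$ contains $\Lambda$, so $x,y,z\in C_t$ for every $t$. Since $x,y,z\notin B$ are distinct, $\mathrm{Bs}\{C_t\}=\{x,y,z\}$ scheme-theoretically. Adjunction then gives
$$\omega_{C_t}=\OO_{C_t}(K_X+C_t)=\OO_{C_t}(1)\otimes\OO_{C_t}(C_t),$$
and $\OO_{C_t}(C_t)=\OO_{C_t}(x+y+z)$ because the base scheme of the pencil is $x+y+z$. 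Hence $\OO_{C_t}(1)\cong\omega_{C_t}(-x-y-z)$.

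\textbf{Nodality (main obstacle).} Because the three base points are simple (the excess scheme is reduced), the general $C_t$ is smooth at $x,y,z$ and every member is smooth there, so singularities lie off the base locus. By Bertini the general member is smooth away from $B$. Along $B$, the transversality $\mathbb T_p(B)=\mathbb T_p(\GG)\cap\Lambda$ of Definition~\ref{def:transv} means that the linear system $|\I_{\Lambda}(1)|$ on $\Pi$ cuts $X$ in $B$ plus a moving part. The pencil restricted to $B$ is the linear series $|\OO_B(C_t)|$ of degree $24$, and $C_t$ is singular at $p\in B$ exactly when $p$ is a ramification point of this pencil $B\to\PP^1$, i.e. a tangency of $B$ and $C_t$.

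Using the generality of $(B,F,V)$ and of $\Pi\supseteq\Lambda$, I would argue that this map has only simple ramification, at points where $C_t$ acquires an ordinary node. I would also show that no fibre contains two ramification points, and that the finitely many singular members coming from the discriminant of the pencil off $B$ are $1$-nodal by a standard Lefschetz-pencil argument for a general line through the point $[\Lambda]$ in the dual space. A tidier route is to first fix a general $\Pi$ and verify nodality via a dimension count, ruling out cusps, tacnodes and pairs of singular points by comparing with a deformation parameter space, as done in Proposition~\ref{prop:irr}. Integrality and $2$-connectedness of members would follow as there, from the numerics $C_t^2=3$ and the absence of low-degree curves on the general $X$.
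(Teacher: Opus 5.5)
Your numerical argument is correct and is the paper's proof. The paper uses exactly these steps: $B^2=-9$ by adjunction, $C_t^2=(H-B)^2=3$, $K_X\cdot C_t=27$, hence $p_a(C_t)=16$. Next, $x,y,z$ lie on every $C_t$ because $\Lambda\subseteq P_t$ and $x,y,z\notin B$, so $C_t^2=3$ forces the base scheme to be $x+y+z$, and adjunction then gives $\OO_{C_t}(1)\cong\omega_{C_t}(-x-y-z)$. Your remark that reduced base points make every member smooth at $x,y,z$ is a correct addition. The paper's proof stops there and never argues nodality; in Proposition \ref{prop:pencild2} nodality of all members is simply assumed.

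Your nodality paragraph goes beyond the paper, and as written it does not work. You claim that $C_t$ is singular at $p\in B$ exactly when $p$ is a ramification point of $f_{|B}\colon B\to\PP^1$. Only one direction holds: if $C_t$ is singular at $p$, then $f_{|B}$ ramifies at $p$. Ramification of $f_{|B}$ only says that $df_p$ vanishes on $T_pB$, i.e. that $B$ is tangent to the fibre, not that $df_p=0$. You seem to be conflating $C_t$ with the full section $P_t\cdot X=B+C_t$, which is singular along $B\cap C_t$. For the pencil $|C_t|$ itself the base locus is only $\{x,y,z\}$, so Bertini already gives that the general member is smooth everywhere; there is nothing special about $B$.

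The real difficulty is that Bertini controls only the general member, while the statement concerns every member of a fixed pencil. Once $\Pi$ is fixed, there is no ``general line through $[\Lambda]$'' left to choose: the hyperplanes of $\Pi$ containing $\Lambda$ form a single, determined line in $\Pi^{\vee}$. The only freedom is the choice of $\Pi\supseteq\Lambda$, i.e. of a line in the $\PP^9$ of $11$-planes $P\supseteq\Lambda$. You would have to show the following: the locus of those $P$ whose residual curve $C_P$ (defined by $P\cdot\GG=B+C_P$) has a non-nodal singularity has codimension at least $2$ in this $\PP^9$. That requires a separate local analysis at the points of $B\cap C_P$. This dimension count is the missing ingredient, and neither your sketch nor the paper supplies it.

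Your appeal to Proposition \ref{prop:irr} does not transfer directly. That argument showed a net $\bigl|\OO_S(C)\bigr|$ contains only finitely many non-nodal curves, which a general line in the net then avoids. Here $\bigl|\OO_X(C_t)\bigr|$ is only a pencil, so there is no room to move it inside a larger linear system on the fixed surface $X$.
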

\begin{proof}
Since $\Lambda\subseteq P_t$, it follows that $x,y,z\in C_t$ for each $t\in \PP^1$. From the adjunction formula on $X$ we compute $B^2=2g(B)-2-B\cdot K_X=6-15=-9$, therefore $$C_t^2=(H-B)^2=H^2+B^2-2H\cdot B=3.$$ This shows that $\mbox{Bs}\bigl|\OO_X(H-B)\bigr|=\{x,y,z\}$. Furthermore, we have 
$$H\cdot C_t=H^2-\mbox{deg}(B)=42-15=27,$$ from which we compute by applying again adjunction $2g(C_t)-2=C_t^2+H\cdot C_t=30$, therefore $g(C_t)=16$. 
\end{proof}

We now take the canonical surface $X$ corresponding to a transverse triple $(B,F,V)$ and a smooth curve $C\in \bigl|\OO_X(H-B)\bigr|$. Note that $\OO_C(1)\cong \omega_C(-x-y-z)$. We let $\cQ$ be the universal rank $2$ quotient bundle on $\GG$ and set
\begin{equation}\label{eq:def_FC}
F_C:=\cQ_{|C}.
\end{equation} 
Then $\mbox{det}(F_C)\cong \omega_C(-x-y-z)$ and $h^0(C,F_C)=7$. Before investigating whether $F_C$ is stable, we introduce the following concept following \cite{AFRW}:

\vskip 4pt

\begin{definition}\label{def:resonance}
Let  $\bigl(\mathcal{E}, U\bigr)$ be a coherent system on a variety $X$ consisting of a vector bundle $\E$ and a subspace of sections  $U\subseteq H^0(X,\E)$. If 
$\mathrm{det}_U\colon \bigwedge^2 U\rightarrow H^0(X, \bigwedge ^2 \E)$ is the determinant map, then  the \emph{resonance} of $(\mathcal{E}, U)$ is defined as
$$\cR(\mathcal{E}, U):=\PP\bigl(\mbox{Ker}(\mathrm{det}_U)\bigr)\cap G(2, U),$$
the intersection being taken inside $\PP\bigl(\bigwedge^2 U)$. We write $\cR(\mathcal{E}):=\cR\bigl(\mathcal{E}, H^0(X, \mathcal{E})\bigr)$. 
\end{definition}

\begin{lemma}\label{lemma:resonance}
If $\cR(F_C)=\emptyset$, then $F_C$ is a stable vector bundle.
\end{lemma}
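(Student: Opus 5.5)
We argue by contraposition: assuming $F_C$ is not stable, we produce a decomposable tensor in the kernel of the determinant map, i.e. a point of $\cR(F_C)$. Here $F_C=\cQ_{|C}$ has rank $2$, degree $\mathrm{deg}\,\omega_C(-x-y-z)=27$ and $h^0(C,F_C)=7$. Since $F_C$ is a quotient of $V\otimes \OO_C$, it is globally generated. The restriction $V\rightarrow H^0(C,F_C)$ is injective: a section of $\cQ$ vanishing on $C$ vanishes on $X$ and hence on the span of $C$. So this restriction is an isomorphism.

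Suppose $F_C$ is not stable. Because the degree $27$ is odd, $F_C$ is then strictly unstable, and there is a quotient line bundle $F_C\twoheadrightarrow M$ with $\mathrm{deg}(M)\leq 13$. Let $N:=\mathrm{Ker}(F_C\rightarrow M)$, a line subbundle with $\mathrm{deg}(N)\geq 14$. The image $U_M\subseteq H^0(C,M)$ of $H^0(C,F_C)$ generates $M$, so $M$ is a globally generated line bundle of degree at most $13$ on $C$. Put $W:=\mathrm{Ker}\bigl(H^0(C,F_C)\rightarrow H^0(C,M)\bigr)=H^0(C,F_C)\cap H^0(C,N)$.

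The resonance point comes from $W$. If $s,t\in W$, then $s\wedge t$ is a section of $\bigwedge^2 N=0$ composed into $\bigwedge^2F_C$, so $\mathrm{det}(s\wedge t)=0$. Hence as soon as $\mathrm{dim}(W)\geq 2$, any two independent $s,t\in W$ give a decomposable element $s\wedge t\in \mathrm{Ker}(\mathrm{det})$, that is, a point of $\cR(F_C)$, contradicting the hypothesis. It therefore suffices to show $\mathrm{dim}(W)\geq 2$, i.e. $\mathrm{dim}(U_M)\leq 5$.

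This bound is the main obstacle. If $\mathrm{dim}(U_M)\geq 6$, then $M$ is a line bundle of degree $d\leq 13$ with $r:=h^0(C,M)-1\geq 5$. Brill--Noether theory for genus $16$ gives $\rho(16,5,13)=16-6\cdot 14<0$, which is far out of range, so we need to rule out this $M$ on $C$. The first approach is Clifford's theorem: $2r\leq d$ gives $r\leq 6$, with $r\leq 5$ unless special cases occur. Clifford alone still allows $r=5$ or $r=6$ with $d\leq 13$, so we must use the geometry of $C$ instead. I would use that $C$ lies on the canonical surface $X\subseteq \GG$ and is not hyperelliptic or trigonal; this is already used in Proposition \ref{prop:pencild2}, and we may choose $C$ general in the pencil. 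An alternative that avoids Brill--Noether input uses the pencil structure: via the exact sequence (\ref{eq:seq1}), a $g^5_{13}$ on $C$ would propagate to a $2$-dimensional family of such series on the curves $C_t$. Failing both, we fall back on the degree-$\leq 13$ Brill--Noether bound of Martens/Mumford type for non-trigonal curves, which gives $\mathrm{dim}\,W^r_d(C)\leq d-2r-2<0$ in this range. Once $\mathrm{dim}(U_M)\leq 5$ is established, $\mathrm{dim}(W)\geq 7-5=2$, and the contradiction above completes the argument.
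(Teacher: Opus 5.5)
Your reduction is correct and uses the hypothesis exactly as the paper does. A destabilizing line subbundle $N$ of degree $\ge 14$ with $h^0(C,N)\ge 2$ gives a decomposable tensor in $\mathrm{Ker}(\mathrm{det})$, so everything hinges on bounding $h^0(C,M)$ for the quotient. But the step you flag as the main obstacle is never proved, and the argument you settle on fails.

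The bound $\mathrm{dim}\, W^r_d(C)\le d-2r-2$ is not negative in the relevant range. It equals $1$ for $(d,r)=(13,5)$ and $0$ for $(12,5)$, so it excludes nothing. Worse, no statement depending only on $C$ being non-hyperelliptic and non-trigonal can give $h^0(C,M)\le 5$. A bielliptic curve of genus $16$ is neither, yet carries a $g^5_{12}$: pull back a degree $6$ line bundle from the elliptic curve. An intrinsic argument via Castelnuovo's bound would show that a $g^5_d$ with $d\le 13$ forces $C$ to be hyperelliptic or bielliptic. However, nothing in the lemma lets you exclude those cases. Your other two suggestions (Clifford, and propagation along the pencil) are not arguments.

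The missing idea is to use the determinant of $F_C$ rather than the geometry of $C$. Since $N\otimes M\cong \omega_C(-D)$ with $D=x+y+z$, you have $\omega_C\otimes M^{\vee}\cong N(D)$. Riemann--Roch then gives $h^0(C,M)=\mathrm{deg}(M)-15+h^0(C,N(D))\le \mathrm{deg}(M)-12+h^0(C,N)\le h^0(C,N)+1$. This uses $h^0(C,N(D))\le h^0(C,N)+3$ and $\mathrm{deg}(M)\le 13$. Combined with $7\le h^0(C,N)+h^0(C,M)$, it yields $h^0(C,N)\ge 3$, i.e. $\mathrm{dim}(W)\ge 3$.

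This is the paper's proof, phrased by contradiction. Resonance-freeness forces $h^0(C,A)\le 1$ for the destabilizing subbundle $A$. Hence the quotient $\omega_C\otimes A^{\vee}(-x-y-z)$ has at least $6$ sections, and Riemann--Roch gives $h^0(C,A(x+y+z))\ge 8$. This contradicts $h^0(C,A(x+y+z))\le h^0(C,A)+3\le 4$. The argument needs no hypothesis on $C$ at all.
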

\begin{proof}
Assume by contradiction $F_C$ is not stable and let $A$ be one of its maximal destabilizing line subbundles with $\mbox{deg}(A)\geq 14$. Since $\cR(F_C)=\emptyset$ it follows that $F_C$ has no subpencils, therefore $h^0(C,A)=1$ and one has an exact sequence
$$0\longrightarrow A\longrightarrow F_C\longrightarrow \omega_C\otimes A^{\vee}(-x-y-z)\longrightarrow 0.$$
Since $h^0(C,F_C)=7$, we have $h^0(C,\omega_C\otimes A^{\vee}(-x-y-z))\geq 6$, therefore by Riemann-Roch $h^0(C, A(x+y+z))\geq 8$, which contradicts the assumption $h^0(C,A)=1$. 
\end{proof}

\subsection{The geometry of the Hilbert schemes of curves in the Grassmannian.}

Recall that we defined via Definition \ref{def:d2} the subvariety $\cZ\subseteq \cU$ parametrizing points $$t=\bigl(C,E, x+y+z,\tau)$$ having a vector bundle $F=\mbox{Ker}\{E\twoheadrightarrow \tau\}$ with $\mbox{det}(F)\cong \omega_C(-x-y-z)$,  and satisfying $h^0(C,F)\geq 7$ and denoted by $\cZ_1\subseteq \cZ$ the subvariety characterized furthermore by the condition $\mbox{rk}(\mathrm{det}_F)\leq 11$. The geometric characterization of points $\cZ_1$ is transparent. Starting with $t\in \cZ$,  if we construct the surface $X$ via (\ref{eq:X}), the class 
$B\equiv H-C$ on $X$  has the attributes of a smooth curve of genus $4$, however in general $H^0(X,B)=0$. Requiring $B$ to be effective, leads precisely to the linkage of the genus $16$ curve $C$ to a genus $4$ curve $B$,  therefore to a point $t\in \cZ_1$.

\vskip 3pt

We denote by $\HH$ the Hilbert scheme of curves $B\hookrightarrow \GG\cong G(2,V^{\vee})$ of genus $4$ and degree $15$. Note that a triple $(B,F,V)$ like in Proposition \ref{eq:transv3} naturally leads to a point $[\phi_V\colon B \hookrightarrow \GG]\in \HH$, where $L=\phi_V^*(\OO_{\GG}(1))$.

\begin{proposition} The component of the Hilbert scheme $\HH$ corresponding to a transverse triple $(B,F,V)$  is smooth of dimension $84$.
\end{proposition}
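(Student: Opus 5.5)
Our plan is to use standard deformation theory of the Hilbert scheme of a smooth curve $B\subseteq \GG$. The Zariski tangent space at $[B]$ is $H^0(B,N_{B/\GG})$ and the obstructions lie in $H^1(B,N_{B/\GG})$. It therefore suffices to show $H^1(B,N_{B/\GG})=0$ for the embedding $\phi_V\colon B\hookrightarrow \GG$ coming from a transverse triple. Then $\HH$ is smooth at $[B]$ of dimension $\chi(B,N_{B/\GG})$. By Riemann--Roch, with $\mathrm{rk}(N_{B/\GG})=9$ and $\deg(N_{B/\GG})=111$ as computed in the proof of Proposition \ref{prop:excess}, this gives
$$\chi(B,N_{B/\GG})=111+9(1-4)=84.$$

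For the vanishing we use the tangent sequence $0\to T_B\to T_{\GG|B}\to N_{B/\GG}\to 0$. Since $H^2$ vanishes on a curve, $H^1(N_{B/\GG})$ is a quotient of $H^1(B,T_{\GG|B})$, so it suffices to prove $H^1(B,T_{\GG|B})=0$. We have $T_{\GG}\cong \mathcal{S}^{\vee}\otimes \cQ$, where $\mathcal{S}$ is the tautological rank $5$ subbundle. On $B$, $\cQ_{|B}=F$, and the tautological sequence restricts to
$$0\to \mathcal{S}_{|B}\to V\otimes \OO_B\to F\to 0.$$
Tensoring its dual with $F$ gives
$$0\to F^{\vee}\otimes F\to V^{\vee}\otimes F\to \mathcal{S}_{|B}^{\vee}\otimes F\to 0.$$
Taking cohomology, $H^1(B,V^{\vee}\otimes F)=V^{\vee}\otimes H^1(B,F)=0$, since $F$ is a general stable bundle of slope $15/2>2g(B)-2=6$. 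Consequently
$$H^1(B,T_{\GG|B})=\mathrm{Coker}\bigl(V^{\vee}\otimes H^0(B,F)\to H^0(B,F)^{\vee}\otimes\ldots\bigr),$$
or more precisely $H^1(T_{\GG|B})$ vanishes as soon as $H^2$ of the kernel vanishes, which is automatic on a curve. So $H^1(B,T_{\GG|B})=0$, and the desired vanishing follows.

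As a consistency check, we can also count parameters directly. Triples $(B,F,V)$ up to the choice of $V$ depend on:
\begin{itemize}
\item $\dim\cM_4=9$,
\item the moduli of stable rank $2$ bundles of degree $15$ on $B$, of dimension $4g-3=13$,
\item $\dim G\bigl(7,H^0(B,F)\bigr)=\dim G(7,9)=14$.
\end{itemize}
Adding $\dim PGL(7)=48$ gives $9+13+14+48=84$, in agreement. One should also check that the automorphisms are accounted for correctly: $F$ is stable, hence simple, and $\GG$ has automorphism group $PGL(V)$.

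The only subtle point is that the argument needs $\phi_V$ to be an embedding, so that $N_{B/\GG}$ is a vector bundle. This is part of Definition \ref{def:transv}. We also need $H^1(B,F)=0$ for the specific $F$ of the transverse triple, which holds because $F$ is chosen general, as in Proposition \ref{eq:transv3}. We therefore expect no serious obstacle; the main point is to quote correctly the identification $T_{\GG|B}\cong \mathcal{S}_{|B}^{\vee}\otimes F$ and the resulting surjection onto $H^1(N_{B/\GG})$.
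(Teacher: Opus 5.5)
Your proposal is correct and is essentially the paper's argument. Both deduce $H^1(B,N_{B/\GG})=0$ from $H^1(B,T_{\GG|B})=0$ and then read off $h^0(N_{B/\GG})=111-27=84$; the paper checks the vanishing via Serre duality, $H^1(M_V^{\vee}\otimes F)\cong H^0(M_V\otimes F^{\vee}\otimes\omega_B)^{\vee}=0$ with $M_V=\mathcal{S}_{|B}$, which is just the dual form of your surjection $V^{\vee}\otimes H^1(B,F)\twoheadrightarrow H^1(B,\mathcal{S}_{|B}^{\vee}\otimes F)$.

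Two small corrections. Drop the garbled ``Coker'' display, since your ``more precisely'' sentence is the actual argument. Also, $H^1(B,F)=0$ follows from stability alone because $\mu(F)=15/2>2g(B)-2$, so no generality of $F$ is needed.
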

\begin{proof} Having fixed $(B,F,V)$, we introduce the kernel bundle $M_V$  by the exact sequence 
$$0\longrightarrow M_V\longrightarrow V\otimes \OO_B\longrightarrow F\longrightarrow 0.$$ 
We have the identification  $T_{\GG|B}\cong M_V^{\vee}\otimes F$ of the restricted tangent bundle and write 
$$H^1\bigl(B, T_{\GG|B}\bigr)\cong H^1(B,M_V^{\vee}\otimes F)\cong H^0(B, M_V\otimes F^{\vee}\otimes \omega_B)^{\vee}=0,$$
where the last vanishing follows from the stability of $F$. From the usual exact sequence
$$0\longrightarrow T_B\longrightarrow T_{\GG|B}\longrightarrow N_{B/\GG}\longrightarrow 0,$$
it follows then by taking cohomology that $H^1(B, N_{B/\GG})=0$, therefore $\HH$ is smooth of dimension equal to $h^0(B, N_{B/\GG})=\mbox{deg}(N_{B/\GG})+\mbox{rk}(N_{B/\GG})\cdot (1-g(B))=111-27=84$, at the point $[B\hookrightarrow \GG]$.
\end{proof}

\vskip 3pt

Let us fix a point $[B\hookrightarrow \GG]\in \HH$ as above. We have the canonical identifications 
\begin{equation}\label{eq:ident6}
H^0\bigl(\GG, \I_{B/\GG}(1)\bigr)\cong \mathrm{Ker}(\mathrm{det}_V) \ \ \mbox{ and } \ H^1\bigl(\GG, \I_{B/\GG}(1)\bigr)\cong \mathrm{Coker}(\mathrm{det}_V).
\end{equation}

\begin{definition}\label{def:h1}
Let $\HH_1$ be the subvariety of $\HH$ defined by the condition 
$$H^1\bigl(\GG, \I_{B/\GG}(1)\bigr)\neq 0.$$
\end{definition}

Each component of $\HH_1$ has dimension at least $74=\mbox{dim}(\HH)-10$. For  a point $[B\hookrightarrow \GG]\in \HH_1$, we have $\mbox{dim}\langle B\rangle \leq 10$ and for a general point of $\HH_1$ we have equality. Next we introduce the parameter space which generically is a $\PP^9$-bundle over $\HH_1$ 
$$
\mathcal{P}:=\left\{\bigl([B\hookrightarrow \GG], P\bigr)\in \HH_1\times G\bigl(12,\bigwedge^2 V^{\vee}\bigr): P\in G\bigl(12,\bigwedge^2 V^{\vee}\bigr), \\  \langle B\rangle \subseteq P\subseteq \PP\bigl(\bigwedge^2 V^{\vee}\bigr)\right\}.
$$
Therefore $\mathcal{P}$ parametrizes $11$-dimensional subspaces $\PP^{11}\cong P\subseteq \PP\bigl(\bigwedge^2 V^{\vee}\bigr)$ containing the Pl\"ucker image of $B$. Each component of $\mathcal{P}$ has dimension at least $$9+\mbox{dim}(\HH_1)=83.$$ The automorphism group $\mbox{Aut}(\GG)\cong PGL(7)$ acts naturally on $\mathcal{P}$ and we have the following \emph{residuation map}
\begin{equation}\label{eq:residue}
\mathrm{res}\colon \mathcal{P}\dblq \mathrm{Aut}(\GG)\dashrightarrow \cZ_1, \ \ \bigl([B\hookrightarrow \GG], P\bigr)\mapsto [C, \cQ_{|C}],
\end{equation} 
where the curve $C$ of genus $16$ is defined by the equality $P\cdot \GG=B+C$. In a neighborhood of the point corresponding to a transverse triple $(B,F,V)$  this intersection is transverse and the map $\mathrm{res}$ is well-defined.  Note that the restriction $\cQ_{|C}$ of the tautological bundle is precisely the vector bundle $F_C$ on $C$ with $\mbox{det}(F_C)\cong \omega_C(-x-y-z)$ and $h^0(C,F_C)=7$.

\begin{proposition}\label{prop:linkage5}
The parameter space $\cZ$ has a generically smooth component of expected dimension $44$ provided one can exhibit a smooth component of dimension $83$ of $\HH_1$ corresponding to a transverse triple $(B,F,V)$.
\end{proposition}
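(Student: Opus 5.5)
Starting from a smooth $83$-dimensional component $\HH_1^{\circ}\subseteq \HH_1$ through the point $[B\hookrightarrow \GG]$ of a transverse triple $(B,F,V)$, the plan is to push it through the residuation map (\ref{eq:residue}) to a $35$-dimensional component of $\cZ_1$, and then use the codimension bound $\mathrm{dim}(\cZ_1)\geq \mathrm{dim}(\cZ)-9$ inside the corresponding component of $\cZ$.

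\emph{Dimension of the component of $\mathcal{P}$.} Over the open subset of $\HH_1^{\circ}$ where $\mathrm{rk}(\mathrm{det}_V)=11$, the span $\langle B\rangle$ is exactly a $\PP^{10}$. Hence $\mathcal{P}$ is a $\PP^9$-bundle there (choices of $P\cong \PP^{11}\supseteq \langle B\rangle$), giving a smooth component $\mathcal{P}^{\circ}$ of dimension $92$. Since transversality is open and $PGL(7)$ acts with finite stabilizers on embedded curves of this type, the quotient $\mathcal{P}^{\circ}\dblq \mathrm{Aut}(\GG)$ has dimension $92-48=44$. Note that Propositions \ref{prop:excess} and \ref{prop:residual16} make $\mathrm{res}$ well defined near the transverse point, with $C$ smooth of genus $16$ and $F_C=\cQ_{|C}$.

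\emph{Fibres of $\mathrm{res}$.} Fix $[C,F_C]$ in the image, together with the points $x,y,z$ and the data $\tau$ reconstructing $E$, as in Proposition \ref{prop:d2=tris}. Then $V=H^0(C,F_C)^{\vee}$, and the embedding $C\hookrightarrow \GG$ is determined up to $\mathrm{Aut}(\GG)$. The span $\langle C\rangle\cong \PP^{11}$ equals $P$, because $\mathrm{rk}(\mathrm{det}_{F_C})=11$. The curve $B$ is then recovered as the residual $P\cdot \GG - C$. So $\mathrm{res}$ is generically injective up to finite ambiguity, and the image has dimension $44$.

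\emph{Closing the argument.} This points to a tension: the image of $\mathrm{res}$ should be a subvariety of $\cZ_1$ of dimension $35$, not $44$. The discrepancy has to be absorbed by the fibre. I expect the parameter choice $P\supseteq\langle B\rangle$ to contribute a $\PP^9$ of choices yielding the same abstract curve $C$ only up to moving $C$ in its linear system $|\OO_X(H-B)|$ on varying surfaces $X$. The main step to verify is therefore the exact fibre dimension $9$ of $\mathrm{res}$. Equivalently, I need to show that each component of $\cZ_1$ has dimension at least $35$, and that this one has dimension exactly $35$, with the fibre of $\mathrm{res}$ computed via the identification (\ref{eq:ident6}) relating $\mathrm{Ker}(\mathrm{det}_V)$ to hyperplanes through $B$.

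Granting $\mathrm{dim}=35$ for the image component $\cZ_1^{\circ}$, let $\cZ'$ be a component of $\cZ$ containing it. Then $35=\mathrm{dim}(\cZ_1^{\circ})\geq \mathrm{dim}(\cZ')-9$, so $\mathrm{dim}(\cZ')\leq 44$. Combined with the lower bound $44$ coming from the rank-$2$ degeneracy locus structure of $\mathfrak{ev}$, this gives $\mathrm{dim}(\cZ')=44$.

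Generic smoothness follows from the tangent space computation of Proposition \ref{prop:twisted_bn}. The smoothness of $\HH_1^{\circ}$, via $H^1(B,N_{B/\GG})=0$, yields the injectivity of $\mu_{F_C,D}$ at the residual point. This follows from the linkage exact sequences relating $N_{C/X}$ and $N_{B/X}$ on the smooth surface $X$. I expect this fibre-dimension and tangent-space comparison through linkage to be the main obstacle.
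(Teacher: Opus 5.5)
There is a genuine gap, and it begins with the dimension count. The $83$ in the hypothesis cannot be $\dim\HH_1$. Indeed, $\HH_1$ is the corank-one locus of $\mathrm{det}_V\colon\bigwedge^2V\to H^0(B,L)$, which has expected codimension $10$ in the $84$-dimensional $\HH$. Propositions~\ref{prop:transv5} and \ref{prop:transv8} produce a smooth component of dimension $74$, and it is the $\PP^9$-bundle $\mathcal{P}^{\circ}$ over it that has dimension $74+9=83$. The paper's proof works with a component of $\HH_1$ of the expected dimension $74$.

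With this reading, $\mathcal{P}^{\circ}\dblq\mathrm{Aut}(\GG)$ has dimension $83-48=35$. Your own fibre argument (recover $P=\langle C\rangle$, then $B$ as the residual of $C$ in $P\cdot\GG$) is exactly the statement that $\mathrm{res}$ in (\ref{eq:residue}) is a local isomorphism. That gives a $35$-dimensional component of $\cZ_1$ directly, and then $44\le\dim_t\cZ\le\dim_t\cZ_1+9=44$, which is the paper's argument.

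By taking $\dim\HH_1^{\circ}=83$ you instead reach a $44$-dimensional image, and you try to repair the count with $9$-dimensional fibres of $\mathrm{res}$. This contradicts the injectivity you established one paragraph earlier, and it is false. Distinct $P\supseteq\Lambda$ give residual curves $C_P$ with $\langle C_P\rangle=P$, hence generically distinct points $[C_P,F_{C_P}]$ of $\cZ_1$. So the step you single out as the main one to verify cannot be proved.

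Your route to generic smoothness also fails. The twisted Petri map $\mu_{F_C,D}$ goes from a space of dimension $7\cdot 10-21=49$ to one of dimension $61-13=48$, so it is never injective; the meaningful condition is maximal rank, i.e.\ surjectivity. Moreover, at a linkage point, where $\dim\langle C\rangle=11$, Proposition~\ref{prop:transv9} shows its rank is only $42$. This is why the paper later needs a further deformation of $C$ out of $\cZ_1$. Since the conclusion is false at this point, no argument from $H^1(B,N_{B/\GG})=0$ and linkage sequences can produce it.

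Smoothness is instead transported along $\mathrm{res}$. By hypothesis $\HH_1^{\circ}$ is smooth, hence so are $\mathcal{P}^{\circ}$ and its quotient by $\mathrm{Aut}(\GG)$, so $\cZ_1$ is smooth of dimension $35$ at $t$. Near $t$, the locus $\cZ_1$ is cut out of $\cZ$ by the $9$ equations expressing that the $13\times 21$ matrix of $\mathrm{det}_F$ has rank at most $12$. Hence $\dim T_t\cZ\le\dim T_t\cZ_1+9=44\le\dim_t\cZ$, so $\cZ$ is smooth of dimension $44$ at $t$.
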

\begin{proof}
We observe that the map $\mathrm{res}$ given by (\ref{eq:residue}) is locally an isomorphism. Therefore the existence of a component of $\HH_1$ of the expected dimension $74$ containing a point $[B\hookrightarrow \GG]$ corresponding of a transverse triple $(B,F,V)$ implies the existence of a generically smooth component of dimension $35=\mbox{dim}(\HH_1)+9-\mathrm{dim}(\mathrm{Aut}\bigl(\GG)\bigr)$ of $\cZ_1$. Around such a point $t\in \cZ_1$, we have the following estimate
$44\leq \mbox{dim}_t(\cZ)\leq \mbox{dim}_t(\cZ_1)+9=44$, thus finishing the proof.
\end{proof}

We now make the assumption in Proposition \ref{prop:linkage5} explicit. This will then be used in Section \ref{sect:transv}, where such a component of $\HH_1$ is explicitly constructed.

\begin{proposition}\label{prop:transv5}
A point $[B\hookrightarrow \GG]\in \HH_1$ for which $\mathrm{rk}(\mathrm{det}_V)=11$ corresponds to a smooth point of the expected dimension $74$ of $\HH_1$ if and only if the natural map
\begin{equation}\label{eq:nuB}
\nu_B\colon H^0(B, N_{B/\GG})\longrightarrow \mathrm{Hom}\bigl(\mathrm{Ker}(\mathrm{det}_V), \mathrm{Coker}(\mathrm{det}_V)\bigr)
\end{equation}
is surjective.
\end{proposition}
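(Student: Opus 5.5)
We will realize $\HH_1$ locally near $[B\hookrightarrow \GG]$ as a degeneracy locus and compute its Zariski tangent space. Over the smooth $84$-dimensional component of $\HH$ we consider the universal curve $\mathcal{B}\subseteq \HH\times \GG$ with its ideal sheaf, and the restriction map of vector bundles
$$\rho\colon H^0\bigl(\GG,\OO_{\GG}(1)\bigr)\otimes \OO_{\HH}\cong \bigwedge^2 V\otimes \OO_{\HH}\longrightarrow \mathcal{L},$$
where $\mathcal{L}$ is the rank $12$ bundle with fibre $H^0(B,L)$ at $[B]$. It is a bundle because $h^1(B,L)=0$ for $\deg L=15>2g(B)-2$. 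Its fibre at $[B]$ is $\mathrm{det}_V$. By (\ref{eq:ident6}), $\HH_1$ is the locus where $\rho$ has rank at most $11$. This is a determinantal locus of expected codimension $(21-11)(12-11)=10$, consistent with the bound $\dim \HH_1\geq 74$.

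The key step is the standard tangent space description of a determinantal locus at a point where the rank is exactly $11$. Locally, $\HH_1$ is the zero locus of the induced section of $\mathrm{Hom}(\mathrm{Ker}\,\rho,\mathrm{Coker}\,\rho)$, a rank $10$ bundle. Hence the Zariski tangent space of $\HH_1$ at $[B]$ is the kernel of the differential
$$T_{[B]}\HH\longrightarrow \mathrm{Hom}\bigl(\mathrm{Ker}(\mathrm{det}_V),\mathrm{Coker}(\mathrm{det}_V)\bigr),\qquad v\mapsto \bigl(s\mapsto \overline{\dot\rho_v(s)}\bigr).$$
Here $\dot\rho_v$ is the derivative of $\rho$ in direction $v$: lift $s$ to a first-order deformation of sections and project the resulting variation to the cokernel. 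The point $[B]$ is a smooth point of $\HH_1$ of dimension $74$ if and only if this differential has rank $10$, that is, if and only if it is surjective. Since $\HH$ is smooth of dimension $84$, this equivalence is the usual Jacobian criterion, and it gives both directions.

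It remains to identify this differential with $\nu_B$ under $T_{[B]}\HH=H^0(B,N_{B/\GG})$. We would do this by an explicit first-order computation. Take a first-order deformation $B_\epsilon\subseteq \GG\times \mathrm{Spec}\,\mathbb C[\epsilon]$ given by $v\in H^0(N_{B/\GG})$, and a linear form $s\in H^0(\GG,\I_{B/\GG}(1))$. Its restriction to $B_\epsilon$ is $\epsilon\cdot \bar s(v)$. Here $\bar s(v)\in H^0(B,L)$ is obtained by pairing $v$ with $ds$, the image of $s$ in $H^0(N^\vee_{B/\GG}(1))$; this uses the conormal morphism $\epsilon$ of Definition \ref{def:transv}. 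Projecting $\bar s(v)$ to $\mathrm{Coker}(\mathrm{det}_V)\cong H^1(\GG,\I_{B/\GG}(1))$ gives the differential. Equivalently, the map is the connecting homomorphism induced by $v$ in the sequence $0\to \I_{B/\GG}(1)\to \OO_{\GG}(1)\to L\to 0$, and this is the natural map $\nu_B$ of (\ref{eq:nuB}).

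The main obstacle is checking carefully that the abstract derivative of $\rho$ really agrees with this pairing, including independence of choices modulo the image of $\mathrm{det}_V$. We would handle it by working with Čech cocycles on $B$, or more simply with the deformation-theoretic description of the cup product $H^0(N_{B/\GG})\otimes H^0(\I_B(1))\to H^0(\OO_B(1))$.
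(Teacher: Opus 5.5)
Your proposal is correct and follows the same route as the paper. Both realize $\HH_1$ near $[B\hookrightarrow\GG]$ as the rank-$\le 11$ degeneracy locus of the restriction map $\bigwedge^2 V\otimes\OO_{\HH}\to\mathcal{L}$ over the smooth $84$-dimensional $\HH$, and both identify the differential of the induced section of $\mathrm{Hom}(\mathrm{Ker},\mathrm{Coker})$ with $\nu_B$, which the paper defines via the pairing $\theta_\xi(1)(\bar s)$ before invoking ``the general theory of degeneracy loci''. Your first-order computation supplies the details the paper leaves implicit; note only that the ``only if'' direction requires reading smoothness of $\HH_1$ via its determinantal scheme structure (Zariski tangent space), as you implicitly do.
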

\begin{proof}
We recall the identifications (\ref{eq:ident6}). An element $\xi\in H^0(B, N_{B/\GG})$ gives rise to a morhphism $\theta_{\xi}\colon \I_{B/\GG}/{\I_{B/\GG}^2}\rightarrow \OO_B$. Let $\theta_{\xi}(1)\colon \I_{B/\GG}/{\I_{B/\GG}^2}(1)\rightarrow L$ be the morphism induced by $\theta_{\xi}$ after twisting by $L$.  A section $s\in \mbox{Ker}(\mathrm{det}_V)\cong H^0\bigl(\GG, \I_{B/\GG}(1)\bigr)$, induces a section $\overline{s}\in H^0\bigl(B, \I_{B/\GG}/\I^2_{B/\GG}(1)\bigr)$. The morphism $\nu_B$ is then obtained by  setting 
$$\nu_B(\xi)(s):=\theta_{\xi}(1)(\overline{s}) \ \mathrm{mod}\ \bigwedge^2 V\in H^0(B,L)/\bigwedge^2 V= \mathrm{Coker}(\mathrm{det}_V).$$ That the maximal rank of $\nu_B$ guarantees the smoothness and the expected dimension of $\HH_1$ at the point $[B\hookrightarrow \GG]$, follows from the general theory of degeneracy loci.
\end{proof}

In Section \ref{sect:transv} we shall construct a transverse triple $(B,F,V)$ for which the corresponding map $\nu_B$ defined by (\ref{eq:nuB}) is surjective. Assuming this for the time being, we proceed with the following conclusion:

\begin{theorem}\label{thm:44}
The variety $\cZ$ admits a generically smooth component of dimension $44$.
\end{theorem}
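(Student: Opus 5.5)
The plan is to deduce Theorem \ref{thm:44} from Proposition \ref{prop:linkage5}, using Propositions \ref{eq:transv3} and \ref{prop:transv5} as input. Concretely, I would exhibit a point $[B\hookrightarrow \GG]\in \HH_1$ coming from a transverse triple $(B,F,V)$ in the sense of Definition \ref{def:transv}, and check that at this point $\HH_1$ is smooth of the expected dimension $74$. The component of $\mathcal{P}$ through the corresponding pair $([B\hookrightarrow\GG],P)$ is then smooth of dimension $74+9=83$; this is the hypothesis of Proposition \ref{prop:linkage5} (the ``$83$'' there refers to $\mathcal{P}$, the $\PP^9$-bundle over $\HH_1$). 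The steps, in order, are as follows.

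\emph{Step 1.} Take the transverse triple provided by Proposition \ref{eq:transv3}, so $\mathrm{rk}(\mathrm{det}_V)=11$ and $\phi_V$ is an embedding. By the preceding Proposition, $\HH$ is smooth of dimension $84$ at $[B\hookrightarrow\GG]$. Since $h^1(\GG,\I_{B/\GG}(1))=1$ by (\ref{eq:ident6}), $\HH_1$ is locally the rank degeneracy locus where the $\mathrm{Coker}(\mathrm{det}_V)$-valued map drops rank. Its expected codimension is $\dim\mathrm{Ker}(\mathrm{det}_V)\cdot\dim\mathrm{Coker}(\mathrm{det}_V)=10\cdot 1=10$, giving expected dimension $74$.

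\emph{Step 2.} Invoke Proposition \ref{prop:transv5}: smoothness of $\HH_1$ of dimension $74$ at this point is equivalent to surjectivity of $\nu_B\colon H^0(B,N_{B/\GG})\to \mathrm{Hom}(\mathrm{Ker}(\mathrm{det}_V),\mathrm{Coker}(\mathrm{det}_V))\cong\mathbb C^{10}$. I would verify this surjectivity on the explicit triple constructed in Section \ref{sect:transv}, using Propositions \ref{prop:transv3} and \ref{prop:transv4}.

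\emph{Step 3.} Near this point the fibres of $\mathcal{P}\to\HH_1$ are the $\PP^9$'s of $11$-planes containing the $10$-plane $\Lambda=\langle B\rangle$, so $\mathcal P$ is smooth of dimension $83$ there. Transversality (Proposition \ref{prop:excess}) then makes the residuation map $\mathrm{res}$ of (\ref{eq:residue}) well defined and a local isomorphism modulo $\mathrm{Aut}(\GG)$. It remains to make sure $\mathrm{Aut}(\GG)$ acts with finite stabilizers, which holds because $B$ spans a $\PP^{10}$ and the general $C$ has finite automorphisms. This gives a generically smooth component of $\cZ_1$ of dimension $83-48=35$. Finally, the estimate $44\le\dim_t\cZ\le\dim_t\cZ_1+9=44$ from Proposition \ref{prop:linkage5} yields the required $44$-dimensional component of $\cZ$. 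Generic smoothness of $\cZ$ there follows because the lower bound $44$ comes from the determinantal description of $\mathfrak{ev}$ and is attained.

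The main obstacle is Step 2, the surjectivity of $\nu_B$. It is an explicit but delicate computation on a specific curve. I would first try a degeneration to the split situation of Lemma \ref{lemma2+5}, where $F=A\oplus M$ and $V=H^0(A)\oplus W$. There, normal directions coming from deforming the extension class in $\mathrm{Ext}^1(M,A)$ and from moving $W$ inside $H^0(B,M)$ should produce, via $\bigwedge^2 W\to H^0(A\otimes M)/(H^0(A)\otimes W)$ and the cross terms $H^0(A)\otimes W$, ten independent functionals on $\mathrm{Ker}(\mathrm{det}_V)$. Surjectivity is an open condition, so it then extends to the nearby stable transverse triple.
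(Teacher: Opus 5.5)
Your reduction is the paper's own: Theorem~\ref{thm:44} is read off from Proposition~\ref{prop:linkage5}, whose hypothesis is turned by Proposition~\ref{prop:transv5} into the surjectivity of $\nu_B$. You are right that the ``$83$'' there refers to $\mathcal{P}$. Your Steps~1 and~3 reproduce the paper's dimension counts.

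The gap is that this surjectivity, the only non-formal input, is never established. Propositions~\ref{prop:transv3} and~\ref{prop:transv4} give $\mathrm{rk}(\mathrm{det}_V)=11$, the embedding $\phi_V$, and the surjectivity of the conormal map $\epsilon$. They say nothing about $\nu_B$. In the paper this is the separate Proposition~\ref{prop:transv8}. On the explicit $\mathbb F_{17}$-triple one picks ten specific normal directions: seven from deforming the extension (\ref{eq:extF}) together with lifts (the space $\mathcal{Y}$), one moving $W$, a Schiffer variation of $[B]\in\cM_4$, and $\tau_{10}$, which moves $h_0$ out of the subpencil $A$. One then checks that the $10\times 10$ matrix of their images under $\nu_B$ is nonsingular.

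Your fallback via the split triple of Lemma~\ref{lemma2+5} does not work as stated.
\begin{itemize}
\item There $\mathrm{rk}(\mathrm{det}_{V_{\mathrm{spl}}})=10$, so $\mathrm{Ker}=\bigwedge^2H^0(A)\oplus\bigwedge^2W$ is $11$-dimensional and $\mathrm{Coker}$ is $2$-dimensional. The target of $\nu_B$ is therefore $22$-dimensional.
\item The split point lies in the deeper stratum $h^1(\I_{B/\GG}(1))\geq 2$, outside the range of Proposition~\ref{prop:transv5}. There is no single open condition to transport to nearby rank-$11$ points.
\item Moreover, $\nu_B$ cannot be surjective there. For general $A,M$ one has $H^1(B,M\otimes A^{\vee})=0$, so every first-order deformation is a split one plus a class in $\mathrm{Ext}^1(M,A)$ plus a motion of $V$. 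Deformations keeping $F$ split with $V=H^0(A)\oplus W$ (moving $B$, $A$, $M$ or $W$) keep $\bigwedge^2H^0(A)\oplus\bigwedge^2W$ in the kernel and contribute nothing.
\item Classes in $\mathrm{Ext}^1(M,A)$ keep $A\subseteq F$, so they kill $u_0\wedge u_1$ and contribute at most $8$ dimensions. Motions of $H^0(A)$ inside $H^0(F)$ only see $u_0\wedge u_1$ and contribute at most $2$.
\end{itemize}
Thus $\mathrm{rk}(\nu_B)\leq 10<22$ at the split point. The ``ten independent functionals'' you describe live on the wrong space, and no openness argument from that point yields surjectivity nearby. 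What is needed is a direct verification at a genuine rank-$11$ transverse triple, which is exactly what Proposition~\ref{prop:transv8} supplies.

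A minor point: attaining the expected dimension $44$ does not by itself make $\cZ$ generically reduced. Generic smoothness needs a tangent-space computation, for instance via the intrinsic derivative of $\mathfrak{ev}$ or the twisted Petri map of Proposition~\ref{prop:twisted_bn}.
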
  
\begin{proof} Follows directly from Proposition \ref{prop:linkage5}.
\end{proof}

\section{Transversal triples}\label{sect:transv}

We now explain the proof of Proposition \ref{eq:transv3}, that is, we construct a \emph{transverse} triple $(B,F,V)$ consisting of a smooth curve $B$ of genus $4$, a  vector bundle $F$ of rank $2$ on $B$ with $\mbox{det}(F)\cong L \cong A\otimes M$, where $A\in \mbox{Pic}^5(B)$ and $M\in \mbox{Pic}^{10}(B)$ and a $7$-dimensional space of sections $V\subseteq H^0(B,F)$ such that $\mathrm{rk}(\mathrm{det}_V)=11$.

\vskip 4pt

It will be enough to construct this example over a finite field, then lift it to characteristic zero. We work over the field $\kk=\mathbb F_{17}$. The curve $B$ of genus $4$ is given by the following equation of bidegree $(5,2)$ in $\PP^1\times \PP^1$:

$$
B=\Bigl\{\bigl([x_0,x_1], [y_0,y_1]\bigr) \in \PP^1\times \PP^1: a_0(x_0,x_1) y_0^2+a_1(x_0,x_1) y_0y_1+a_2(x_0,x_1) y_1^2=0\Bigr\},
$$
$$\mbox{where } \ a_0(x_0,x_1)=x_0^5+12x_0^4x_1+5x_0^3x_1^2+3x_0^2x_1^3+11x_0x_1^4+16x_1^5,$$
$$a_1(x_0,x_1)=7x_0^5+14x_0^4x_1+16x_0^3x_1^2+7x_0^2x_1^3+7x_0x_1^4+5x_1^5,$$
$$a_2(x_0,x_1)=13x_0^5+8x_0^4x_1+14x_0^3x_1^2+13x_0^2x_1^3+13x_0x_1^4+16x_1^5.$$

\vskip 4pt

One checks directly that $B\subseteq \PP^1\times \PP^1$ is a smooth curve of genus $4$. Denoting by $\pi_1, \pi_2\colon B\rightarrow \PP^1$ the two projections, note that $\mbox{deg}(\pi_1)=2$, therefore $B$ is hyperelliptic. We then set
$$A:=\pi_2^*\OO_{\PP^1}(1)\in \mbox{Pic}^5(B) \ \mbox{ and } \ \ M:=\pi_1^*\OO_{\PP^1}(5)\in \mbox{Pic}^{10}(B),$$ 

\noindent then choose the  bases $\bigl(u_0:=y_0, u_1:=y_1\bigr)$ of $H^0(B,A)$, respectively of $H^0(B,M)$ 
$$\bigl(m_0:=x_0^5, \ m_1:=x_0^4x_1, \ m_2:=x_0^3x_1^2, \ m_3:=x_0^2x_1^3, \ m_4:=x_0x_1^4, \ m_5:=x_1^5, \ m_6\bigr),$$
where $m_6$ does not lie in the image of the map $\pi_1^*\colon H^0(\PP^1, \OO_{\PP^1}(5))\rightarrow H^0(B,M)$ and is chosen in such a way that it satisfies both relations $y_0m_6=-a_1(x_0,x_1)y_0-2a_2(x_0,x_1)y_1$ and $y_1m_6=2a_0(x_0,x_1)y_0+a_1(x_0,x_1)y_1$ in the space $H^0(B,L)$.

\vskip 4pt

Via the Base Point Free Pencil Trick it follows that the multiplication map 
$$H^0(B,A)\otimes H^0(B,M)\longrightarrow H^0(B,L)$$
is surjective. We introduce the following basis of $H^0(B,L)$:
$$\bigl(\ell_i:=x_0^{5-i}x_1^iy_0, \ \ \ell_{6+i}:=x_0^{5-i}x_1^iy_1\bigr)_{i=0,\ldots,5}.$$
The choice of these three bases is retained throughout this section.

\subsection{The construction of the vector bundle $F$.}

We construct the rank $2$ vector bundle $F$ on $B$ sitting in an extension 
\begin{equation}\label{eq:extF}
0\longrightarrow A\longrightarrow F\stackrel{\pi_M}\longrightarrow M\longrightarrow 0.
\end{equation}
We introduce the $\kk$-vector space $H\cong \kk^9$ and denote by $(h_0, \ldots, h_8)$ its canonical basis. In order to define $F$, we first construct a map $\varphi\colon B \rightarrow G(2, H^{\vee})\subseteq \PP\bigl(\bigwedge^2 H^{\vee}\bigr)$ and then $F$ will be induced by $\varphi$ via the evaluation morphism. We introduce the following map
\begin{equation}\label{eq:alternating}
d\colon \bigwedge^2 H \longrightarrow H^0(B,L), 
\end{equation} given by 
$d\bigl(h_0\wedge h_1\bigr)=0$, $d\bigl(h_0\wedge h_{i+2}\bigr)=u_0\cdot m_i$ and $d\bigl(h_1\wedge h_{i+2}\bigr)=u_1\cdot m_i$, for $1\leq i\leq 6$,
whereas for $i<j$ with $i, j=0, \ldots, 6$, we write 
$$d\bigl(h_{i+2}\wedge h_{j+2}\bigr)=\sum_{a=0}^{11} c_{ij,a}\cdot \ell_a\in H^0(B,L),$$ 
where the $21\times 12$ matrix of coefficients $c_{ij,a}$ has the following form:

$$\bigl(c_{ij,a}\bigr)_{i<j, a=0, \ldots,11}=
\left(
\begin{array}{rrrrrrrrrrrr}
11&10&0&0&0&0&7&7&0&0&0&0\\
15&14&10&0&0&0&1&11&7&0&0&0\\
8&0&14&10&0&0&0&6&11&7&0&0\\
15&14&0&14&10&0&5&10&6&11&7&0\\
2&16&14&0&14&10&16&5&10&6&11&7\\
0&16&5&2&9&4&2&3&5&8&8&4\\
0&15&3&0&0&0&0&1&4&0&0&0\\
0&8&0&3&0&0&0&0&6&4&0&0\\
0&15&14&0&3&0&0&5&10&6&4&0\\
0&2&16&14&0&3&0&16&5&10&6&4\\
12&8&9&11&4&11&16&2&6&2&14&9\\
0&0&8&2&0&0&0&0&0&5&0&0\\
0&0&15&14&2&0&0&0&5&10&5&0\\
0&0&2&16&14&2&0&0&16&5&10&5\\
1&7&14&14&6&14&9&3&7&1&10&12\\
0&0&0&15&6&0&0&0&0&5&10&0\\
0&0&0&2&16&6&0&0&0&16&5&10\\
5&15&9&1&3&1&4&1&2&4&5&11\\
0&0&0&0&2&1&0&0&0&0&16&0\\
10&15&15&12&12&12&15&5&8&8&3&2\\
16&1&3&0&4&10&8&2&8&10&8&13
\end{array}
\right).
$$

The ordering of the $21$ rows is lexicographic. We have an induced morphism $\varphi\colon B\rightarrow \PP\bigl(\bigwedge^2 H^{\vee}\bigr)$ whose coordinates are the sections $d_{ij}:=d\bigl(h_i\wedge h_i\bigr)\in H^0(B,L)$, for $0\leq i<j\leq 8$. One checks directly that the Pl\"ucker identities 
$$d_{ij}\cdot d_{k\ell}-d_{ik}\cdot d_{j\ell}+d_{i\ell}\cdot d_{jk}=0\in H^0(B,L^2),$$
hold for $0\leq i<j<k<\ell\leq 8$ and that moreover the sections $d_{ij}$ have no common zero. It follows that $\varphi(B)\subseteq G(2, H^{\vee})$. Denoting by $\cQ_{H}$ the tautological rank $2$ bundle on $G(2, H^{\vee})$, we set $F:=\phi^*\cQ_{H}$. Then $F$ is a globally generated rank $2$ vector bundle with $\mbox{det}(F)\cong L$ and $H^0(B,F)\cong H$. Via this identification, the map $\phi_F\colon B\rightarrow G(2,H^0(B,F)^{\vee})$ induced by evaluation of global sections is precisely the map $\varphi$ we have just constructed. Similarly, the map $d$ described by (\ref{eq:alternating}) becomes the determinant map at the level of global sections of the vector bundle $F$, that is, 
$$\mathrm{det}_F\colon \bigwedge^2 H^0(B,F)\longrightarrow H^0(B,L).$$ Since $d\bigl(h_0\wedge h_1\bigr)=0$, the sections $h_0$ and $h_1$ generate the subpencil $A\hookrightarrow F$ and they get identified with the sections $u_0, u_1\in H^0(B,A)$. This also shows that the vector bundle $F$ sits in the extension (\ref{eq:extF}). Under the projection map $\pi_M\colon H^0(B,F)\twoheadrightarrow H^0(B,M)$, the section $h_{i+2}$ is mapped to $m_i$, for $i=0, \ldots, 6$.

\vskip 4pt

We then construct the $7$-dimensional space $V\subseteq H^0(B,F)\cong H$, as a lift under the map $\pi_M$ of the  $5$-dimensional subspace $W\subseteq H^0(B,M)$ generated by the following linearly independent sections:
$$
w_0=10m_0+14m_1+m_2+4m_3+5m_4+12m_5+3m_6,$$ 
$$w_1=11m_0+15m_1+4m_2+9m_3+4m_4+16m_5+13m_6,$$
$$w_2=16m_0+12m_1+5m_2+5m_3+2m_5+9m_6,$$
$$w_3=4m_0+11m_1+2m_2+2m_3+12m_4+12m_5+10m_6,$$
$$w_4=m_0+16m_1+9m_2+6m_4+7m_5+6m_6.$$

\vskip 4pt

\noindent Choosing lifts $\widetilde{w}_i\in H^0(C,F)$ of $w_i$, for $i=0, \ldots, 4$,  we set 
$$V:=\bigl\langle v_0:=h_0, v_1:=h_1, v_2:=\widetilde{w}_0, v_3:=\widetilde{w}_1, v_4:=\widetilde{w}_2, v_5:=\widetilde{w}_3, v_6:=\widetilde{w}_4\bigr\rangle \subseteq H^0(B,F).$$
\noindent Note that $V$ only depends on $W$ and not on the choice of the lifts $\widetilde{w}_i$. Recalling that the determinant map $\mbox{det}_V\colon \bigwedge^2 V\rightarrow H^0(B,L)$ has been defined via (\ref{eq:detV}), we have:

\begin{proposition}\label{prop:transv3} The following statements hold:
\begin{enumerate}
\item We have that $\mathrm{rk}(\mathrm{det}_V)=11$, therefore $\Lambda:=\PP\bigl(\mathrm{Im}(\mathrm{det}_V^{\vee})\bigr)$ is $10$-dimensional.
\item The map $\phi_V\colon B \rightarrow G(2,V^{\vee})$ is an embedding.
\end{enumerate}
\end{proposition}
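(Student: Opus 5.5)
The proof is a finite computation over $\kk=\mathbb F_{17}$ with the explicit data above, organized so that each step reduces to linear algebra or a Gr\"obner-basis check in a computer algebra system such as Macaulay2.

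For (1), we first write the matrix of $\mathrm{det}_V\colon \bigwedge^2 V\rightarrow H^0(B,L)$ in the basis $(\ell_0,\ldots,\ell_{11})$. Each $v_{i+2}=\widetilde{w}_i$ is a combination $\sum_k w_{ik}h_{k+2}$ plus a multiple of $h_0,h_1$, and $V$ does not depend on that multiple. We therefore take the lifts to be exactly $\sum_k w_{ik}h_{k+2}$, reading the coefficients $w_{ik}$ off the displayed formulas for $w_0,\ldots,w_4$. Then $d(v_0\wedge v_1)=0$. Next, $d(v_0\wedge v_{i+2})=u_0\cdot w_i$ and $d(v_1\wedge v_{i+2})=u_1\cdot w_i$; to expand these in the $\ell_a$ we use the relations $y_0m_6=-a_1y_0-2a_2y_1$ and $y_1m_6=2a_0y_0+a_1y_1$ to rewrite the $m_6$ terms. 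Finally,
$$d(v_{i+2}\wedge v_{j+2})=\sum_{k<l}(w_{ik}w_{jl}-w_{il}w_{jk})\,c_{kl,\cdot},$$
computed from the $21\times 12$ table. This gives a $21\times 12$ matrix over $\mathbb F_{17}$, and we row-reduce it to check that its rank is $11$. A useful sanity check comes from the discussion after Lemma \ref{lemma2+5}. The ten vectors $u_\alpha w_i$ are independent by the base point free pencil trick, since $h^0(M-A)=0$ in the relevant sense, or directly by inspection, so they span a $10$-dimensional subspace. The rank is then $11$ exactly when the ten vectors $d(v_{i+2}\wedge v_{j+2})$ span a single extra dimension modulo $H^0(A)\otimes W$. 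This means the $10\times 2$ matrix of their images in the $2$-dimensional quotient $H^0(L)/(H^0(A)\otimes W)$ has rank exactly one, which is a small check.

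For (2), we must show that $\phi_V$ is injective and unramified, so that it is a closed embedding. We first check that $V\otimes\OO_B\rightarrow F$ is surjective, that is, that $\phi_V$ is a morphism. The sections $h_0,h_1$ generate $A$, which is base point free, and $W\subseteq H^0(M)$ has no base points. Both facts can be checked by verifying that the ideal generated by the $w_i$, together with the equation of $B$, is irrelevant in the bihomogeneous coordinate ring of $\PP^1\times\PP^1$. Next, since the Pl\"ucker composition of $\phi_V$ is given by the $11$ independent sections spanning $\mathrm{Im}(\mathrm{det}_V)\subseteq H^0(B,L)$, it suffices to prove that this $11$-dimensional linear system separates points and tangents on $B$. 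Computationally, we form the graph ideal of the map $B\rightarrow \PP^{10}$ given by the $11$ sections (in Macaulay2 over $\mathbb F_{17}$, using the bihomogeneous ring of $\PP^1\times\PP^1$). We then compute the image ideal and check two things: that the image curve has degree $15$ and arithmetic genus $4$, via its Hilbert polynomial $15t-3$, and that its singular locus, given by the Jacobian ideal, is empty. A degree-$15$ image together with birationality and smoothness forces an isomorphism onto the image.

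There is a cleaner alternative. For $p,q\in B$, consider the kernel of $\mathrm{Im}(\mathrm{det}_V)\rightarrow L|_{p+q}$, and check that $h^0(L(-p-q))$ restricted to this hyperplane drops by $2$. The complete system $|L|$ is very ample since $\deg L=15\geq 2g+1$, and the sections we lose form one hyperplane $\mathrm{Ker}(\eta)$. So it suffices to show that no divisor $p+q$ with $p,q\in B$ (allowing $p=q$) has the form "$\eta$ supported on $p+q$". Concretely, $\eta\in H^0(L)^\vee\cong H^1(\omega_B\otimes L^{\vee})$ must not lie in the image of $H^0(L|_{p+q})^\vee$, which means that $\eta$ does not lie on the secant variety of $B\subseteq \PP H^0(L)^\vee=\PP^{11}$. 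This secant condition is the main obstacle. If the direct Jacobian computation is too heavy, we will verify instead that the point $[\eta]$ does not lie on the $3$-dimensional secant variety of the $15$-ic curve in $\PP^{11}$. To do this, we eliminate variables from the incidence in $B\times B\times\PP^1$ over $\mathbb F_{17}$, or alternatively check that $[\eta]$ fails a determinantal equation vanishing on the secant variety, such as the $3\times 3$ minors of a suitable catalecticant-type multiplication matrix $H^0(L_1)\otimes H^0(L_2)\to H^0(L)$ with $L_1\otimes L_2=L$ and $\deg L_1,\deg L_2\geq 2g+1$ large enough.
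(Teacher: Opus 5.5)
Your proposal follows the paper's proof. For (1) the paper also gets $\mathrm{rk}(\mathrm{det}_V)\geq 10$ from injectivity of $H^0(B,A)\otimes W\rightarrow H^0(B,L)$, exhibits the annihilating form $\eta$, and notes that $\mathrm{det}_V(\widetilde{w}_0\wedge\widetilde{w}_1)\notin H^0(A)\otimes W$; for (2) it reduces, exactly as you do, to very ampleness of the $11$-dimensional system $\mathrm{Im}(\mathrm{det}_V)$ through which the Pl\"ucker composite of $\phi_V$ factors, and your Hilbert-polynomial or secant-variety test is one concrete way to carry out that check. One correction: $h^0(B,M\otimes A^{\vee})=2$, not $0$, so the base point free pencil trick only identifies the kernel of $H^0(A)\otimes H^0(M)\rightarrow H^0(L)$ with $\{u_0\otimes u_1s-u_1\otimes u_0s\}$, and injectivity on $H^0(A)\otimes W$ is a genuine condition on $W$ (no nonzero $s\in H^0(M\otimes A^{\vee})$ with $u_0s,u_1s\in W$), which your direct row reduction verifies anyway.
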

\begin{proof}
It is immediate to show via the Base Point Pencil trick that the multiplication map $H^0(B, A)\otimes W\rightarrow H^0(B,L)$ is injective, therefore $\mathrm{rk}(\mathrm{det}_V)\geq 10$. Furthermore, $\mbox{Im}(\mathrm{det}_V)$ lies in the hyperplane of $H^0(B,L)$ defined by the following element in $H^0(B,L)^{\vee}$
\begin{equation}\label{eq:eta1}
\eta:=4\ell_0^{\vee}+2\ell_1^{\vee}+15\ell_2^{\vee}+4\ell_3^{\vee}+6\ell_4^{\vee}+13\ell_5^{\vee}+14\ell_6^{\vee}+7\ell_7^{\vee}+6\ell_8^{\vee}+12\ell_9^{\vee}+
\ell_{10}^{\vee}+13\ell_{11}^{\vee}.
\end{equation}
Since also $\mathrm{det}_V\bigl(\widetilde{w}_0\wedge \widetilde{w}_1\bigr)\notin H^0(B,A)\otimes W$, it follows that necessarily $\mbox{rk}(\mathrm{det}_V)=11$. For the second statement, it is enough to observe that the linear system $\bigl|\mbox{Im}\bigl(\mathrm{det}_V\bigr)\bigr|$ is very ample on $B$. Since the map induced by this linear system factors through $\phi_V$, it follows that also $\phi_V$ is  an embedding.
\end{proof}

We now show that the triple $(B,F, V)$ we just constructed is transverse. As before, $\GG=G(2,V^{\vee})$:

\begin{proposition}\label{prop:transv4}
For each $b\in B(\overline{\kk})$, one has $\mathbb T_b\bigl(\GG\bigr)\cap \Lambda=\mathbb T_b(B)$.
\end{proposition}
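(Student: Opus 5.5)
The plan is to reformulate the tangency statement as the surjectivity of the conormal morphism $\epsilon\colon \mathrm{Ker}(\mathrm{det}_V)\otimes \OO_B\rightarrow N_{B/\GG}^{\vee}(1)$ from Definition \ref{def:transv}. We then verify this surjectivity by an explicit computation over $\kk=\mathbb F_{17}$, checking all points $b\in B(\overline{\kk})$ at once by elimination.

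For $b\in B$ we set $U_b:=F_{|b}^{\vee}\subseteq V^{\vee}$, a $2$-plane, so that $\phi_V(b)=[U_b]$. The embedded tangent space $\mathbb T_b(\GG)\subseteq \PP(\bigwedge^2 V^{\vee})$ is the projectivization of the $11$-dimensional space $U_b\wedge V^{\vee}$. Equivalently, it is cut out by the linear forms $\omega\in \bigwedge^2 V$ whose restriction to $U_b\wedge V^{\vee}$ vanishes, that is, by the $2$-forms $\omega$ with $\iota_{U_b}\omega=0$ in $V$. The space $\Lambda$ is cut out by the $10$ forms $\omega_1,\ldots,\omega_{10}$ spanning $\mathrm{Ker}(\mathrm{det}_V)$.

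We first compute this kernel explicitly from the matrix $(c_{ij,a})$ and the vectors $w_0,\ldots,w_4$; it is the $10$-dimensional kernel of the $21\times 12$ matrix of $\mathrm{det}_V$ in the bases $(v_i\wedge v_j)$ and $(\ell_a)$. Since $\Lambda\subseteq \PP(\bigwedge^2 V^\vee)$ is $10$-dimensional, $\mathbb T_b(\GG)$ is $10$-dimensional, and $\mathbb T_b(B)$ is a line, the desired equality $\mathbb T_b(\GG)\cap\Lambda=\mathbb T_b(B)$ holds if and only if $\mathbb T_b(\GG)$ and $\Lambda$ span a hyperplane-free subspace of codimension exactly $10-1-10+10=\ldots$. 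Concretely, the condition is that the linear map
\[
\mathrm{Ker}(\mathrm{det}_V)\longrightarrow \mathrm{Hom}\bigl(U_b\wedge V^{\vee}/\langle \phi_V(b), \phi_V'(b)\rangle,\kk\bigr),\qquad \omega\mapsto \omega_{|U_b\wedge V^\vee},
\]
whose target is $9$-dimensional, has rank $9$. In local terms, we write $U_b$ via a local frame: on each affine chart of $\PP^1\times\PP^1$ we express the two rows of the $2\times 7$ matrix whose columns are $v_0,\ldots,v_6$ evaluated at $b$. These are the sections $u_0,u_1$ and the $F$-components of the $\widetilde{w}_i$, obtained from the Plücker coordinates $d_{ij}$. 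The restriction then becomes a $10\times 11$ matrix $N(b)$ with entries polynomial in the coordinates of $b$, and it always has rank at most $9$ because $\omega_k$ vanishes on $\phi_V(b)$ and $\phi_V'(b)$.

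The verification then reduces to showing that the ideal generated by the $9\times 9$ minors of $N(b)$, together with the equation of $B$, has empty zero locus in each chart. We do this by a Gröbner basis computation in Macaulay2 over $\mathbb F_{17}$, checking that the saturated ideal contains $1$, or equivalently that the degeneracy locus of $\epsilon$ is empty. Doing this over $\overline{\kk}$ is automatic, since emptiness of a scheme defined over $\kk$ is detected by the ideal computation.

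Finally, semicontinuity lets us lift the transversality to characteristic zero: the conditions of Propositions \ref{prop:transv3} and \ref{prop:transv4} are open, so they hold for a lift of $(B,F,V)$ to a number field and hence over $\mathbb C$.

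The main obstacle is the size of the elimination. The minors of a $10\times 11$ matrix with polynomial entries are expensive to compute, so we would first try to lower the rank condition by choosing, chart by chart, a $2\times 2$ normal form of $U_b$ in which two columns are the identity. We then use the explicit extension structure, with $h_0,h_1$ spanning $A$, to make $N(b)$ block triangular, and test the rank of the remaining small block instead.
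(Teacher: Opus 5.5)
Your proposal is correct and is essentially the paper's argument. Both reduce the statement to surjectivity of the conormal map $\epsilon$, i.e.\ to $\dim\bigl(\widehat{\Lambda}\cap\widehat{\mathbb T}_b(\GG)\bigr)=2$ for every $b$, and verify this rank-$9$ condition by a computer check over $\mathbb F_{17}$; your garbled sentence about the span should just say that $\mathbb T_b(\GG)$ and $\Lambda$ span exactly a hyperplane, but the concrete condition you state after it is right.

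The only difference is dual bookkeeping: the paper applies the Jacobian of the $35$ Pl\"ucker quadrics to an $11$-column basis of the cone over $\Lambda$, giving $R=J_{\GG}\cdot T$ with entries in $H^0(B,L)$, so no charts or local frames of $F$ are needed and three explicit $9\times 9$ minors already have no common zero on $B$, whereas you restrict the $10$ equations of $\Lambda$ to a local parametrization of $U_b\wedge V^{\vee}$.
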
 
\begin{proof}
As already explained, it suffices to show that the conormal morphism 
$$\epsilon\colon \mbox{Ker}(\mathrm{det}_V)\otimes \OO_B \longrightarrow N_{B/\GG}^{\vee}(1)$$ is surjective. We denote by $p_{ij}$ the Pl\"ucker coordinates on $\GG$ with respect to the basis $(v_0, \ldots, v_6)$ of $V$. The Grassmannian $\GG$ is cut out inside $\PP\bigl(\bigwedge^2 V^{\vee}\bigr)\cong \PP^{20}$ by the $35={7\choose 4}$ Pl\"ucker quadrics 
$q_{ijk\ell}=p_{ij}p_{k\ell}-p_{ik}p_{j\ell}+p_{i\ell}p_{jk}$,  for   $0\leq i<j<k<\ell\leq 6$. 
We form the $35\times 21$ Jacobian matrix $J_{\GG}$ of these quadrics. We denote furthermore by $T$ the $35\times 11$ matrix describing the inclusion $\mbox{Im}\bigl(\mathrm{det}_V^{\vee}\bigr)\hookrightarrow \bigwedge^2 V^{\vee}$ and we consider the product matrix
$$R:=J_{\GG}\cdot T\in M_{35,11}(\kk).$$
Since $B\subseteq \GG\cap \Lambda$, we have $\mbox{rk}\bigl(R(b)\bigr)\leq 9$. The proof is finished once it is established that $\mbox{rk}\bigl(R(b)\bigr)=9$, for all points $b\in B(\overline{\kk})$.

\vskip 4pt

To that end, we order the $35$ rows of $R$ lexicographically, thus starting with $0123$ and ending with $3456$. It turns out one needs to consider only three $9\times 9$ minors $R_1, R_2, R_3$ of $R$, which we write indexed by rows and columns as follows:
$$
R_1:=
\det
R[
0124,0125,0126,0135,0246,0256,1235,1345,1356;
\,0,3,4,5,6,7,8,9,10],
$$
$$
R_2:=
\det
R[
0146,0156,0234,0245,0246,1235,1236,2456,3456;
\,0,1,2,3,4,5,6,7,9],
$$
$$
R_3:=
\det
R[
0123,0126,0246,0356,1234,1235,1256,1345,1356;
\,0,1,3,4,5,6,7,8,10]
$$
One checks that for each $b\in B(\overline{\kk})$ at least one minor $R_i(b)\neq 0$, which shows that $\epsilon$ is a surjective morphism of sheaves and finishes the proof.
\end{proof}

Next we show that the constructed triple $(B,F,V)$ corresponds to a smooth point of the Hilbert scheme $\HH_1$ having locally the expected dimension $74$. This is an essential element in  the proof of Theorem \ref{thm:44}. Recall that  the map $\nu_B$ has been defined via (\ref{eq:nuB}).

\begin{proposition}\label{prop:transv8}
The map
$\nu_B\colon H^0(B,N_{B/\GG})\longrightarrow \mathrm{Hom}\bigl(\mathrm{Ker}(\mathrm{det}_V), \mathrm{Coker}(\mathrm{det}_V)\bigr)$ is surjective. It follows that $\HH_1$ is smooth of dimension $74$ around the point $[B\hookrightarrow \GG]$.
\end{proposition}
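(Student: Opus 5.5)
The target $\mathrm{Hom}\bigl(\mathrm{Ker}(\mathrm{det}_V),\mathrm{Coker}(\mathrm{det}_V)\bigr)$ has dimension $10\cdot 1=10$, since $\mathrm{rk}(\mathrm{det}_V)=11$ by Proposition \ref{prop:transv3} and $h^0(B,L)=12$. The source $H^0(B,N_{B/\GG})$ has dimension $84$. So I must show that the image of $\nu_B$ has rank $10$. Composing with the functional $\eta$ from (\ref{eq:eta1}), which spans $\mathrm{Coker}(\mathrm{det}_V)^{\vee}$, the map becomes
$$\xi\mapsto \bigl(s\mapsto \eta(\theta_{\xi}(1)(\overline{s}))\bigr)\in \mathrm{Ker}(\mathrm{det}_V)^{\vee}.$$
I plan to avoid computing all of $H^0(B,N_{B/\GG})$ and instead produce explicitly a $10$-dimensional family of normal vector fields whose images are linearly independent.

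The natural supply of sections comes from the surjection $H^0(B,T_{\GG|B})\twoheadrightarrow H^0(B,N_{B/\GG})$, which holds since $H^1(B,T_B)$ does not interfere: $H^1(T_{\GG|B})=0$ and the relevant part of the map is surjective. Global sections of $T_{\GG|B}\cong M_V^{\vee}\otimes F$ include the infinitesimal automorphisms $\mathfrak{gl}(V)$, but these move $\langle B\rangle$ by projective transformations. They preserve the rank of $\mathrm{det}_V$, so they map to zero under $\nu_B$ and are useless. Instead I will use first-order deformations of the pair $(F,V)$ with $B$ fixed, namely elements of $H^0(B,F)/V\otimes V^{\vee}$. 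A homomorphism $\psi\colon V\to H^0(B,F)/V$ deforms $V$ to $V_{\psi}$, and by the identifications of (\ref{eq:ident6}) one gets
$$\nu_B(\xi_{\psi})(s)=\eta\bigl(\mathrm{det}_F(\psi\wedge\mathrm{id})(s)\bigr),$$
where for $s=\sum a_{ij}v_i\wedge v_j\in \mathrm{Ker}(\mathrm{det}_V)$ this means $\sum a_{ij}\,\eta\bigl(d(\psi(v_i)\wedge v_j+v_i\wedge \psi(v_j))\bigr)$. I will also add the deformations of $F$ in $H^1(\mathrm{End}(F))$ if needed, but I expect the $\psi$'s to suffice, since $\dim \mathrm{Hom}(V,H^0(F)/V)=14$.

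The concrete step is therefore a linear algebra computation over $\kk=\mathbb F_{17}$:
\begin{enumerate}
\item Compute a basis of the $10$-dimensional space $\mathrm{Ker}(\mathrm{det}_V)\subseteq \bigwedge^2 V$ from the explicit matrix $(c_{ij,a})$ and the vectors $w_0,\ldots,w_4$.
\item For the $14$ elementary homomorphisms $\psi$ sending one $v_i$ to $h_7$ or $h_8$ (a complement of $V$ in $H$), form the $14\times 10$ matrix of values $\eta\bigl(d(\ldots)\bigr)$.
\item Check that this matrix has rank $10$.
\end{enumerate}
Since these tangent vectors lie in the image of $H^0(N_{B/\GG})$ (the curve $B\subseteq G(2,V_{\psi}^{\vee})$ transported back by an identification $V_{\psi}\cong V$), the surjectivity of $\nu_B$ follows. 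Then Proposition \ref{prop:transv5} gives smoothness of $\HH_1$ of dimension $84-10=74$ at $[B\hookrightarrow\GG]$.

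The main obstacle is justifying rigorously that the formula above computes $\nu_B$ on the normal vector induced by $\psi$. This requires tracking the identification of $\theta_{\xi}$ with the derivative of the Plücker sections $d_{ij}$ along the deformation, and checking that the gauge ambiguity (the choice of identification $V_{\psi}\cong V$) only changes the value by an element of $\mathrm{Im}(\mathrm{det}_V)$, which $\eta$ kills. If the rank turns out smaller than $10$, I would supplement the family with deformations of the extension class of $F$ in (\ref{eq:extF}) and of $B$ itself inside $\PP^1\times\PP^1$. Finally, the rank statement over $\mathbb F_{17}$ lifts to characteristic zero by semicontinuity, as in the rest of this section.
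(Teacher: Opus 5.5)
Your method is the paper's in kind: exhibit explicit vectors in $H^0(B,N_{B/\GG})$ and check over $\mathbb F_{17}$ that their images under $\nu_B$ span the $10$-dimensional target. The difference is the choice of vectors.

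\emph{The paper's choice versus yours.} The paper takes:
\begin{itemize}
\item seven directions $\xi_1,\dots,\xi_7$ that move the extension class of $F$ in $\mathrm{Ext}^1(M,A)$ together with the lifts of the $m_i$ (the space $\mathcal{Y}$);
\item a Schiffer variation $\xi_9$ that moves $B$ in $\cM_4$;
\item only two directions of your type, $\xi_8$ ($v_2\mapsto h_7$) and $\xi_{10}$ ($v_0\mapsto h_2$).
\end{itemize}
You keep $B$ and $F$ fixed and use all of $\mathrm{Hom}(V,H^0(B,F)/V)$.

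\emph{What is right and what is wrong.} Your formula $\nu_B(\xi_\psi)(v\wedge w)=\chi(\psi v,w)+\chi(v,\psi w)$, with $\chi:=\eta\circ\mathrm{det}_F$, is correct. The gauge issue is void because $\chi$ vanishes on $\bigwedge^2V$. Also $\langle h_7,h_8\rangle$ really is a complement of $V$, since the coefficient matrix of $w_0,\dots,w_4$ on $m_0,\dots,m_4$ is invertible over $\mathbb F_{17}$.

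Your justification of the supply of sections, however, is wrong. Since $H^0(B,T_B)=0$ and $H^1(B,T_{\GG|B})=0$, the map $H^0(B,T_{\GG|B})\to H^0(B,N_{B/\GG})$ is injective with cokernel $H^1(B,T_B)$ of dimension $9$ (the dimensions are $75$ and $84$). It is $\partial\colon H^0(B,N_{B/\GG})\to H^1(B,T_B)$ that is onto, and this is why the paper needs $\xi_9$.

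\emph{The real obstacle.} Restricting to the $\psi$'s tests a stronger property, and one can say exactly which. Put $R:=\{v\in V:\chi(v,h)=0 \text{ for all } h\in H^0(B,F)\}$. Since $V$ is $\chi$-isotropic of codimension $2$, $\dim R\geq 5$.
\begin{itemize}
\item Each form $\nu_B(\xi_\psi)$ vanishes on $\mathrm{Ker}(\mathrm{det}_V)\cap\bigwedge^2R$.
\item When $\dim R=5$, the forms $v\wedge w\mapsto\chi(\psi v,w)+\chi(v,\psi w)$ fill up $(\bigwedge^2R)^{\perp}\subseteq\bigwedge^2V^{\vee}$.
\end{itemize}
Hence your $14\times 10$ matrix has rank $10$ if and only if $\mathrm{det}_F$ is injective on $\bigwedge^2R$. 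Equivalently, the locus $\{V'\in G(7,H^0(B,F)):\mathrm{rk}(\mathrm{det}_{V'})\leq 11\}$ is smooth of its expected dimension $4$ at $V$.

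This is transversality inside the fibre of $\HH_1$ over the fixed pair $(B,F)$. It implies the proposition, but not conversely. Here $(B,F)$ is very special: $B$ is hyperelliptic, and $F$ is an extension of $M$ by $A$ with $H^0(A)\subseteq V$. Nothing in your argument excludes an excess-dimensional fibre at $V$ while $\HH_1$ is nevertheless smooth of dimension $74$. So the real obstacle is this rank condition, not the routine identification of $\nu_B(\xi_\psi)$.

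\emph{If the check fails or succeeds.} If it fails, you must leave the fibre. Deforming the extension class is what the paper does with $\mathcal{Y}$. Moving $B$ inside $\PP^1\times\PP^1$, by contrast, keeps $B$ hyperelliptic. It therefore only reaches the tangent space of the hyperelliptic locus, not a general direction of $H^1(B,T_B)$ such as a Schiffer variation at a general point. If the check succeeds, your route is lighter, since only the given matrix $(c_{ij,a})$ enters, and it yields the stronger fibrewise statement. The paper's mixed choice of directions is the more robust one at such a special point.
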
 
\begin{proof}
Recall that the Hilbert scheme $\HH$ is smooth of dimension $84$ at the point $[B\hookrightarrow \GG]$, whereas $\mbox{dim}\bigl(\mathrm{Ker}(\mathrm{det}_V)\bigr)=10$ and $\mbox{dim}\bigl(\mathrm{Coker}(\mathrm{det}_V)\bigr)=1$. Recalling that the form $\eta$ has been defined in (\ref{eq:eta1}) as the annihilator of the hyperplane $\mbox{Im}(\mathrm{det}_V)\subseteq H^0(B,L)$, we shall use the identification $\eta\colon \mathrm{Coker}(\mathrm{det}_V)\stackrel{\cong}\longrightarrow \kk$.  We choose an explicit $10$-dimensional subspace of tangent directions to the Hilbert scheme $$U=\langle \xi_1, \ldots, \xi_{10}\rangle \subseteq H^0(B, N_{B/\GG})\cong T_{[B\hookrightarrow \GG]}(\HH),$$
then show that the $10\times 10$ matrix of the restriction map $\nu_{B|U}$ is nonsingular. 

\vskip 4pt

The first seven tangent directions $\xi_1, \ldots, \xi_7$ we choose correspond to general deformations of the vector bundle $F$ sitting in an extension (\ref{eq:extF}) and of lifts $\widetilde{m}_i\in H^0(B,F)$ of the sections $m_i$ forming a basis of $H^0(B,M)$. Precisely, we introduce the affine parameter space of such tuples, hat is,
$$\mathcal{Y}:=\Bigl\{\bigl(F, \chi\colon H^0(B,M)\otimes \OO_B\rightarrow F\bigr): F\in \mathrm{Ext}^1(M,A), \ \pi_M\circ \chi=\mathrm{ev}_M\Bigr\},$$
where $\mathrm{ev}_M\colon H^0(B,M)\otimes \OO_B\rightarrow M$ is the evaluation map. Since $\mbox{dim } \mathrm{Ext}^1(M,A)=8$, it follows that $\mbox{dim}(\mathcal{Y})=22$. In concrete terms, $\mathcal{Y}$ is the subspace of the vector space $\bigwedge^2 H^0(B,M)\otimes H^0(B,L)\cong H^0(B,L)^{\oplus 21}$ consisting of tuples $$\gamma=(\gamma_{ij})_{0\leq i<j\leq 6}\in H^0(B,L)^{\oplus 21}$$ satisfying the relations  
$(y_a m_i)\cdot \gamma_{jk}-(y_a m_j)\cdot \gamma_{ik}+(y_a m_k)\cdot \gamma_{ij}=0\in H^0(B, L^2)$, for all triples  $0\leq i<j<k\leq 6$ and $a=0,1$. 

\vskip 4pt

There is a natural map $\mathcal{Y}\rightarrow \mbox{Quot}_B(V\otimes \OO_B)$ and an induced tangent map 
\begin{equation}\label{eq:diff_4}
T\bigl(\mathcal{Y}\bigr) \longrightarrow T_{[V\otimes \OO_B\twoheadrightarrow F]}\mathrm{Quot}_B(V\otimes \OO_B)=H^0(B, M_V^{\vee}\otimes F)\cong H^0(B, T_{\GG|B})
\end{equation}
%Concretely, for $\gamma=(\gamma_{ij})_{i<j}$, we associate the alternating form $d_{\gamma}\colon \bigwedge^2 H\rightarrow H^0(B,L)$ defined on the canonical basis $(h_0, \ldots, h_8)$ by setting 
%$$d_{\gamma}(h_0\wedge h_1)=d(h_0\wedge h_{i+2})=d(h_1\wedge h_{i+2})=0, \mbox{ for } i=0, \ldots, 6,$$ and $d_{\gamma}(h_{i+2}\wedge h_{j+2})=\gamma_{ij}$. 

We choose  $\tau_1, \ldots, \tau_7\in H^0(B, T_{\GG|B})$ tangent directions corresponding to images of random choices in $T\bigl(\mathcal{Y}\bigr)$ under the map (\ref{eq:diff_4}) and then $\xi, \ldots, \xi_7\in H^0(B, N_{B/\GG})$ are the corresponding tangent vectors to the Hilbert scheme $\mathfrak{H}$.  

\vskip 4pt

The tangent direction $\xi_8$ corresponds to deforming $W\in G\bigl(5, H^0(B,M)\bigr)$ and is defined as follows. We have $H^0(B,T_{\GG/B})\cong \mbox{Hom}(M_V,F)\subseteq \mbox{Hom}(V\otimes \OO_B,F)$ and we consider the morphism of sheaves $\tau_8\colon M_V\rightarrow F$ as a restriction to $M_V$ of the morphism of sheaves $V\otimes \OO_F\rightarrow F$ defined by assigning $\tau_8(v_2)=h_7$ and $\tau_8(v_j)= 0\in H^0(B,F)$, for $j\neq 2$. Then $\xi_8\in H^0(B, N_{B/\GG})$ is the image of $\tau_8$ under the natural surjection $H^0(B,T_{\GG|B})\twoheadrightarrow H^0(B,N_{B/\GG})$.

\vskip 3pt

Then $\xi_9\in H^0(B,N_{B/\GG})$ corresponds to varying the curve $[B]\in \cM_4$ in its moduli space along a Schiffer variation based at the point $p=(3,4)\in B(\kk)$. Precisely, under the surjection $\partial\colon H^0(B, N_{B/\GG})\rightarrow H^1(B,T_B)\cong H^0(B,\omega_B^2)^{\vee}$ describing the tangent map of the forgetful morphism $\HH \dashrightarrow \cM_4$, we have that $\partial(\xi_9)\colon H^0(B,\omega_B^2)\rightarrow \overline{\kk}$ is the evaluation map at the point $p$. Finally, the vector $\xi_{10}$ is constructed as the image of the morphism $\tau_{10}\colon M_V\rightarrow F$ obtained by setting $\tau_{10}(h_0)= h_2\in H^0(B,F)$ and $\tau_{10}(h_1)=0$ and $\tau_{10}(\widetilde{w}_i)=0$, for $i=0, \ldots, 4$.   

\vskip 4pt

We now check directly that the $10\times 10$ matrix having columns $\nu_B(\xi_1), \ldots, \nu_B(\xi_{10})$ is non-singular, which finishes the proof.

\end{proof}

\subsection{The surjectivity of the twisted Petri map} Via Propositions \ref{prop:transv8} and \ref{prop:transv5} we established that the variety $\cZ$ introduced in Definition \ref{def:d2} has a component of dimension $44$ containing a point $t=(C,E,x+y+z,\tau)$ corresponding via a linkage construction to a smooth triple $(B,F,V)$. To finish the proof of Theorem \ref{thm:bnr2} we show that the same component of $\cZ$ contains an isolated point in its fibre of the forgetful map $\cZ\rightarrow \cM_{16}$. 

\vskip 4pt

We retain all the choices and the notation used in the construction of the triple $(B,F,V)$. We choose a suitable $11$-dimensional subspace $\PP_0$
$$\Lambda=\PP\bigl(\mathrm{Im}(\mathrm{det}_V^{\vee})\bigr)\subseteq \PP_0\subseteq \PP\bigl(\bigwedge^2 V^{\vee}\bigr),$$
and then define a genus $16$ curve $C$ as the curve residual to $B$ in the intersection $\PP_0\cap \GG$.

\vskip 4pt

The vector space $V\cong H^0(B,F)$ has the distinguished basis $(v_0, \ldots, v_6)$ and we denote by $\bigl(e_0:=v_0\wedge v_1, e_1:=v_0\wedge v_2,  \ldots, e_{20}=v_5\wedge v_6\bigr)$ the induced basis of $\bigwedge^2 V$, where the ordered is lexicographical. Note that $e_{11}=v_2\wedge v_3$ is the first element of the basis not involving $v_0$ or $v_1$. The elements $\mathrm{det}_V(e_1), \ldots, \mathrm{det}_V(e_{11})$ form a basis of $\mbox{Im}(\mathrm{det}_V)$, in particular for $j=12, \ldots, 20$, we can write 
$$\mathrm{det}_V(e_j)=\lambda_{j,0}\cdot \mathrm{det}_V(e_1)+\cdots \lambda_{j,10}\cdot \mathrm{det}_V(e_{11}), \ \mbox{ for } \ \ \lambda_{j,a}\in \kk.$$ In particular, for $j=12, \ldots, 20$, we have
$$ k_j:=e_j-\lambda_{j,0}\cdot e_1-\cdots-\lambda_{j,10}\cdot e_{11}\in \mbox{Ker}(\mathrm{det}_V),$$ and in fact it is not hard to show that $\bigl(e_0, k_{12}, \ldots, k_{20}\bigr)$ is a basis of $\mbox{Ker}(\mathrm{det}_V)$. Each $k_j$ corresponds to a hyperplane in $\PP\bigl(\bigwedge^2 V^{\vee}\bigr)$ containing $\Lambda$. We now define the following hyperplane $P_0^{\perp}$ in $\mbox{Ker}(\mathrm{det}_V)$ 
$$
\bigl\langle k_{12}+7e_0, k_{13}+15e_0, k_{14}+4e_0, k_{15}+10e_0, k_{16}+
\\9e_0,k_{17}+11e_0,  k_{18}+6e_0,k_{19}+10e_0, k_{20}+6e_0\bigr\rangle.
$$

The intersection $\PP_0$ of the hyperplanes in $P_0^{\perp}$ is an $11$-dimensional subspace of $\PP\bigl(\bigwedge^2V^{\vee}\bigr)$ containing $\Lambda$. Note that the curve $C$ defined by the relation $$\PP_0\cdot \GG=B+C,$$ is smooth, has genus $16$  and comes equipped with the vector bundle $F_C:=\cQ_{|C}$. Then $\mbox{det}(F_C)\cong \omega_{C}(-D)$, where $D=x+y+z$ is the reduced length $3$ subscheme defined by the relation $\Lambda\cdot \GG=B+D$. Since $\langle C\rangle=\PP_0$ is $11$-dimensional, the determinant map 
$$\mbox{det}_{F_C}\colon \bigwedge^2 V \longrightarrow H^0\bigl(C, \omega_{C}(-D)\bigr)$$
has corank one, in particular, $[C, F, D]$ lies in the subvariety $\cZ_1\subseteq \cZ$ introduced in Definition \ref{def:Z12}. We identify $H^0(C, F_C)\cong V$ and  $\mbox{det}_{F_C}$ with the projection  
$\bigwedge^2 V\rightarrow \frac{\bigwedge^2 V}{P_0^{\perp}}$.

\begin{proposition}\label{prop:transv9}
The following holds:
\begin{enumerate}
\item The vector bundle $F_C$ is stable and $h^0(C,F_C)=7$.
\item The twisted Petri map $\mu_{F_C,D}$ has rank $42$.
\end{enumerate}
\end{proposition}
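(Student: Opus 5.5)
The plan is to verify both statements on the explicit example over $\kk=\mathbb F_{17}$ constructed above, and then lift to characteristic zero by semicontinuity, as was done for Propositions \ref{prop:transv3}, \ref{prop:transv4} and \ref{prop:transv8}. Concretely, I would compute the ideal of $\PP_0\cdot\GG$ from the Pl\"ucker quadrics and the nine linear forms spanning $P_0^{\perp}$. I would then saturate it with respect to the ideal of $B$, which yields the ideal of the residual curve $C$. Both stated facts are meant to be checked by rank and emptiness computations, each an open condition, so they persist on the generic fibre of a lift of $(B,F,V,\PP_0)$ to characteristic zero.

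For (1), I would first establish $h^0(C,F_C)=7$. By Riemann--Roch, $\chi(C,F_C)=27+2(1-16)=-3$, so the claim is equivalent to $h^1(C,F_C)=10$, or dually to $h^0(C,F_C(D))=10$, using $F_C(D)\cong F_C^{\vee}\otimes\omega_C$. The inclusion $V\hookrightarrow H^0(C,F_C)$ holds because $\langle C\rangle=\PP_0$ and $\phi_{F_C}$ is the inclusion $C\subseteq\GG$. For the reverse inequality, I would compute $H^0(C,\cQ_{|C})$ directly as a graded module computation on $C$; alternatively, one can use the Koszul-type resolution of $\OO_C$ on $X$ together with the vanishing of $H^1(\GG,\cQ\otimes\I_{C/\GG})$, checked by computer.

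Stability will then follow from Lemma \ref{lemma:resonance}, once I show $\cR(F_C)=\emptyset$. Since $\mathrm{Ker}(\mathrm{det}_{F_C})=P_0^{\perp}$ under the identification $H^0(C,F_C)\cong V$, this amounts to showing that the $8$-dimensional projective space $\PP(P_0^{\perp})\subseteq\PP(\bigwedge^2V)$ meets $G(2,V)$ nowhere over $\overline{\kk}$. This is the expected behaviour, since $\dim G(2,7)=10$ and $\PP(P_0^{\perp})$ has codimension $12$ in $\PP^{20}$. I would verify it by restricting the $35$ Pl\"ucker quadrics to $\PP(P_0^{\perp})\cong\PP^8$ and checking via a Gr\"obner basis that the resulting ideal is irrelevant.

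For (2), I would first record the dimensions. The source of $\mu_{F_C,D}$ has dimension $7\cdot 10-21=49$. For the target, $h^0(C,\mathrm{End}(F_C)\otimes\omega_C)=120-60+h^0(\mathrm{End}(F_C))=61$ by stability, and $h^0(\omega_C(-D))=13$, so the target has dimension $61-13=48$. The rank computation is then explicit linear algebra. I would take $V$ as a basis of $H^0(C,F_C)$ and compute a basis of $H^0(C,F_C(D))$, for instance as $\mathrm{Hom}(F_C,\omega_C)$, realized through the dual of the quotient $V^{\vee}\otimes\OO_C\to\cQ^{\vee}_{|C}$ twisted by $\omega_C$. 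I would evaluate the Petri products in $H^0(C,\mathrm{End}(F_C)\otimes\omega_C)$ at sufficiently many points of $C$ (over an extension of $\kk$) to represent the map faithfully, and reduce modulo the image of $H^0(\omega_C(-D))$. The check is then that the resulting $49\times 48$ matrix has rank $42$.

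The main obstacle I expect is this last step: producing workable bases for $H^0(C,F_C(D))$ and for the quotient target. In particular, the reduction modulo $H^0(\omega_C(-D))$ requires tracking the trace part carefully. I would first try to avoid computing $H^0(\mathrm{End}(F_C)\otimes\omega_C)$ at all, by evaluating at $61$ general points, or at enough points to separate sections of degree $120$. I would also cross-check the rank against the dimension count of Proposition \ref{prop:twisted_bn}, so that the cokernel is of the size predicted for an isolated point in the fibre of $\cZ\to\cM_{16}$.
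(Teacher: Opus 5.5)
Your plan is essentially the paper's proof. Stability comes from Lemma \ref{lemma:resonance}, by checking $\PP(P_0^{\perp})\cap G(2,V)=\emptyset$ for the explicit $P_0^{\perp}$, and the rank is a direct linear-algebra computation on the $\mathbb F_{17}$ example. The only difference is how the bases are obtained: the paper uses the degree $27$ cover $f\colon C\to\PP^1$ defined by $\langle z_0,z_1\rangle$, the splitting $f_*F_C\cong\OO_{\PP^1}^{\oplus 7}\oplus\OO_{\PP^1}(-1)^{\oplus 37}\oplus\OO_{\PP^1}(-2)^{\oplus 10}$ (which also gives $h^0(C,F_C)=7$), and relative duality $H^0(C,F_C^{\vee}\otimes\omega_C)\cong\mathrm{Hom}_S\bigl(\Gamma_C(F_C),S(-2)\bigr)_0$, where you evaluate at points.

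Two corrections. First, rank $42$ leaves a $6$-dimensional cokernel, so this is \emph{not} the isolated-point situation; that needs rank $48$, which the paper only reaches after deforming $C$ out of $\cZ_1$ via $\Theta_{\zeta}$. Second, only $\mathrm{rk}\geq 42$, not $\mathrm{rk}=42$, is an open condition surviving a lift to characteristic zero; no lift is needed anyway, since the statement concerns the explicit $\mathbb F_{17}$ example.
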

\begin{proof}
For the first part we use Lemma \ref{lemma:resonance} and we show that the vector bundle $F_C$ has trivial resonance. Letting $\PP_0^{\perp}$ be the projectivization of $P_0^{\perp}$, this amounts to  
$$\PP_0^{\perp}\cap G\bigl(2, \bigwedge^2 V\bigr)=\emptyset,$$ which follows by inspection from our choice of $P_0^{\perp}$. 

\vskip 4pt

For the second part, we recall the identification $H^0(C, F_C)\cong \langle v_0, \ldots, v_6\rangle$. Setting $L_C=\omega_{C}(-x-y-z)$,  let $z_j:=\mathrm{det}_{F_C}(e_j)\in H^0(C, L_C)$ for $j=0, \ldots, 11$. To study the map $\mu_{F_C, D}$, we first get to grips with the usual Petri map 
$$\mathfrak{pe}_{F_C}\colon H^0(C, F_C)\otimes H^0(C, F_C^{\vee}\otimes \omega_{C})\longrightarrow H^0\bigl(C, \mathrm{End}(F_C)\otimes \omega_{C}\bigr).$$
To get bases for the spaces appearing in this map, we consider the degree $27$ cover $f\colon C\rightarrow \PP^1$, given by  $\langle z_0, z_1\rangle \subseteq H^0(C, L_C)$. Denoting by $S:=\kk[s,t]:=\Gamma(\PP^1, \OO_{\PP^1}(1))$ the homogeneous coordinate ring of $\PP^1$, we consider the homogeneous coordinate ring 
$$\Gamma_C(L_C):=\bigoplus_{n\geq 0} H^0(C,L_C^n)$$ viewed as graded $S$-module via the cover $f$. Then we show by Riemann-Roch that 
$$\Gamma_C(L_C)\cong S \oplus S(-1)^{\oplus 11}\oplus S(-2)^{\oplus 14}\oplus S(-3).$$ 
Similarly we form the graded $S$-module $\Gamma_C(F_C)=\bigoplus_{n\geq 0} H^0(C, F_C\otimes L_C^n)$. Since one has $f_*F_C\cong \OO_{\PP^1}^{\oplus 7}\oplus \OO_{\PP^1}(-1)^{\oplus 37}\oplus \OO_{\PP^1}(-2)^{10}$,  we then have 
$$\Gamma_C(F_C)\cong S^{\oplus 7}\oplus S(-1)^{\oplus 37}\oplus S(-2)^{\oplus 10},$$ 
and we choose a  set of generators of $\Gamma_C(F_C)$ compatible with this decomposition and given by $v_0, \ldots, v_6\in H^0(C,F_C)$,  the basis we started with,  $f_0,\ldots, f_{36}\in H^0(C,F_C(1))$ and $g_0,\ldots, g_{9}\in H^0(C,F_C(2))$. 

\vskip 4pt

We then have the identification $H^0(C, F_C^{\vee}\otimes \omega_C)\cong \mathrm{Hom}_S\bigl(\Gamma_C(F_C), S(-2))_0\cong \kk^{10}$. We choose an explicit basis $(w_0, \ldots, w_9)$ of $H^0(C,F_C^{\vee}\otimes \omega_C)$ by setting $$w_i\colon \Gamma_C(F_C)\longrightarrow S(-2)$$
 defined by $w_i(g_a)=\delta_{ia}$, for $i,a=0, \ldots, 9$  and $v_0, \ldots, v_6, f_0, \ldots, f_{36}\in \mbox{Ker}(w_i)$.

\vskip 3pt

We pass from $\mathfrak{pe}_{F_C}$ to $\mu_{F_C,D}$ and a direct calculation shows that $\mbox{rk}\bigl(\mu_{F_C,D}\bigr)=42$.
\end{proof}

The reason why the curve $C$ obtained via a linkage construction from a genus $4$ curve could not be used to show that the twisted Petri map $\mu_{F_C,D}$ has maximal rank was that $\mbox{dim}\langle C\rangle=11$. When we move $C\subseteq \GG$ in such a way that its Pl\"ucker span becomes $12$-dimensional, it will turn out that this phenomenon disappears:  

\vskip 5pt

\noindent \emph{End of the proof of Theorem \ref{thm:bnr2}.} Recall that we have constructed the point  $[C\hookrightarrow \GG]\in \mathfrak{H}_1$ corresponding to a smooth curve of genus $16$ and we retain the notation from Proposition \ref{prop:transv9}. We take the embedding $C\stackrel{|L_C|}\longrightarrow \PP^{12}$ and consider the homogeneous coordinate ring $\Gamma_{C}(L_C)$. Setting $R:=\mbox{Sym } H^0(C,L_C)$, we have the following minimal resolution of $\Gamma_C(L_C)$ as a graded $R$-module
$$\cdots\longrightarrow R(-3)^{\oplus 287}\longrightarrow R(-2)^{\oplus 52}\longrightarrow S\longrightarrow \Gamma_C(L_C)\longrightarrow 0.$$
 Moreover, the ideal $I_{C/\PP^{12}}$ is generated by $52$ quadrics $Q_1, \ldots, Q_{52}$, of which $Q_1, \ldots,  Q_{35}$ are the restrictions to $\PP_0$ of the Pl\"ucker quadrics defining the Grassmannian $\GG$. It can be computed with \emph{Macaulay} that $L_C$ satisfies the Green-Lazarsfeld property $(N_3)$. For every quadric $Q\in I_2(C,L_C)$ and for every tangent vector $\zeta\in H^0(C, N_{C/\PP^{12}})$, we denote by $\zeta(Q)\in H^0(C,L_C^2)$ the corresponding element under the morphism  
$$I_2(L_C)\otimes \OO_C \twoheadrightarrow N_{C/\PP^{12}}^{\vee}\otimes L_C^2.$$ 

\vskip 4pt

To obtain a suitable deformation of $C$, we deform the ideal $I_{C/\PP^{12}}$, or equivalently we choose a tangent direction $\zeta\in H^0(N_{C/\PP^{12}})$ such that whenever we have a syzygy 
$\sum_{i=1}^{52} \ell_i\otimes Q_i\in \mathrm{Ker}\bigl\{ H^0(C,L_C)\otimes I_2(L_C)\rightarrow I_3(L_C)\bigr\}$, then  necessarily 
$$\sum_{i=1}^{52} \ell_i\cdot \zeta(Q_i)=0\in H^0(C, L_C^3).$$ 
Such a $\zeta$ does give rise to a deformation $C_t\subseteq \PP^{12}\subseteq \GG$ (because of the compatibility with the Pl\"ucker relation, the deformation $C_t$ stays in $\GG$) and to an induced map
\begin{equation}\label{eq:diff_normal}
\Theta_{\zeta}\colon \mathrm{Ker}\bigl(\mu_{F_C,D}\bigr)\longrightarrow \mathrm{Coker}\bigl(\mu_{F_C,D}\bigr).
\end{equation}

Proposition \ref{prop:transv9} yields $\mbox{dim}\bigl(\mathrm{Ker}(\mu_{F_C,D})\bigr)=7$ and $\mbox{dim}\bigl(\mathrm{Coker}(\mu_{F_C,D})\bigr)=6$.
By a random search, we find one example of $\zeta$ such that the map $\Theta_{\zeta}$ has rank $6$, shows that on the deformation $C_t\subseteq \GG$, denoting by $F_t=\cQ_{C_t}$ we have $\mu_{F_t, D_t}$ has maximal rank equal to $48$, which proves that the forgetful map $\mathfrak{H}\rightarrow \mm_{16}$ fills-up a divisor, which finishes the proof. 
\hfill $\Box$

\section{The Proof of Theorem \ref{main_thm}}
In this final Section we complete the proof that the canonical divisor $K_{\mm_{16}}$ is not pseudoeffective. We do this in two parts and first we determine the intersection of the non-abelian Brill-Noether divisor $\dd2$ with the test curve $\Gamma_2$ constructed in the course of proving Theorem \ref{thm:bnr2}.

\vskip 4pt

We begin with a few preliminaries. We start with a general curve $[C]\in \dd2$ and with a very ample vector bundle $F$ on $C$, with $\mbox{det}(F)\cong \omega_C(-D)$, where $D=x+y+z$ is a reduced divisor such that the canonical surface $X=\Pi \cdot \GG$ defined by (\ref{eq:X}) is smooth. We set $V:=H^0(C,F)$, thus $\GG=G(2,V^{\vee})$. The pair $(C, F)$ corresponds to a point in $\cZ\setminus \cZ_1$, in particular the map $\mbox{det}\colon \bigwedge^2 V\rightarrow H^0\bigl(C, \omega_C(-x-y-z)\bigr)$ is surjective. We may also assume that the twisted Petri map $\mu_{F,D}$ defined by (\ref{eq:twisted}) is surjective, which immediately implies that also the usual Petri map 
$$\mathfrak{pe}_F\colon H^0(C,F)\otimes H^0(C, F^{\vee}\otimes \omega_C)\longrightarrow H^0\bigl(C, \mbox{End}(F)\otimes \omega_C\bigr)$$  is surjective. 
Note that $h^0(C, F^{\vee}\otimes \omega_C)=10$.

\vskip 4pt

Let $\mathfrak{M}$ be the local moduli space classifying polarized surfaces $\bigl(X', \OO_{X'}(C')\bigr)$, where  $\omega_{X'}^2=42$, $(C')^2=3$ and $C'\cdot \omega_{X'}=27$. Precisely, $\mathfrak{M}$ is the subvariety of the deformation space $\mathrm{Def}(X)$ parametrizing deformations $X'$ of $X$ such that $c_1\bigl(\OO_X(C)\bigr)$ remains a class of type $(1,1)$. In order to describe the tangent space of $\mathfrak{M}$, let $\mathfrak{A}_{C}$ be the \emph{Atiyah bundle} sitting in an extension $0\rightarrow \OO_X\rightarrow \mathfrak{A}_C\rightarrow T_X\rightarrow 0$, corresponding to the extension class $c_1\bigl(\OO_X(C)\bigr)\in H^1(X,\Omega_X^1)\cong \mbox{Ext}^1(T_X,\OO_X)$ and consider the induced cohomology exact sequence
$$0\longrightarrow H^1(X, \mathfrak{A}_C)\longrightarrow H^1(X,T_X)\xrightarrow{\qquad \cup \ c_1(\OO_X(C)) \qquad}  H^2(X,\OO_X).$$
Then, we have the following well-known description of the tangent space:
\begin{equation}\label{eq:kuranishi}
T_{[X, \OO_X(C)]}\bigl(\mathfrak{M}\bigr)\cong \mbox{Ker}\bigl\{\cup\ c_1\bigl(\OO_X(C)\bigr)\colon H^1(X,T_X)\longrightarrow H^2(X,\OO_X)\bigr\}.
\end{equation}

Note that $h^1(X,T_X)=10\chi(X,\OO_X)-2\omega_X^2=56$ and $h^2(X,\OO_X)=13$. Let $\mathfrak{B}$ be the parameter space of pairs $(X,C)$, where $[X, \OO_X(C)]\in \mathfrak{M}$ and $C\in |\OO_X(C)|$. We have  morphisms
$$\xymatrix@R=2pc@C=3pc{
\mathfrak{B}\ar[d]_{q}  \ar[r]^{\mathfrak{m}}& \mm_{16}, \\
\mathfrak{M} \\
                 }
$$    
where the forgetful map $q$ is a $\PP^1$-bundle and $\mathfrak{m}\bigl([X,C]\bigr):=[C]$ is the moduli map. With this notation, if $R:=q^{-1}\bigl([X, \OO_X(C)]\bigr)\subseteq \mathfrak{B}$, then  $\Gamma_2:=\mathfrak{m}_*(R)$. 

\vskip 3pt

For $(X,C)$ as above, we denote by $T_X(-\mathrm{log}\ C)$ the vector bundle of vector fields on $X$ tangent to $C$. The first order deformations of the pair $(X,C)$ are classified by $H^1\bigl(X,T_X(-\mathrm{log}\ C)\bigr)$, whereas the obstructions lie in $H^2\bigl(X,T_X(-\mathrm{log}\ C)\bigr)$, see \cite[\S1]{B}. More generally, if $C'\subseteq X$ is a nodal curve, then the deformations are controlled by the hypercohomology of the complex $[T_X\rightarrow N_{C/X}]$.  In particular, we have an exact sequence describing the differential of the map $\mathfrak{m}$, for every nodal curve $C\in \bigl|\OO_X(C)\bigr|$:
\begin{equation}\label{eq:def_mu}
H^1(X, T_X(-C))\longrightarrow T_{[X,C]}(\mathfrak{B})\stackrel{d \mathfrak{m}}\longrightarrow T_{[C]}(\mm_{16})\longrightarrow H^2(X,T_X(-C)).
\end{equation}

\begin{proposition}\label{prop:calculations}
For $(C,F,X)$ as above, the following statements hold:
\begin{enumerate}
\item $h^0(X, T_{\GG|X})=48,\ \ h^1(X, T_{\GG|X})=0\  \mbox{ and }  \ h^2(X, T_{\GG|X})=8$.
\item $H^1(X,T_X(-C))=0$.
\item $h^1(X, T_X(-\mathrm{log}\ C))=44$.
\end{enumerate}
\end{proposition}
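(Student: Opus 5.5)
The plan is to get all three statements from the normal sequence
$$0\longrightarrow T_X\longrightarrow T_{\GG|X}\longrightarrow N_{X/\GG}\cong \OO_X(1)^{\oplus 8}\longrightarrow 0,$$
the Koszul resolution of $\OO_X$ on $\GG$, Bott vanishing on $\GG=G(2,7)$, and the surjectivity of the Petri map $\mathfrak{pe}_F$. The numerical invariants of $X$ are $K_X=H$, $K_X^2=42$, $\chi(\OO_X)=14$ and $c_2(X)=12\cdot 14-42=126$.

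\emph{Part (1).} I resolve $\OO_X$ by the Koszul complex $\bigwedge^i\bigl(\OO_{\GG}(-1)^{\oplus 8}\bigr)$, $0\leq i\leq 8$, tensor with $T_{\GG}$, and read off $H^j(X,T_{\GG|X})$ from the groups $H^{j+i}\bigl(\GG, T_{\GG}(-i)\bigr)^{\oplus \binom{8}{i}}$. By Bott's theorem, the only non-zero groups are:
\begin{itemize}
\item $H^0(\GG,T_{\GG})=\mathfrak{sl}(V)$, of dimension $48$;
\item $H^9\bigl(\GG,T_{\GG}(-7)\bigr)=H^9(\GG,T_{\GG}\otimes K_{\GG})\cong H^1(\GG,\Omega^1_{\GG})^{\vee}\cong\mathbb C$, which occurs $\binom{8}{7}=8$ times and contributes to $H^{9-7}=H^2$.
\end{itemize}
The cases $i=8$ and intermediate twists have to be checked to vanish; this is the routine but careful step. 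As a consistency check, Riemann--Roch gives $\chi(T_X)=\frac{7K_X^2-5c_2}{6}=-56$ and $\chi(\OO_X(1)^{\oplus 8})=8\cdot 14=112$, so $\chi(T_{\GG|X})=56=48-0+8$.

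\emph{Part (2).} Twisting the normal sequence by $-C$ gives
$$H^0\bigl(\OO_X(H-C)\bigr)^{\oplus 8}\longrightarrow H^1\bigl(T_X(-C)\bigr)\longrightarrow H^1\bigl(T_{\GG|X}(-C)\bigr).$$
\begin{itemize}
\item The left term vanishes: $H-C\equiv B$, and since $[C]\in\cZ\setminus\cZ_1$ the class $B$ is not effective.
\item The right term: from $0\rightarrow T_{\GG|X}(-C)\rightarrow T_{\GG|X}\rightarrow T_{\GG|C}\rightarrow 0$ and $H^1(X,T_{\GG|X})=0$ by part (1), it equals the cokernel of the restriction $\mathfrak{sl}(V)\rightarrow H^0(C,T_{\GG|C})=H^0(C,M_V^{\vee}\otimes F)$.
\item To compute that target, I tensor $0\rightarrow M_V\rightarrow V\otimes\OO_C\rightarrow F\rightarrow 0$ (dualized) with $F$:
$$0\rightarrow F^{\vee}\otimes F\rightarrow V^{\vee}\otimes F\rightarrow M_V^{\vee}\otimes F\rightarrow 0.$$
We have $H^0(\mathrm{End}\,F)=\mathbb C$ by stability and $h^0(V^{\vee}\otimes F)=49$. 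The coboundary $H^1(\mathrm{End}\,F)\rightarrow V^{\vee}\otimes H^1(F)$ is dual to $\mathfrak{pe}_F$, so it is injective because $\mathfrak{pe}_F$ is surjective. Hence $h^0(M_V^{\vee}\otimes F)=48$.
\item The restriction $\mathfrak{sl}(V)\rightarrow H^0(C,M_V^{\vee}\otimes F)$ is injective, since an endomorphism fixing $C$ pointwise in $\GG$ is scalar, $C$ spanning $\PP^{12}$ and being non-degenerate. Both sides have dimension $48$, so it is an isomorphism and the cokernel is zero.
\end{itemize}
This gives $H^1(T_X(-C))=0$.

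\emph{Part (3).} I use $0\rightarrow T_X(-\mathrm{log}\ C)\rightarrow T_X\rightarrow N_{C/X}=\OO_C(D)\rightarrow 0$ together with $H^0(T_X)=0$, which holds since $X$ is of general type. Since $h^0(\OO_C(D))=1$ and $h^1(\OO_C(D))=h^0\bigl(\omega_C(-D)\bigr)=13$, we get
$$0\rightarrow \mathbb C\rightarrow H^1\bigl(T_X(-\mathrm{log}\ C)\bigr)\rightarrow H^1(T_X)\stackrel{r}\longrightarrow H^1(\OO_C(D)).$$
Here $h^1(T_X)=56$, which also follows from part (1), because $H^1(T_X)\cong H^0(\OO_X(1))^{\oplus 8}/\mathfrak{sl}(V)$ has dimension $104-48$. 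So $h^1(T_X(-\mathrm{log}\ C))=1+56-\mathrm{rk}(r)$, and the claim $44$ is equivalent to $r$ being surjective.

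To prove surjectivity of $r$, I factor it through the sequence $0\rightarrow\OO_X\rightarrow\OO_X(C)\rightarrow\OO_C(D)\rightarrow 0$:
\begin{itemize}
\item $H^2(\OO_X(C))=H^0(\OO_X(B))^{\vee}=0$, and I would show $H^1(\OO_X(C))=0$. Then $H^1(\OO_C(D))\cong H^2(\OO_X)$, both of dimension $13$.
\item Under this identification $r$ becomes the cup product with $c_1(\OO_X(C))$ from (\ref{eq:kuranishi}).
\item Dually, $r$ is surjective if and only if multiplication by $c_1(C)$, $H^0(K_X)\rightarrow H^1\bigl(\Omega^1_X(K_X)\bigr)$, is injective.
\end{itemize}
This is the main obstacle. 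I would attack it through the description $H^1(T_X)=H^0(\OO_X(1))^{\oplus 8}/\mathfrak{sl}(V)$: $r$ is then induced by moving the $12$-plane $\Pi$ inside $\PP^{20}$ and recording the resulting first-order motion of $D=\Pi\cap\Lambda$-residual data on $C$, i.e. the map $H^0(\OO_X(1))^{\oplus 8}\rightarrow H^1(\OO_C(D))$. Its surjectivity is an open condition, so I would verify it on the explicit example of Proposition \ref{prop:transv9} and its deformation, by the same Macaulay computation used there. If this direct route fails, the fallback is to compare with $H^1(T_X(-\mathrm{log}\ C))\hookrightarrow H^1(T_C)$, which is injective by part (2), and to show the image has codimension one.
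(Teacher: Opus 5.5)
Your parts (1) and (2) follow the paper's argument. For (1) this is the Koszul complex plus Bott vanishing. For (2) it is the normal sequence twisted by $-C$, the vanishing $h^0(\OO_X(H-C))=0$, and the isomorphism $H^0(X,T_{\GG|X})\cong H^0(C,T_{\GG|C})$ obtained from the surjectivity of $\mathfrak{pe}_F$. One small correction: the injectivity of $\mathfrak{sl}(V)\to H^0(C,M_V^{\vee}\otimes F)$ comes from your own exact sequence, i.e.\ from $H^0(C,\mathrm{End}\,F)=\mathbb C$, not from the span of $C$.

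The genuine gap is in (3). Your reduction is correct and is the paper's: $h^1(X,T_X(-\mathrm{log}\ C))=1+\dim\mathrm{Ker}(r)\geq 44$, and by (2) there is an injection $H^1(X,T_X(-\mathrm{log}\ C))\hookrightarrow H^1(C,T_C)$, so the value is $44$ or $45$. But excluding $45$, i.e.\ proving $r$ is surjective, is the whole content of (3), and you leave it open.

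The proposed Macaulay check does not fit your own reduction. The explicit curve of Proposition~\ref{prop:transv9} lies in $\cZ_1$, so $B=H-C$ is effective on $X$. Hence $h^1(\OO_X(C))=h^2(\OO_X(C))=h^0(\OO_X(B))=1$, and the identification $H^1(\OO_C(D))\cong H^2(\OO_X)$ that turns $r$ into $\cup\, c_1(\OO_X(C))$ fails there. Moreover, the paper's deformation of that example is only a first-order vector $\zeta$, not an actual pair $(X_t,C_t)$ on which you could compute. Your fallback, showing that the image in $H^1(C,T_C)$ has codimension one, just restates the claim.

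The missing idea is the one the paper uses, and it needs no computation: $\overline{\mathfrak{D}}_{\mathrm{rk}2}$ is already known to be a divisor (Theorem~\ref{thm:44} and the end of the proof of Theorem~\ref{thm:bnr2}).

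\begin{itemize}
\item Under $T_{[X,C]}(\mathfrak B)\cong H^1(X,T_X(-\mathrm{log}\ C))$, your injection into $H^1(C,T_C)\cong T_{[C]}(\mm_{16})$ is the differential of the moduli map $\mathfrak m\colon \mathfrak B\to\mm_{16}$.
\item By (1), the map $H^0(\OO_X(1))^{\oplus 8}\to H^1(X,T_X)$ is onto, so deformations $X'$ of $X$ are again linear sections of $\GG$.
\item Therefore every $C'$ in the pencil $|\OO_{X'}(C')|$ carries $\cQ_{|C'}$. This is a rank $2$ bundle with at least seven sections and determinant $\omega_{C'}(-D')$, where $D'$ is the degree $3$ base locus of the pencil.
\item Hence $\mathfrak m(\mathfrak B)\subseteq\overline{\mathfrak{D}}_{\mathrm{rk}2}$, so the injective $d\mathfrak m$ cannot be an isomorphism onto the $45$-dimensional tangent space at a general pair.
\end{itemize}

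This rules out $45$ and gives $h^1(X,T_X(-\mathrm{log}\ C))=44$.
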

\begin{proof}
We denote by $W:=\mbox{Ker}(\mathrm{det})\subseteq H^0(\GG,\OO_G(1))$ the $8$-dimensional system of hyperplanes containing $X$, then write down the Koszul resolution of  $X\subseteq \GG$:
$$0\longrightarrow \bigwedge^8 W\otimes \OO_{\GG}(-8)\longrightarrow \cdots \longrightarrow \bigwedge^2 W\otimes \OO_{\GG}(-2)\longrightarrow W\otimes \OO_{\GG}(-1)\longrightarrow \OO_X\longrightarrow 0.$$ 
After twisting this complex with $T_{\GG}\cong \mbox{Hom}(\mathcal{S},\cQ)$, where $\mathcal{S}$ and $\cQ$ are the universal sub- and quotient bundles on $\GG$, the cohomology groups $H^{q-p}(X, T_{\GG|X})$ are computed by the hypercohomology spectral sequence of the Koszul complex having terms $E_1^{-p,q}=\bigwedge^p W\otimes H^q\bigl(\GG,T_{\GG}(-p)\bigr)$. These are computed via the Bott-Weyl theorem and the only non-zero terms are 
$$E_1^{0,0}\cong \frac{\mathrm{Hom}(V,V)}{\mathbb C\cdot \mathrm{id}_V}\cong \mathfrak{sl}(V) \ \mbox{ and } E_1^{-7,9}\cong \bigwedge^7 W\otimes \bigl(\mathrm{det}(V^{\vee})\bigr)^{2},$$
where $H^9\bigl(\GG, T_{\GG}(-7)\bigr)\cong H^1(\GG, \Omega_{\GG}^1)^{\vee}$.
The first statement now follows and we have  
$$H^0(X, T_{\GG|X}) \cong \mathfrak{sl}(V) \ \ \  \mbox{ and } \ \ \  H^2(X, T_{\GG|X})\cong \bigwedge^7 W\otimes \bigl(\mathrm{det}(V^{\vee})\bigr)^{2}.$$
In particular, $h^0(X,T_{\GG|X})=48$ and $h^2(X,T_{\GG|X})=8$.

\vskip 4pt

We introduce the kernel bundle of $F$ via the following exact sequence on $C$
\begin{equation}\label{eq:kernel-bundle-F}
0\longrightarrow M_F\longrightarrow H^0(C,F)\otimes \OO_C\longrightarrow F\longrightarrow 0
\end{equation}
and we have the identification $T_{\GG|C}\cong M_F^{\vee}\otimes F$. 
Apply $\mathcal{H}om(-, F)$ to the sequence (\ref{eq:kernel-bundle-F}), take cohomology, then use that $F$ is stable, thus also simple, to find the exact sequence
$$0\longrightarrow \mathfrak{sl}(V)\longrightarrow H^0(C, T_{\GG|C})\longrightarrow \bigl(\mathrm{Coker}(\mathfrak{pe}_F)\bigr)^{\vee}\longrightarrow 0.$$
Since the Petri map $\mathfrak{pe}_F$ is surjective, we obtain $h^0(C,T_{\GG|C})=\mbox{dim } \mathfrak{sl}(V)=48$, in particular we also have  $H^0(X, T_{\GG|X})\stackrel{\cong}\longrightarrow  H^0(C,T_{\GG|C})$, therefore $H^1(X, T_{\GG|X}(-C))=0$.

\vskip 3pt

From the exact sequence $0\rightarrow T_X\rightarrow T_{\GG|X}\rightarrow \OO_X(1)^{\oplus 8}\rightarrow 0$, after twisting with $\OO_X(-C)$ and recalling that $h^0(X,\omega_X(-C))=h^1(X, \omega_X(-C))=0$, we obtain $H^1(X, T_X(-C))= 0$. 

\vskip 4pt

To deal with statement (iii), we take cohomology in the following exact sequence 
$$0\longrightarrow T_X(-C) \longrightarrow T_X(-\mathrm{log}\ C)\longrightarrow T_C \longrightarrow 0.$$
Since $H^1(X, T_X(-C))=0$, we have $H^1\bigl(X, T_X(-\mathrm{log}\ C)\bigr)\hookrightarrow H^1(C,T_C)$. Furthermore, from the exact sequence $0\rightarrow T_X(-\mathrm{log}\ C)\rightarrow T_X\rightarrow N_{C/X}\rightarrow 0$ we obtain
$$h^1\bigl(X, T_X(-\mathrm{log}\ C)\bigr)=1+\mathrm{dim}\ \mathrm{Ker}\bigl\{H^1(X,T_X)\longrightarrow H^1(N_{C/X})\bigr\}.$$
In particular $44\leq h^1\bigl(X, T_X(-\mathrm{log}\ C)\bigr)\leq 45$. The possibility $h^1\bigl(X, T_X(-\mathrm{log}\ C)\bigr)=45$ can be ruled out via the identification $T_{[X,C]}(\mathfrak{B})\cong H^1\bigl(X, T_X(-\mathrm{log}\ C)\bigr)$ since we have already established that $\dd2$ is a divisor in $\mm_{16}$.
\end{proof}

\begin{remark}\label{rmk:constant}
The properties $H^1(X,T_X(-C))=0$ and $h^1\bigl(X,T_X(-\mathrm{log}\ C)\bigr)=44$ which were established in Propsoition \ref{prop:calculations} for \emph{a} smooth curve $C\in \bigl|\OO_X(C)\bigr|$, obviously propagate to all curves in this linear system, that is, throughout the curve $\Gamma_2\subseteq \mm_{16}$.
\end{remark}

\begin{proposition}\label{prop:minus1} One has $\Gamma_2\cdot \dd2\geq -1$.
\end{proposition}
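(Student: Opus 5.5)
The inequality $\Gamma_2\cdot \dd2\geq -1$ should come from the local structure of $\dd2$ along $\Gamma_2$. Since $\Gamma_2\subseteq \dd2$, there is no naive positivity. Instead, I would express $\Gamma_2\cdot \dd2$ as the degree of the normal bundle $N_{\dd2/\mm_{16}}$ restricted to $\Gamma_2$, on the open set where $\dd2$ is smooth. This degree should then be compared with the degree of the conormal data attached to the family $\mathfrak{B}\rightarrow \mm_{16}$.

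\textbf{Step 1: the image of $d\mathfrak{m}$.} For every nodal member $C_t$ of the pencil $|\OO_X(C)|$, the sequence (\ref{eq:def_mu}) together with $H^1(X,T_X(-C_t))=0$ shows that $d\mathfrak{m}$ is injective on $T_{[X,C_t]}(\mathfrak{B})\cong H^1(X,T_X(-\mathrm{log}\ C_t))$. This space is $44$-dimensional by Proposition \ref{prop:calculations} and Remark \ref{rmk:constant}. Since $\mathfrak{m}(\mathfrak{B})$ is dense in $\dd2$ by Theorem \ref{thm:bnr2}, the image of $d\mathfrak{m}$ is the tangent space to $\dd2$ at $[C_t]$, and it is a hyperplane in the $45$-dimensional space $T_{[C_t]}(\mm_{16})=H^1(C_t,T_{C_t})$.

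\textbf{Step 2: a line bundle surjection.} Globalising over $R\cong \PP^1$, I get an exact sequence of bundles on $R$:
\[
0\longrightarrow \mathcal{H}^1\bigl(T_X(-\mathrm{log}\ C_t)\bigr)\longrightarrow \mathfrak{m}^*T_{\mm_{16}}\longrightarrow \mathcal{L}\longrightarrow 0.
\]
Here $\mathcal{L}$ is a line bundle of degree $\Gamma_2\cdot\dd2$, away from the points where $\Gamma_2$ meets $\Delta_0$ or where the orbifold structure intervenes. Via (\ref{eq:def_mu}) and the vanishing of $H^1(X,T_X(-C_t))$, the quotient $\mathcal{L}$ injects into the bundle with fibres $H^2(X,T_X(-C_t))$. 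Dually, $\mathcal{L}^{\vee}$ is the image of a map
\[
H^0\bigl(X,\Omega_X(K_X+C_t)\bigr)\longrightarrow H^0\bigl(C_t,\omega_{C_t}^{2}\bigr).
\]
On the parameter line these spaces vary as $H^0(X,\Omega_X\otimes\omega_X)\otimes \OO(1)$. The twist by $\OO(1)$ is because $C_t$ moves in a pencil, so the section space of $\OO_X(C_t)$ twists by $\OO_{\PP^1}(1)$.

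\textbf{Step 3: the degree bound.} A surjection from a trivial bundle twisted by $\OO(1)$ onto $\mathcal{L}^{\vee}$ forces $\deg \mathcal{L}^{\vee}\leq 1$, that is, $\deg\mathcal{L}\geq -1$. This gives $\Gamma_2\cdot \dd2\geq -1$, after checking that the boundary contributions at the $189$ nodal fibres are nonnegative. At a nodal fibre, $T_{\mm_{16}}$ acquires the log-twist along $\Delta_0$, and $\dd2$ meets $\Delta_0$ properly, so these contributions should be nonnegative.

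\textbf{Main obstacle.} The hard step is making the identification of $\mathcal{L}^{\vee}$ with a quotient of $H^0(X,\Omega_X(K_X))\otimes\OO_{\PP^1}(1)$ rigorous and uniform over all of $R$, including the nodal members. I would first verify this on the smooth locus using the hypercohomology of $[T_X\rightarrow N_{C_t/X}]$. I would then check by a local computation that at the nodes the map from the relative log-tangent sheaf to $\mathfrak{m}^*T_{\mm_{16}}(-\mathrm{log}\,\Delta_0)$ remains of constant corank one, so that no negative correction to the degree arises there.
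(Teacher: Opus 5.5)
Your Steps 1 and 2 set up the right object. The sequence $0\to T_{\mathfrak B}|_R\to \mathfrak m^*T_{\mm_{16}}\to \mathcal L\to 0$ is the paper's sequence, with $\mathcal L=\mathfrak m^*N_{\dd2/\mm_{16}}$ and $\Gamma_2\cdot\dd2\geq \deg\mathcal L$.

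\textbf{The gap is Step 3, where the inequality runs the wrong way.} The universal ideal sheaf of the pencil is $\OO_X(-C)\boxtimes\OO_{\PP^1}(-1)$. So the obstruction bundle is $H^2(X,T_X(-C))\otimes\OO_R(-1)$. Its dual has fibre $H^0(X,\Omega_X\otimes\omega_X(C))$, not $H^0(X,\Omega_X\otimes\omega_X)$. A nonzero map $\mathcal L\to\OO_R(-1)^{\oplus N}$, or equivalently a generically surjective map $\OO_R(1)^{\oplus N}\to\mathcal L^{\vee}$, forces $\deg\mathcal L^{\vee}\geq 1$. That is, it gives $\deg\mathcal L\leq -1$, an upper bound and not the lower bound you claim. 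This is a structural problem, not a sign slip: mapping $\mathcal L$ into a known bundle can only ever bound $\deg\mathcal L$ from above. No analysis of the nodal fibres can rescue the route.

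\textbf{The missing ingredient is an exact degree count.} It rests on $\Gamma_2\cdot K_{\mm_{16}}=-1$ from (\ref{eq:int3}), an input your argument never uses.
\begin{itemize}
\item First, $\deg\mathfrak m^*T_{\mm_{16}}=-\Gamma_2\cdot K_{\mm_{16}}=1$.
\item Second, $q$ is a smooth $\PP^1$-bundle along $R$. Hence $T_{\mathfrak B}|_R$ is an extension of $N_{R/\mathfrak B}\cong T_{[X,\OO_X(C)]}(\mathfrak M)\otimes\OO_R\cong\OO_R^{\oplus 43}$ by $T_R\cong\OO_R(2)$, so it has degree $2$.
\item Therefore $\deg\mathcal L=1-2=-1$. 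Equivalently, $\deg N_{R/\mm_{16}}=-2-\Gamma_2\cdot K_{\mm_{16}}=-1$.
\end{itemize}
This is how the paper concludes $\Gamma_2\cdot\dd2\geq-1$. The inequality absorbs possible further branches of $\dd2$ through $\Gamma_2$, as in Remark \ref{rmk:conductor}. No log-twist analysis at the nodal members is needed, because one works directly with $\mathfrak m^*T_{\mm_{16}}$, whose degree is read off from the canonical class.

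Your upper bound $\deg\mathcal L\leq -1$ is consistent with this exact value, but on its own it cannot yield the proposition.
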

\begin{proof}
We fix a general pair $[X,C]\in \mathfrak{B}$ as above and set $R=q^{-1}\bigl([X, \OO_X(C)]\bigr)\cong \PP^1$ and then $\Gamma_2=\mathfrak{m}_*(R)$. One has the exact sequence of normal bundles
$$0\longrightarrow N_{R/\mathfrak{B}}\longrightarrow N_{R/\mm_{16}}\longrightarrow \mathfrak{m}^*N_{\dd2/\mm_{16}}\longrightarrow 0.$$
The conditions singled out in Remark \ref{rmk:constant} hold along $R$, therefore $\mathfrak{B}$ is smooth of dimension $44$ along $R$ and via (\ref{eq:def_mu}) the differential of $\mathfrak{m}$ is injective. Since $q$ is a smooth morphism along $R$, we have $N_{R/\mathfrak{B}}\cong T_{[X, \OO_X(C)]}(\mathfrak{M})\otimes \OO_R\cong \OO_R^{\oplus 43}$. From the exact sequence $$0\longrightarrow T_R\longrightarrow \mathfrak{m}^*T_{\mm_{16}}\longrightarrow N_{R/\mm_{16}}\longrightarrow 0,$$ we compute via (\ref{eq:int3}) that $\mbox{deg}\bigl(N_{R/\mm_{16}}\bigr)=-2-\Gamma_2\cdot K_{\mm_{16}}=-2+1=-1$, in particular $\Gamma_2\cdot \dd2\geq \mbox{deg}\bigl(\mathfrak{m}^*N_{\dd2/\mm_{16}}\bigr)=-1$.
\end{proof}
 
\begin{remark}\label{rmk:conductor}
Likely one has $\Gamma_2\cdot \dd2=-1$. Proposition \ref{prop:minus1} leaves however open the  possibility that a second branch of the divisor $\dd2$ intersects the curve $\Gamma_2$ at finitely many points.
\end{remark} 

\begin{theorem}\label{thm:slope-ruleout}
One has $s(\dd2)>\frac{13}{2}$.
\end{theorem}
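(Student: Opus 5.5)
Since the slope of an effective divisor $D\equiv a\lambda-\sum_i b_i\delta_i$ is $a/b_0$, it suffices to find a moving curve (or a family of test curves) $\Sigma$ that is not contained in $\dd2$, misses the boundary divisors $\Delta_1,\ldots,\Delta_8$, and satisfies $\Sigma\cdot\delta_0/\Sigma\cdot\lambda\leq 13/2$, with strict inequality. Equivalently, we may show directly that $\dd2$ does not contain a general point of a curve sweeping out $\mm_{16}$ on which $13\lambda-2\delta_0$ is non-positive. The plan is to use the pencil $\Gamma_1=\Gamma$ of Theorem \ref{sweep}, which sweeps out $\Delta_0$ and satisfies $\Gamma\cdot(13\lambda-2\delta_0)=0$, together with a degeneration argument. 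First, we show that the general irreducible $1$-nodal curve of genus $16$, obtained by gluing two general points $x,y$ of a general $[C]\in\cM_{15}$, lies outside $\dd2$. Then $\Gamma\not\subseteq\dd2$, so $\Gamma\cdot\dd2\geq 0$. The real content is that the inequality must be strict, and this is what forces $s(\dd2)>13/2$. Writing $\dd2\equiv a\lambda-b_0\delta_0-\cdots$, we get
\[
\Gamma\cdot\dd2=22a-143b_0,
\]
and $s(\dd2)=13/2$ holds if and only if $\Gamma\cdot\dd2=0$.

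Step one is to describe $\dd2\cap\Delta_0$ via limits of the data $(E,\tau)$. On $[C/x\sim y]$, a limit of a bundle in $SU(2,\omega,9)$ is a torsion-free sheaf. After pulling back to the normalization, it becomes a rank $2$ bundle $E'$ on $C$ with $\det E'\cong\omega_C(x+y)$ and $h^0(E')\geq 9$, together with gluing data at $x$ and $y$. The trisecant condition of Proposition \ref{prop:d2=tris} then becomes non-surjectivity of evaluation at three points of the normalization. By a dimension count of the kind used in Proposition \ref{dims}, this locus is a proper subvariety of $\mm_{15,2}$. Hence $\Gamma\not\subseteq\dd2$.

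Step two is to prove $\Gamma\cdot\dd2>0$. Rather than exhibiting a specific point of $\Gamma\cap\dd2$, we argue by contradiction. If $\Gamma\cdot\dd2=0$, then since $\Gamma$ moves in a family covering $\Delta_0$ and through a general point of $\mm_{15,2}$, the divisor $\dd2$ would be disjoint from the general $\Gamma$. Its restriction to $\Delta_0$ would then have to be supported on the boundary of $\mm_{15,2}$ or on loci contracted by the pencils. We rule this out by producing one point of $\dd2$ on a general pencil $\Gamma$. Concretely, this means one curve in $|\I_{\{x,y\}}(C)|$ on the canonical surface $S$ of (\ref{cansurf}) whose glued curve carries a limit trisecant configuration. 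The expected number of such curves is a degeneracy class on the family $\widetilde S\to\PP^1$. We compute it as the degree of the rank $\le 2$ locus of the evaluation map $\mathfrak{ev}\colon\cA\to\cB$ of (\ref{eq:global_ev}), pulled back to the relative Quot scheme over the pencil, using Porteous' formula.

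The main obstacle is step two: computing, or even establishing positivity of, this degeneracy class over the nodal pencil. That requires controlling the limiting bundles $E'$ over each fibre, which is only feasible if the $11$ bundles extend as an étale cover over the pencil. I would try first to verify this étaleness via Teixidor-type Petri injectivity for $E'$ on the normalized curves. Failing that, I would replace $\Gamma$ by a pencil on a degenerate $S$, such as the double cover of the $K3$ surface $Y$ of Theorem \ref{prop:ample}, where a point of $\Gamma\cap\dd2$ can be exhibited explicitly.
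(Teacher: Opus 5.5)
Your reduction matches the decisive part of the paper's proof. Since $\Gamma$ passes through a general point of $\Delta_0$ and misses $\Delta_1,\ldots,\Delta_8$, one has $\Gamma\cdot\overline{\mathfrak{D}}_{\mathrm{rk}2}=22a-143b_0\geq 0$. Slope exactly $\frac{13}{2}$ would then make a general $\Gamma$ disjoint from $\overline{\mathfrak{D}}_{\mathrm{rk}2}$, so everything hinges on exhibiting one point of $\Gamma\cap\overline{\mathfrak{D}}_{\mathrm{rk}2}$.

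Two smaller remarks. Step one needs no limit-bundle analysis: an irreducible divisor different from $\Delta_0$ cannot contain a curve through a general point of $\Delta_0$. Also, your opening sentence has the inequality backwards: a curve $\Sigma$ not contained in the divisor gives $a/b_0\geq\Sigma\cdot\delta_0/\Sigma\cdot\lambda$, and $\Gamma$ has ratio exactly $\frac{13}{2}$, so it only gives the weak inequality.

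The genuine gap is step two, which is the entire content of the theorem and which you do not carry out. The Porteous route would not deliver it, for two reasons.
\begin{enumerate}
\item It presupposes control of $\mathcal{BN}$, $\mathcal{A}$, $\mathcal{B}$ over the nodal fibres, which you concede you lack.
\item Even granting that, it computes the degree on $\Gamma$ of the push-forward of the whole (virtual) degeneracy locus $\mathcal{Z}$. That total involves every component of $\mathfrak{D}_2$, together with possible excess contributions from degenerate boundary data (non-locally-free limits, support points running into the node). A positive answer only places a point of $\Gamma$ in that total locus, not in the component $\overline{\mathfrak{D}}_{\mathrm{rk}2}$, which is the one you need.
\end{enumerate}

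The missing idea is to write down directly a limiting $\mathfrak{D}_2$-datum on the nodal curve, which is what the paper does (tersely).
\begin{enumerate}
\item Choose $r$ among the seven further base points of $|\mathcal{I}_{\{x,y\}}(C)|$, so that $h^0(C,A(-x-y-r))\neq 0$, and put $L=\omega_C\otimes A^{\vee}$.
\item Descend $A$ and $L(-r)$ to line bundles $M,N$ on $Y=C/x\sim y$ with $h^0(Y,M)=2$ and $h^0(Y,N)=5$.
\item Take an extension $0\to M\to F\to N\to 0$ whose coboundary $H^0(Y,N)\to H^1(Y,M)$ vanishes. Then $h^0(Y,F)=7$.
\item Since $\nu^*\omega_Y\cong\omega_C(x+y)$, $\det F$ has degree $27$ and is a limit of $\omega(-D)$ with $\deg D=3$. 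So $F$ is a limit of the bundles defining the divisor, and $\Gamma\cap\overline{\mathfrak{D}}_{\mathrm{rk}2}\neq\emptyset$ with no enumerative input.
\end{enumerate}

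Before this, the paper also uses $\Gamma_2\cdot\overline{\mathfrak{D}}_{\mathrm{rk}2}\geq -1$ and [FP, Corollary 1.2] to pin the class down to $13\lambda-2\delta_0-\cdots$. The contradiction itself, however, comes from $\Gamma$ exactly as in your outline.

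Your fallback of a special pencil on a degenerate $S$ is too unspecific to assess. You would still have to name the intersection point and check that no component of the specialized pencil lies in $\overline{\mathfrak{D}}_{\mathrm{rk}2}$.
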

\begin{proof}
We write $[\dd2]=a\lambda-b_0\delta_0-\cdots -b_8\delta_8\in CH^1(\mm_{16})$. Assuming $\frac{a}{b_0}=\frac{13}{2}$, by applying \cite[Corollary 1.2]{FP}, we obtain $b_i\geq b_0$ for $i=1, \ldots, 8$, therefore $s(D)=\frac{a}{b_0}=\frac{13}{2}$. From Propositions \ref{prop:minus1} and \ref{prop:pencild2}, we obtain that $29a-189b_0\geq -1$. If  $29a-189b_0\geq 0$, then $s(\dd2)\geq \frac{189}{29}>\frac{13}{2}$, which is a contradiction. Thus, we are left with the case $29a-189b_0=-1$, in which case necessarily $a=13$ and $b_0=2$ and therefore $K_{\mm_{16}}-\dd2$ is a (possibly empty) effective combination of the boundary classes $\delta_1, \ldots, \delta_8$ and, in particular, $H^0\bigl(\mm_{16}, K_{\mm_{16}}\bigr)\neq 0$. 

\vskip 4pt

We assume by contradiction that $[\dd2]=13\lambda-2\delta_0-\sum_{i=1}^8 b_i\delta_i$. We  pick a general $2$-pointed curve $[C,x,y]\in \mm_{15,2}$, choose $A\in W^1_9(C)$ such that $h^0(C, A(-x-y))=1$ and construct the sweeping rational curve $\Gamma\subseteq \Delta_0$ from Theorem \ref{sweep} and which passes through the point $[Y:=C/x\sim y]$. Since $[C,x,y]$ has been chosen generically, clearly $[Y]\notin \dd2$. From Proposition \ref{numbers} we compute $\Gamma\cdot \dd2=0$, therefore $\Gamma\cap \dd2=\emptyset$. 

\vskip 4pt

However this conclusion is impossible. We choose a point $r\in C$ such that $h^0(C,A(-x-y-r))\neq 0$. Setting, as before, $L=\omega_C\otimes A^{\vee}\in W^6_{19}(C)$, we denote by $\nu\colon C\rightarrow Y$ the normalization map. We let $M$ and $N$ be the line bundles on $Y$, with $h^0(Y,M)=2$ and $h^0(Y,N)=5$ such that $\nu^*(M)\cong A$ and $\nu^*N\cong L(-r)$. We then consider a general  vector bundle $\widetilde{F}$ on $C$ lying in an extension  
$$0\longrightarrow M\longrightarrow F \longrightarrow N\longrightarrow 0,$$
such that $h^0(Y,F)=7$. Such an extension exists if and only if the coboundary map $H^0(Y,N)\rightarrow H^1(Y,M)$ is zero. Since $\nu^*(\omega_Y)\cong \omega_C(x+y)$, it follows that $F$ can be written as limit of vector bundles with special determinant of degree $27$ on smooth curves of genus $16$, in particular $\Gamma\cdot\dd2\neq \emptyset$, which finishes the proof.

\end{proof}

\end{document}